\renewcommand{\@secnumfont}{\bfseries}
\renewcommand{\section}{\@startsection{section}{1}%
  \z@{.7\linespacing\@plus\linespacing}{.5\linespacing}%
  {\normalfont\bfseries\centering}}
\theoremstyle{plain}
\newtheorem{Theorem}{Theorem}[section]
\newtheorem{Lemma}[Theorem]{Lemma}
\newtheorem{Proposition}[Theorem]{Proposition}
\newtheorem{Corollary}[Theorem]{Corollary}
\newtheorem{Remark}[Theorem]{Remark}
\newtheorem{Definition}[Theorem]{Definition}
\newtheorem{Example}[Theorem]{Example}
\newtheorem{Conjecture}[Theorem]{Conjecture}
\newcommand{\gon}{\hbox{\rm gon}}
\newcommand{\cli}{\hbox{\rm Cliff}}
\def\gon{\mbox{gon}}
\def\cli{\mbox{Cliff}}
\def\deg{\mbox{deg}}
\def\min{\mbox{min}}
\def\P{\mathbb P}
\def\Z{\mathbf Z}
\def\co{\mathcal O}
\def\cl{\mathcal L}
\def\ck{\mathcal K}
\def\cm{\mathcal M}
\def\ce{\mathcal E}
\def\cn{\mathcal N}
\def\Z{\mathbf Z}%
\def\gon{\hbox{\rm gon}}
\def\cli{\hbox{\rm Cliff}}
\def\gcd{\hbox{\rm gcd}}
\begin{document}



\title[ Explicit presentations of nonspecial line bundles  and  secant spaces ]
{%
 Explicit presentations of nonspecial line bundles  and  secant spaces} 


\author{Seonja Kim}
\address{Department of  Electronic Engineering,
Chungwoon University, Hongseong-gun, Chungnam, 350-701, Korea}
\email{sjkim@chungwoon.ac.kr}

\thanks{This research was supported by Basic Science Research Program through the National Research Foundation of Korea(NRF) funded by the Ministry of Education, Science and Technology (2011-0011224).}

\subjclass[2010]{14C20, 14E25, 14H30, 16E05}

\keywords{algebraic curve, nonspecial line bundle, nonspecial divisor, minimal presentation, $q$-very ample, order of very ampleness, secant space, multiple covering, gonality, property $(N_p)$, extremal line bundle.}
\begin{abstract}
A line bundle $\cl$ on a smooth curve $X$ is nonspecial if and only if   $\cl$  admits a   presentation $\cl\simeq\ck _X -D +E $  for  some divisors $D\ge 0,~ E>0$ on $X$  with $\gcd (D, E)=0$ and $h^0 (X ,\co _X (D))=1$.
In this work, we define a minimal presentation of $\cl$ which is minimal with respect to $\deg E$  among the presentations.
  If $\cl\simeq\ck _X -D +E $ with $\deg E \ge 3$ is  a minimal, then $\cl$ is very ample 
and any $q$-points of $\varphi _{\cl} (X)$ with $q \le \deg E -1$ are in  general position but the points of $\varphi _{\cl} (E)$ are not.
We investigate sufficient conditions on divisors $D,E$ for   $\cl\simeq\ck _X -D +E $ to be minimal. Through this,  for a number  $n $ in some range,  it is possible to construct a nonspecial very ample line bundle $\cl\simeq\ck _X -D +E $ on $X$  with/without an $n$-secant $(n-2)$-plane of the embedded curve by taking  divisors $D,E$ on $X$.
As its applications,
we  construct nonspecial line bundles  which show the sharpness of Green and Lazarsfeld's Conjecture on property  $(N_p)$ 
for general $n$-gonal curves and simple multiple coverings of smooth plane curves  .
\end{abstract}

\maketitle
\section{Introduction}

Throughout this paper, we mean a curve by a reduced irreducible
algebraic curve
 over an algebraically closed field of characteristic zero.
We  will investigate  properties of  nonspecial line bundles $\cl$ on a smooth  curve $X$ with respect to presentations such as  $\cl\simeq \ck _X -D +E $ by using the canonical line bundle $\ck _X$ and effective divisors $D,~ E$  on $X$ with $\gcd (D, E)=0$ and $h^0 (X ,\co _X (D))=1$.

To an  arbitrary pair of effective divisors  $ D$, $ E $   on  $X$  with $\gcd (D, E)=0$  we can associate  a   line bundle $\cl\simeq \ck _X -D +E $, which is nonspecial if  $h^0 (X ,\co _X (D))=1$ and $E>0$. 
Conversely, a nonspecial line bundle $\cl$ on  $X$ also admits an equivalence $\cl\simeq\ck _X -D +E $, which will be called a presentation of $\cl$, for some $D\ge 0,~ E>0$ with $\gcd (D, E)=0$ and $h^0 (X ,\co _X (D))=1$.  However,  a nonspecial line bundle may have several different presentations. Thus we  define  a minimal presentation(:minimal with respect to $\deg E$) as the most efficient one in some sense.


Assume that   $\cl$  is  minimally presented by $\ck _X -D +E $. If $\deg E\ge 3$, then $\cl$ is very ample and any  $q$-points of $\varphi _{\cl} (X)$ with $q \le \deg E -1$ are in  general position but the points of $\varphi _{\cl} (E)$ are not(see Proposition \ref{thm0}).  Accordingly,  nonspecial line bundles can be distinguished by their minimal presentations. Thus finding sufficient conditions for  minimality can be a major issue in this study.  In Section 3, we  explore some sufficient conditions for such presentations to be  minimal   on multiple coverings. Note that every smooth curve is a multiple covering of $\P ^1$.

Now, consider some  details of the brief outline above.
Let $X$ be a smooth curve of genus $g\ge 2$ and $\cl$ be a line bundle
on $X$. If $\cl$ is special(: $h^1 (X,\cl )>0)$, then its residual
line bundle $\ck _X \otimes \cl ^{-1}$ plays a role in investigating
the properties of $\cl$ or $X$ itself, since $\ck _X \otimes \cl
^{-1}$   has global sections and is associated to an effective
divisor.   On the other hand, if $\cl$ is nonspecial(: $h^1 (X,\cl
)=0)$  then the residual line bundle $\ck _X \otimes \cl ^{-1}$ has
no  global sections and hence
no corresponding effective divisors. Accordingly, it is a natural analyzing approach to
express $\ck _X \otimes \cl ^{-1}$ in terms of
effective divisors as follows.

Let $\cl$ be a nonspecial line bundle on a smooth curve $X$ of genus
$g$. Then, there exists a divisor $E>0$ such that
$$h^0 (X,\ck _X \otimes \cl ^{-1}
(E))=1,~~h^0 (X,\ck _X \otimes \cl ^{-1} (E'))=0  \mbox{ for } E' <E.$$
 Hence we have  an effective divisor $D$
 such that
 $$\ck _X \otimes \cl ^{-1}(E) \simeq \co _X  (D)
 \mbox{, equivalently, } \cl\simeq \ck _X(-D+E),$$
with $h^0 (X,\co _X  (D))=1$ and $\gcd (D,E)=0.$

 For an efficient description, we will use some
notations as the
following:\\
 \noindent{$(i)~ g^0_d$}: an effective divisor of degree $d$ with  $h^0 (X, \co _X
 (g^0_d))=1$,\\
 \noindent{$(ii)~(D, E)$}: the greatest common divisor of divisors $D$ and
 $E$,\\
\noindent{$(iii)~ \mathcal L-g^n_d$}: the line bundle $\mathcal
L(-D)$ where $|D|= g^n_d$.

Using these, a nonspecial line bundle
$\cl$ can be written as
 $$\cl\simeq \ck _X -g^0_d+E$$
  for some divisors $g^0_d$ and $E >0$  on $X$ with  $ (g^0_d ,E)=0$.
Conversely, if  $\cl\simeq \ck _X -g^0_d+E$ for such $g^0_d$ and $E>0$ then $\cl$ is  nonspecial.

\begin{Definition} Let $\cl$ be a nonspecial line bundle on $X$.
(1)  If $\cl \simeq \ck _X -g^0_d +E$ with $ (g^0_d, E )=0$ and $E>0$ then the equivalence is called  a presentation of  type $(d,e)$, where $e:=\deg E$.
 (2) $\cl \simeq \ck _X -g^0_d +E$
  is said to be minimal if any  presentation $\cl \simeq \ck _X -g^0_t+F$ satisfies $\deg F\ge \deg E$. (3) A presentation of type $(0,e)$, i.e., $\cl \simeq \ck _X  +E$ is said to be trivial.
\end{Definition}

Assume that  a nonspecial line bundle $\cl$ is presented by $\ck _X -g^0_d+E$. Then we have the
equality  $h^0 (X,\cl )-h^0 (X,\cl (-E ) ) = \deg E-1$.
 Accordingly,  $\cl$ is
 not globally generated if $\deg E =1$, and  $\cl$ is not very ample if $\deg E=2$.
 Hence  it is a natural question whether a nonspecial line bundle $\cl\simeq\ck _X -g^0_d+E_3$  with $\deg E_3 
=3$ is very ample or not.
 If  the presentation $\ck _X -g^0_d+E_3$  is not minimal, equavelently, $\cl$ has
another presentation $\cl\simeq \ck _X -g^0_{\le d-1}+E'$ with $\deg
 E ' \le 2$, then $\cl$ is not very ample.

Likewise, a given nonspecial line bundle $\cl$  may admit various presentions. Here,  the degrees of $g^0_d$ and $ E$ as well as the divisors $g^0_d$ and $E$  depend on
presentations of $\cl$.
However, a special line bundle $\cl$ can be written as $\ck _X -D$ for  $D \in |~\ck_X \otimes \cl ^{-1}~|$
 which is  unique up to linear equivalence.
 Thus we would be naturally interested in a
minimal presentation and its uniqueness.

Assume that   a  nonspecial line bundle $\cl$   is  minimally presented by  $ \ck _X -g^0_d +E$ with $\deg E \ge 3$.  Then, by the Riemann-Roch Theorem $\cl$ is very ample and the embedded curve $\varphi_{\cl} (X)$ admits a $\deg E$-secant  $(\deg E -2)$-plane  $\langle E \rangle _{\cl}$ but does not admit $n$-secant $(n-2)$-planes for any $n\le \deg E-1$.
Here,  $ \langle E \rangle _{\cl}:=\cap \{ H \in H^0 (\mathbb P, \co_{\mathbb P} (1))~| ~ H.\varphi _{\cl} (X) \ge E \}$,  $\P  : =\P H^0 (X, \cl)^*  $.
 Moreover, if  $\cl \simeq \ck _X -g^0_d +E$ is a unique minimal presentation then $\langle E \rangle _{\cl}$ is a unique $\deg E$-secant  $(\deg E -2)$-plane of $\varphi_{\cl} (X)$.

Now, observe the case that  a nonspecial line bundle $\cl$ is trivially presented by $\ck _X  +E$. This $\cl\simeq \ck _X  +E$    is in itself  a minimal presentation and the
family of such presentations of $\cl$ corresponds to the linear
system $|E|$.
Note that the minimal presentations of  nonspecial line bundles $\cl$ with 
 $\deg \cl\ge 3g-2$ are always trivial(see Proposition \ref{thm0}, (vi), whereas 
 every  presentation of a nonspecial globally generated line bundle  $\cl$  with $ \deg \cl \le 2g-1$ is  nontrivial, i.e., $\cl$ is always  presented by  $ \ck _X -g^0_d+E$ with
$g^0_d \neq 0$(see Remark \ref{rmknon}, (ii)).

Assume that $\cl$ admits a nontrivial  presentation $\cl \simeq \ck _X -g^0_d+E$.
Then we may assume that $h^0 (X, \co _X (E))=1$(see Proposition \ref{thm0}, (v)) and hence  denote the divisor $E$ by $\xi^0_e$, $e:=\deg E$.
Accordingly, $\cl$ can be written as
$$\cl\simeq \ck _X -g^0_d+\xi^0_e ~ \mbox{ with }  (g^0_d,
\xi^0_e )=0$$
which is   a better explicit description  than the type
of $\cl \simeq \ck _X -D+E$, since notations such as $g^0_d$,
$\xi^0_e$ include information on degrees and dimensions of $|D|$ and $|E|$.

Conversely, we obtain  a nonspecial line bundle $\cl \simeq \ck _X -g^0_d+E$ on $X$ whenever we take effective  divisors $g^0 _d$ and $E\ge 0$ on $X$ with $(g^0_d,E )=0$.  If we obtain sufficient conditions on $D,E$ for $\cl \simeq \ck _X -g^0_d+E$ to be  minimal, then we can construct  a nonspecial line bundle $\cl \simeq \ck _X -g^0_d+E$ with/without an $n$-secant $(n-2)$-plane of the embedded curve $\varphi _{\cl} (X)$ by taking some divisors $D,E$ on $X$.
This study  could also provide a clue  to detect   the family of nonspecial line bundles with specific  properties for such secant spaces.

Note that for a nonspecial  very ample $\cl \simeq \ck _X -g^0_d+E$  the  curve  $\varphi _{\cl} (X) $ is a projection of $\varphi _ {\ck _X +E} (X)$ from $ {\langle g^0_d \rangle}_{\ck _X +E } ,$   whereas  for a special very ample line bundle $\cl \simeq \ck _X -D$ on a nonhyperelliptic curve $X$ the curve $\varphi _{\cl} (X)$  is a projection of the canonical curve $\varphi _ {\ck _X} (X)$ from $ {\langle D \rangle}_{\ck _X}$.
This gives another perspective on our study  that finding a minimal presentation of a very ample line bundle $\cl$ is equivalent to choosing a minimal effective divisor $E$ satisfying  (1) $\varphi _ {\cl} (X)$ is  a projection of $\varphi _ {\ck _X +E} (X)$, (2)  both $\varphi _ {\cl} (X)$  and $\varphi _ {\ck _X +E} (X)$ possess the same  properties with respect to $n$-secant $(n-2)$-spaces.



    It is interesting that  every presentation  $ \cl \simeq\ck _X -g^0_d+\xi^0_e$ with $d+e \le gon(X)$( resp. $d+e < gon(X)$ ) is a ( resp. unique ) minimal one(see Theorem \ref{corgon}).  On the other hand, there are  examples  of  $ \cl \simeq \ck _X -g^0_d+\xi^0_e$ with $d+e \ge gon(X) +1$  which are not minimal(see Example \ref{exgon}).

    Furthermore, if $\cl$ admits a presentation $\cl\simeq\ck _X -g^0_d+\xi^0_e$ with $d+e = gon(X)$, then the number of  presentations of $\cl$ with the same type $(d,e)$   is at most one plus   the number of pencils  $g^1_{gon(X)}$ on $X$(see Remark \ref{rmkcan}, (iii)).
This means that  for $e\ge 3$ the number of $e$-secant $(e-2)$-planes of $\varphi _{\cl} (X)$ is at most one plus the number of  pencils  $g^1_{gon(X)}$.
In addition,  we show that an m-fold covering $X$ of an elliptic curve with $\gon(X)=2m$ have a line bundle $\cl \simeq \ck _X -g^0_d+\xi^0_e$
 admitting  infinitely many presentations  of the same type $(d,e)$ with $d+e=\gon(X)$(see Example \ref{egel}).

Note that  for a smooth curve $X$ with a well known $g^r_d$ the line bundles  $\cl \simeq \ck _X -g^r_d +\xi ^0 _e$ are very typical nonspecial line bundles on $X$. Thus we investigate   minimal presentations of $\cl \simeq \ck _X -g^r_d +\xi ^0 _e$ on the curve  $X$. To do this, we set $\beta := \mbox{max} \{ \deg (\xi ^0 _e , D) | D \in g^r_d \}$. Then we may expect the minimality of $\cl \simeq \ck _X -g^0_{d-\beta} +\xi ^0 _{e-\beta} $, where $\deg  (\xi ^0 _e , D)=\beta$  for a $D\in g^r_d$,
$g^0_{d-\beta} := g^r_d - (\xi ^0 _e , D)$ and $\xi ^0 _{e-\beta}:=\xi ^0 _e -(\xi ^0 _e , D)$.
Such an expectation holds  under some specific conditions and   there is 
 also an  example where the expectation fails(see Theorem \ref{submin}, Example \ref{subex}).

In section 3, we investigate sufficient conditions for  the minimality of presentations of nonspecial line bundles  on multiple coverings.  For  an   $n$-fold covering $X$   via $\phi :X \to Y$  a  presentation $\cl\simeq \ck_X-g^0_d+\xi^0_e$ with  $d+e \le \mu$ is  minimal  if $\deg (g^0_d , \phi ^* (P)) + \deg (\xi^0_e, \phi ^* (Q))\le n$ for  any $P$, $Q \in Y$,  where  $ \mu := \mbox{min}\{ \deg \cn ~| ~\cn  \mbox{:  globally generated and not composed with }\phi \}$(see Theorem \ref{mainthm}).
Specifically, the number $\mu$ is greater than  $\frac{g+1}{2}$(resp. $\frac{g-n\gamma}{n-1}$) for a general $n$-gonal curve(resp. for a simple $n$-fold covering of a smooth curve of genus $\gamma$). Here,  a multiple covering is said to be {\it  simple} if the covering morphism does not factor through. 
 Note that general $g^0_d$ and  $\xi^0_e$ on $X$ satisfy the condition $\deg (g^0_d , \phi ^* (P)) + \deg (\xi^0_e, \phi ^* (Q))\le n$ for  any $P$, $Q \in Y$.
Thus whenever we take general $g^0_d$ and  $\xi^0_e$ on a multiple covering $X$  with ${e\ge 2}$ and $d+e \le \mu$, we obtain a nonpecial line bundle $\cl\simeq \ck_X-g^0_d+\xi^0_e$ on $X$ which is $(e-2)$-very ample.
 This means that for any positive number $q\le \mu -1$ we can construct $q$-very ample nonspecial line bundles on $X$  with a given degree$\ge 2g-1+2e-\mu$.
 It is also notable that  for  an $n$-fold covering $\phi : X \to \P^1$ the condition such that $\deg (g^0_d , \phi ^* (P)) + \deg (\xi^0_e, \phi ^* (Q))\le n$ for  any $P$, $Q \in \P^1$ is necessary  for $\cl\simeq \ck_X-g^0_d+\xi^0_e$ to be  minimal(see Proposition \ref{thmgon}). 

We also deal with  minimal presentations  of  typical line bundles such as $\cl \simeq \ck  _X  - \phi ^* (g^2_d) +\xi ^0 _{e+2}$ and $\cm\simeq \ck  _X  - \phi ^* (g^1_{d-1}) +\xi ^0 _{e+1}$ on  a
 simple $n$-fold covering $X$ of a smooth plane curve $Y$  via $\phi : X \to Y$(see Theorem \ref{planeth}). 

In section 4, we apply minimal  presentations of nonspecial line bundles to  investigate property $(N_p )$, since $(p+1)$-very ampleness is  very closely connected with property $(N_p )$. M. Green and R. Lazarsfeld showed that a line bundle $\cl$ of degree $2g+p$ on a nonhyperelliptic curve $X$  satisfies $(N_p)$ if and only if  $\varphi _{\cl} (X)$ has no (p+2)-secant p-planes, that is, $\cl$ is $(p+1)$-very ample(see \cite{GL1}, Theorem 2). On the other hand,  it is well known that if a very ample line bundle  $\cl$ on $X$   fails to be $(p+1)$-very ample then $\cl$ does not satisfy  $(N_p )$.

Along this line,  the validity of its converse under the condition $\deg \cl\ge 2g+1+p-2h^1(X,\cl)-\cli(X)$ was conjectured by M. Green and R. Lazarsfeld in \cite{GL}.
It is called  Green-Lazarsfeld's conjecture on $(N_p)$.
In fact, they have shown in the paper that this conjecture holds for $(N_0)$.
Since M. Aprodu demonstrated in \cite{Ap1} that general gonality curves satisfy Green's Conjecture on syzygies of canonical curves(:this validity was remarked after Theorem 2 in \cite{Ap1}), we can easily see that the special line bundles on them satisfy Green-Lazarsfeld's conjecture on $(N_p)$ by Theorem 1 in \cite{CKKw}.
Thus a natural question is  on the existence of a very ample line bundle $\cl$ on $X$ with $\deg \cl = 2g+p-2h^1(X,\cl)-\cli(X)$ which does not satisfy $(N_p)$ even if $\varphi _{\cl} (X)$ does not admit a $(p+2)$-secant $p$-plane. Such a line bundle  will  be called  {\it an  extremal  line bundle for Green-Lazarsfeld's conjecture on $(N_p)$}.

Using theorems on the minimality of presentations in section 3, we  verify that   general $n$-gonal curves and   simple $n$-fold coverings of smooth plane curves  carry  nonspecial extremal  line bundles for Green-Lazarsfeld's conjecture on $(N_p)$(see  Theorem \ref{eggon}, \ref{plNp}).  To do this study, we   compute   the Clifford index of   multiple  coverings of smooth plane curves(see Proposition \ref{clif}).

\section{The presentations  of  nonspecial line bundles}
In this section,  we investigate  properties of presentations of nonspecial line bundles on smooth curves. This study   naturally focuses on the  minimal presentations of  nonspecial line bundles which can be regarded as  efficient ones.  Before going to this observation, we will consider a type of refinement of very ampleness  which  is closely related to  minimal presentation. 

Recall that a  line bundle $\cl$ on a smooth curve $X$ is said to
 be {\it  $q$-very ample}, $q\ge 0$,  if $h^0 (X,\cl ) - h^0 (X,\cl(-F))= \deg F$ for
 any effective divisor $F$ with $\deg F \le q+1$.  Specifically, $0$-very ampleness and $1$-very ampleness  mean globally generatedness and   very ampleness, respectively. If $q\ge 1$, then
$\cl$ is  very ample and  the embedded curve $\varphi_{\cl} (X) \subseteq \P H^0(X,\cl)^*$  has no
 $n$-secant $(n-2)$-planes for any number $n\le q+1$, equivalently,
 dim${\langle F \rangle} _{\cl}=\deg F -1$ for any effective divisor $F$ on $X$
 with $\deg F \le q+1$.  Now,  we define an invariant to measure the linear position property of  $\varphi_{\cl} (X)$ in $\P H^0 (X, \cl)^*$.

\begin{Definition}\label{defor}
The order of very ampleness of a line bundle $\cl$  is defined  by
 $$Ova(\cl) :=\max \{~ q \in \Z ^{\ge  0} ~|~ \cl \mbox{ is } q\mbox{-very ample
 }\}.$$
 \end{Definition}

In the following theorem, we examine basic  properties of    presentations  of nonspecial line bundles.

\begin{Proposition}\label{thm0}
Let $\cl \simeq \ck _X -g^0_d +E$ be a presentation of a nonspecial line bundle  $\cl$ on a smooth curve $X$  of genus $g\ \ge 2$. Then we have the following.\\
\noindent{$(i)$}  $\cl\simeq \ck _X -g^0_t+F$ for a $F\ge 0$ with
$\deg F< \deg E$  if $g^0_d \neq 0$, $h^0 (X, \co _X  (E))\ge 2$.\\
\noindent{$(ii)$} $d \ge \deg E -1$ if  $\deg \cl \le 2g-1$. \\
\noindent{Specifically,} if $\cl \simeq \ck _X -g^0_d +E$ is a
minimal presentation then \\
\noindent{$(iii)$}   $\deg F \ge \deg E$ in case $h^0 (X, \cl )-h^0 (X, \cl (-F ))\le \deg
    F-1$,\\
\noindent{$(iv)$}  $Ova(\cl )=\deg E-2$,\\
\noindent{$(v)$}  $h^0 (X, \co _X  (E))=1$ in case  $d>0$,\\
\noindent{$(vi)$}   $d=0$  in case  $\deg \cl \ge 3g-2$, \\
 \noindent{$(vii)$} $d>0$ and $h^0 (X, \co _X  (E))=1$ in case $\deg E \ge 2$ and $\deg \cl \le 2g-1$, \\
\noindent{$(viii)$} $d\le g-1$;\ and  $d\le g-2$ in case $\deg E \ge 3$.\\
   \end{Proposition}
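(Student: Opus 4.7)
My plan is to handle the eight parts in the order of their logical dependencies.

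Part (ii) is an immediate degree computation: $\deg\cl = 2g-2-d+\deg E \leq 2g-1$ gives $d \geq \deg E - 1$. For (i), I would argue by cancellation: since $D = g^0_d \neq 0$, pick $P \in \mathrm{supp}(D)$; the hypothesis $h^0(\co_X(E)) \geq 2$ implies $h^0(\co_X(E-P)) \geq 1$, so some $E' \in |E|$ contains $P$. With $G = \gcd(D, E') \geq P > 0$, the relation $\cl \simeq \ck_X - (D-G) + (E'-G)$ has coprime parts and $\deg(E'-G) < \deg E$; if $h^0(\co_X(D-G)) > 1$, I would iterate the procedure on this smaller-$E$ relation, which terminates because the $E$-part degree strictly decreases at each step. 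Part (v) then follows as the contrapositive of (i).

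The technical engine for (iii) and (iv) is the Serre-dual identity
\[
h^0(\cl) - h^0(\cl(-F)) \;=\; \deg F - h^0(\ck_X \otimes \cl^{-1}(F)),
\]
obtained from $0 \to \cl(-F) \to \cl \to \cl|_F \to 0$ together with $h^1(\cl) = 0$. In (iii), the hypothesis gives $h^0(\ck_X \otimes \cl^{-1}(F)) \geq 1$, producing an effective $D' \sim \ck_X \otimes \cl^{-1}(F)$ and a relation $\cl \simeq \ck_X - D' + F$; cancelling $G = \gcd(D', F)$ and iteratively invoking (i) until $h^0(\co_X(D'-G)) = 1$ yields a valid presentation whose $E$-part has degree $\deg F - \deg G$. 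The case $F - G = 0$ is excluded by nonspeciality, since it would force $\cl \simeq \ck_X - (D'-F)$ with $h^1 \geq 1$, or else $\cl \simeq \ck_X$, which admits no presentation; minimality then gives $\deg F \geq \deg E$. Part (iv) combines (iii) (which yields $(\deg E - 2)$-very ampleness) with the identity at $F = E$: $h^0(\cl) - h^0(\cl(-E)) = \deg E - h^0(\co_X(D)) = \deg E - 1$, so $\cl$ fails to be $(\deg E - 1)$-very ample.

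Parts (vi) and (vii) are corollaries I would dispatch quickly. For (vi), $d > 0$ and (v) force $h^0(\co_X(E)) = 1$, hence $\deg E \leq g$ by Riemann--Roch, giving $\deg\cl \leq 3g - 3 < 3g - 2$, a contradiction. For (vii), (ii) gives $d \geq \deg E - 1 \geq 1$, after which (v) applies. The main obstacle I foresee is (viii). The weak bound $d \leq g$ is immediate from $h^1(\co_X(D)) = g - d \geq 0$. To sharpen to $d \leq g - 1$ (assuming $\deg E \geq 2$), I would assume $d = g$ and construct a shorter presentation: $\cl$ is then effective with $h^0(\cl) = \deg E - 1 \geq 1$, and the pencil structure of $|D + Q|$ (which has $h^0 = 2$ by Riemann--Roch since $d = g$) for a suitable base point $Q$ of $|\cl|$ allows one to produce an effective $D'' \sim \ck_X \otimes \cl^{-1}(Q)$ with $Q \notin \mathrm{supp}(D'')$ (otherwise $D'' - Q$ would be an effective representative of $\ck_X \otimes \cl^{-1}$, contradicting nonspeciality), yielding a presentation $\cl \simeq \ck_X - D'' + Q$ with strictly smaller $E$-part and contradicting minimality. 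The stronger bound $d \leq g - 2$ when $\deg E \geq 3$ I would derive from (iv) combined with embedding obstructions: (iv) forces $\cl$ very ample, so if $d = g - 1$ then $h^0(\cl) = \deg E$ and $X$ embeds linearly normally as a smooth curve of degree $g + \deg E - 1$ in $\P^{\deg E - 1}$; the plane-curve genus formula $(g+1)g/2 \neq g$ rules this out for $g \geq 2$ when $\deg E = 3$, and Castelnuovo-type bounds together with the $(\deg E - 2)$-very ampleness constraint on hyperplane sections handle $\deg E \geq 4$. Coordinating these numerical obstructions uniformly in $\deg E$ is the subtle point, and I expect this to be the real work of the proposition.
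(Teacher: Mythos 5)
Parts (i)--(v) and (vii) of your proposal are essentially correct and follow the paper's own route: the cancellation argument for (i), the Euler-characteristic identity $h^0(\cl)-h^0(\cl(-F))=\deg F-h^0(\ck_X\otimes\cl^{-1}(F))$ driving (iii) and (iv), and the reduction of the effective divisor in $|\ck_X\otimes\cl^{-1}(F)|$ to an honest presentation before invoking minimality (which the paper leaves implicit) are all as in the text. Your (vi) is a genuinely different and clean alternative: the paper instead observes that $\deg\cl\ge 3g-2$ produces a trivial presentation $\ck_X+E'$ with $\deg E'=\deg\cl-2g+2$, and minimality forces $d=0$ by comparison; your route via (v) and Riemann--Roch ($h^0(\co_X(E))=1$ gives $\deg E\le g$, hence $\deg\cl\le 3g-3$ when $d>0$) is equally short.

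The genuine gaps are both in (viii). For $d\le g-1$ you rely on ``a suitable base point $Q$ of $|\cl|$''; but $h^0(\ck_X\otimes\cl^{-1}(Q))\ge 1$ holds precisely when $Q$ is a base point of $|\cl|$, and $|\cl|$ need not have any, so your divisor $D''$ may fail to exist for every $Q$. The paper's argument needs no base point: with $r=h^0(\cl)-1$, take any $F\le G\in|\cl|$ with $\deg F=r+1$; then $h^0(\cl(-F))\ge h^0(\cl(-G))\ge 1$ gives $h^0(\cl)-h^0(\cl(-F))\le\deg F-1$, and (iii) yields $\deg E\le r+1=g-1-d+\deg E$, i.e.\ $d\le g-1$. (Your restriction to $\deg E\ge 2$ is not in the statement, though to be fair the argument does implicitly require $h^0(\cl)\ge 1$, which is automatic exactly when $\deg E\ge 2$.) More seriously, your plan for $d\le g-2$ when $\deg E\ge 4$ does not close: the Castelnuovo bound for a linearly normal curve of degree $g-1+\deg E$ in $\P^{\deg E-1}$ is far larger than $g$ and produces no contradiction, and no amount of ``coordinating numerical obstructions'' substitutes for the missing input. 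That input is the Lemma of Keem--Kim--Martens cited in the paper: a smooth nondegenerate curve of degree $\ge r+2$ in $\P^r$ always possesses an $r$-secant $(r-2)$-plane. Applying (iii) to such a secant divisor $F$ of degree $r$ gives $\deg E\le r=g-2-d+\deg E$, hence $d\le g-2$, uniformly for $\deg E\ge 3$; your plane-curve genus computation is exactly the case $r=2$ of this statement, but the general case is a theorem you would need to import, not derive.
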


\begin{proof}
$($i$)$   Assume that $h^0 (X, \co _X  (E)) \ge 2$. For any $P \in \mbox{supp}(g^0_d)$
there is  an effective divisor $E' \simeq E$ with $(g^0_d, E' )\ge P$. Set $g^0_t :=
 g^0_d - (g^0_d, E' )$ and $F := E' - (g^0_d, E' )$.
Then there is another presentation  $\cl\simeq \ck _X -g^0_t+F$ with
$\deg F< \deg E$.

\noindent{$($ii$)$} For $\deg \cl \le 2g-1$,  the equality $\deg \cl =2g-2-d+\deg E$ gives $d\ge \deg E-1$.

\noindent{$($iii$)$} Assume that $h^0 (X, \cl )-h^0 (X, \cl (-F ))\le \deg
    F-1$ for a divisor $F$ on $X$. Then the Riemann-Roch
    Theorem gives the inequality $h^0 (X, \ck _X \otimes \cl
    ^{-1} (F))\ge 1$, which implies $\deg F \ge \deg E$ by the
    minimality of  $\cl \simeq \ck _X -g^0_d +E$.

\noindent{$($iv$)$} This result  follows from $($iii$)$ and the equality  $h^0 (X, \cl )-h^0 (X, \cl (-E ))= \deg    E-1$.

\noindent{$($v$)$} This is trivial by (i)

\noindent{$($vi$)$}  The condition $\deg \cl \ge 3g-2$ yields $\deg \cl \otimes \ck _X^{-1} \ge g$ and so there is an effective divisor $E$ such that $\cl \otimes \ck _X^{-1} \simeq \co _{X} (E)$, equivalently, $\cl \simeq \ck _X +E$, which is in  itself a minimal presentation.

\noindent{$($vii$)$}  Using (ii) and (v), we get $d>0$ and $h^0 (X, \co _X  (E))=1$ in case $\deg E \ge 2$ and $\deg \cl \le 2g-1$.

\noindent{$($viii$)$} Set $r:= h^0 (X, \cl ) -1.$  Choose a  $F \le G \in | \cl |$ with $\deg F= r+1$. Then
the divisor  $F$ satisfies  $ h^0 (X, \cl )-h^0 (X, \cl (-F )) \le
\deg F-1,$  which yields $\deg E\le \deg F=r+1$ by
$($iii$)$. Hence, we obtain $ d\le g-1$  since $r=(2g-2-d+\deg E)
-g$.

Assume $\deg E \ge 3$. Then $\cl$ is very ample. Since $r=\deg \cl -g$,  the condition $g\ge 2$ gives $\deg \varphi _{\cl} (X) \ge r+2$, whence the smooth curve $\varphi _{\cl} (X)$ has a $r$-secant $(r-2)$-plane by Lemma in \cite{KKM}. By (iii),  we have
$$\deg E\le r= (2g-2-d+\deg E  )-g,$$
which implies $d\le g-2$. Thus the result (viii) is verified.
 \end{proof}

\begin{Remark} (i) $\cl \simeq \ck _X -g^0_d +E$ is a minimal presentation if and only if $Ova (\cl) = \deg E -2$.\\
\noindent{$(ii)$} The minimal presentations of nonspecial line bundles $\cl$ with $\deg \cl \ge 3g-2$ are always trivial.  On the other hand, all the minimal presentations of globally generated nonspecial line bundles $\cl$ with $\deg \cl \le 2g-1$ are nontrivial since $\deg E\ge 2$ by being globally generated.\\
\noindent{$(iii)$} To arbitrary pair of effective divisors $g^0_d $ and $\xi^0_e$  with $ (g^0_d ,\xi^0_e ) =0$, we can associate a nonspecial line bundle $\cl \simeq \ck _X -g^0_d
+\xi^0_e$.  By Proposition \ref{thm0}, (vii),  it is enough to consider the divisors $g^0_d $ only in the range $d\le g-1$ (resp. $d\le g-2$)   for such construction of  (resp. very ample) nonspecial line bundles.\\
\noindent{$(iv)$}  If $\cl$ is minimally presented by $\ck _X -g^0_d+ E$ with
$\deg E\ge 3$, then the embedded curve
 $\varphi _{\cl} (X)$ has no $n$-secant $(n-2)$-planes for $n\le \deg E-1$. Moreover, if $\cl \simeq
\ck _X -g^0_d +E$ is
 a unique minimal presentation, then ${\langle E \rangle}_{\cl}$ is a unique $\deg E$-secant $(\deg E-2)$-plane.
\label{rmknon}
\end{Remark}

The following theorem plays a basic role in dealing with presentations of nonspecial line bundles.

\begin{Theorem}
 Assume that a nonspecial line bundle $\cl$ on a smooth curve $X$ has two different presentations
  $\cl \simeq \ck _X -g^0_d +E$ and $\cl \simeq \ck _X -g^0_t +F$.
  Then  $g^0_d +F \simeq  g^0_t +E$ but
    $g^0_d +F \neq  g^0_t +E$ as divisors.
    In particular, we have $h^0 (X, \co _X  (g^0_d +F ))\ge 2$.
  \label{lem1}
\end{Theorem}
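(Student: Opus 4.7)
The plan is to verify the three assertions in order. The linear equivalence $g^0_d + F \simeq g^0_t + E$ is immediate: subtracting $\ck_X$ from the two presentations $\cl \simeq \ck_X - g^0_d + E$ and $\cl \simeq \ck_X - g^0_t + F$ gives $-g^0_d + E \simeq -g^0_t + F$, hence $g^0_t + E \simeq g^0_d + F$. Granted this, the final inequality $h^0(X, \co_X(g^0_d + F)) \geq 2$ will follow at once from the non-equality as divisors: the linear system $|g^0_d + F|$ will then contain the two distinct effective divisors $g^0_d + F$ and $g^0_t + E$, so $\dim|g^0_d + F| \geq 1$.

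The heart of the proof is thus to show $g^0_d + F \neq g^0_t + E$ as divisors. I would argue by contradiction: assume equality as effective divisors and compare multiplicities at an arbitrary point $P \in X$. Writing
\[
a := \mathrm{ord}_P(E), \quad b := \mathrm{ord}_P(g^0_t), \quad c := \mathrm{ord}_P(F), \quad \delta := \mathrm{ord}_P(g^0_d),
\]
the hypotheses translate into $a + b = c + \delta$ together with the coprimality constraints $\min(a, \delta) = 0$ (from $(g^0_d, E) = 0$) and $\min(b, c) = 0$ (from $(g^0_t, F) = 0$). A short case split on which of $a, \delta$ and which of $b, c$ vanish forces $a = c$ and $b = \delta$ in every case. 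Consequently $E = F$ and $g^0_d = g^0_t$ as divisors, contradicting the assumption that the two presentations are distinct.

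The main obstacle is really just the pointwise bookkeeping in the case split; the argument works precisely because the two coprimality conditions must be used \emph{jointly} to rule out the mixed configurations in which, say, a point $P$ lies in $\mathrm{supp}(E) \cap \mathrm{supp}(g^0_t)$ with $a, b > 0$ simultaneously. Beyond this, no deeper ingredient is required.
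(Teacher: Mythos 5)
Your proposal is correct and follows essentially the same route as the paper: the linear equivalence is read off from the two presentations, the non-equality as divisors is obtained by assuming equality and using the two conditions $(g^0_d,E)=0$ and $(g^0_t,F)=0$ jointly to force the presentations to coincide, and the conclusion $h^0(X,\co_X(g^0_d+F))\ge 2$ follows from having two distinct effective divisors in the same linear system. Your pointwise multiplicity bookkeeping is just a more explicit version of the paper's divisor inequalities $g^0_t\le g^0_d$, $F\le E$ and their reverses.
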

\begin{proof}
 The equivalences  $\cl \simeq \ck _X -g^0_d +E \simeq \ck _X -g^0_t
 +F ~$
 imply that  $$g^0_d +F \simeq  g^0_t +E.$$  Assume that
 $g^0_d +F =  g^0_t +E$ as divisors. Then we get
 $$ g^0_t \le g^0_d ,~~ F \le E ,$$ according to  the condition
 $ ( g^0_t, F )=0$ for the presentation $\cl \simeq \ck _X -g^0_t +F$.
 The condition $ ( g^0_d, E )=0$ also gives  $$ g^0_d \le g^0_t ,~~E \le F . $$
  It is a contrary to the assumption that $\cl \simeq \ck _X -g^0_d +E$ and $\cl \simeq \ck _X -g^0_t +F$ are different.
  Hence, we have   $g^0_d +F \neq  g^0_t +E$ as divisors which implies 
    $h^0 (X, \co _X  (g^0_d +F ))\ge 2.$
 Thus the theorem is proved.
\end{proof}

\begin{Corollary}[Lemma 6, \cite{K1}]
 Let $\cl$  be a nonspecial line bundle  on a smooth curve $X$  which is presented by
 $\ck _X -g^0_d +E$  with  $\deg E\ge 3$. If  $\cl$ is not very ample,
 then there are   $g^0 _t\ge  0$ and  $P,~Q \in X$ such that   $g^0 _t+E \simeq g^0_d +P+Q$ and $g^0 _t +E \neq g^0_d +P+Q$ as divisors
 which implies $h^0 (X, \co _X  (g^0_d +P+Q))\ge 2$. \label{coro1}
\end{Corollary}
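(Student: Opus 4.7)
The plan is to produce points $P, Q \in X$ (possibly equal) so that $h^0(X,\cl) - h^0(X,\cl(-P-Q)) = 1$, and then use Serre duality to rewrite $\cl$ as $\ck_X - g^0_t + P + Q$ with $h^0(\co_X(g^0_t)) = 1$.

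First I would verify the existence of such a pair. If $\cl$ is not globally generated, I take $P$ in the base locus of $\cl$ and $Q$ generic off the base locus of $\cl(-P)$; this yields a drop of exactly $1$. If $\cl$ is globally generated but not very ample, then either two distinct points $P \neq Q$ satisfy $\varphi_{\cl}(P) = \varphi_{\cl}(Q)$, or $\varphi_{\cl}$ fails to separate a tangent direction at some $P$ and one takes $Q = P$; again the drop is exactly $1$. Combining this with Riemann--Roch, Serre duality, and nonspeciality of $\cl$ yields
\[
h^0(X, \ck_X \otimes \cl^{-1}(P+Q)) = 2 - \bigl(h^0(X,\cl) - h^0(X,\cl(-P-Q))\bigr) = 1.
\]
Hence there is a \emph{unique} effective divisor $g^0_t \ge 0$ with $\cl \simeq \ck_X - g^0_t + P + Q$ and $h^0(\co_X(g^0_t)) = 1$, justifying the notation.

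Combining this with the given presentation $\cl \simeq \ck_X - g^0_d + E$ yields the linear equivalence $g^0_t + E \simeq g^0_d + P + Q$. To exclude equality as divisors I argue by contradiction: if $g^0_t + E = g^0_d + P + Q$ as divisors, then at each $R \in \sup (E)$ the coprimality $(g^0_d, E) = 0$ forces $c_R(g^0_d) = 0$, whence $c_R(E) \le c_R(P+Q)$; summing over $R \in \sup (E)$ gives $\deg E \le \deg(P + Q) = 2$, contradicting $\deg E \ge 3$. Therefore $g^0_t + E$ and $g^0_d + P + Q$ are distinct effective divisors in the same linear system, so $h^0(X, \co_X(g^0_d + P + Q)) \ge 2$ follows immediately.

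The main obstacle is the first step: securing an \emph{exact} drop of one (rather than merely $\le 1$) is what forces $h^0(\co_X(g^0_t)) = 1$, so that the construction produces a genuine $g^0_t$ in the paper's notational convention. Once this is handled by the case analysis above, the remainder is a direct coprimality computation in the spirit of Theorem~\ref{lem1}.
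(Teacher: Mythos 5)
Your proof is correct and follows the route the paper intends for this corollary: use the failure of very ampleness to produce $P,Q$ with $h^0(X,\ck_X\otimes\cl^{-1}(P+Q))\ge 1$ via Riemann--Roch and nonspeciality, and then compare the resulting expression for $\cl$ with the given presentation as in Theorem \ref{lem1}. The two details you supply beyond what the paper makes explicit are genuinely needed and correctly handled: forcing the drop $h^0(\cl)-h^0(\cl(-P-Q))$ to equal exactly one so that the resulting divisor really satisfies $h^0(X,\co_X(g^0_t))=1$ as the notation demands, and proving $g^0_t+E\neq g^0_d+P+Q$ by the direct count $\deg E\ge 3>\deg(P+Q)$, which has the advantage of not requiring $(g^0_t,P+Q)=0$ or an appeal to the "different presentations" hypothesis of Theorem \ref{lem1}.
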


As we have seen,  admitting a  presentation $\cl \simeq \ck _X -g^0_d +E$ with $\deg E \ge 3$ does not guarantee the  very ampleness  of $\cl $. Thus we investigate  sufficient conditions for the very ampleness of  $\cl \simeq \ck _X -g^0_d +E$ with $d>0$ and $\deg E \ge 3$ in the following. Here, we  consider only  the case of $h^0 (X, \co _X (E))=1$ due to Proposition \ref{thm0}, (i).
\begin{Theorem}\label {va}
Let $X$ be a smooth curve of genus $g\ge 4$. And  let   $g^0_d$, $\xi ^0 _e$ be general  effective divisors on $X$ with $e\ge 3$ and $ ( g^0_d, \xi ^0 _e )=0$. \\
 \noindent{$(i)$} If $X$ is nonhyperelliptic, then $\cl \simeq \ck _X -g^0_d +\xi ^0 _e$  with $d\le g-3$ is very ample.\\
\noindent{$(ii)$} If  $X$ is hyperelliptic, then  $\cl \simeq  \ck _X -g^0_d +\xi ^0 _e$   with $d\le g-2$ is very ample.
\end{Theorem}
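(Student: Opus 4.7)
The plan is to argue by contradiction using Corollary \ref{coro1}. Since $\deg\xi^0_e=e\ge 3$, if $\cl\simeq\ck_X-g^0_d+\xi^0_e$ failed to be very ample, the corollary would supply $g^0_t\ge 0$ and $P,Q\in X$ such that the divisors $D_1:=g^0_d+P+Q$ and $D_2:=g^0_t+\xi^0_e$ are linearly equivalent, distinct as divisors, and satisfy $h^0(X,\co_X(D_1))\ge 2$. Writing $L:=\co_X(D_1)$, this places $L$ in the Brill--Noether locus $W^1_{d+2}$ with $D_1,D_2\in|L|$. My plan is to bound the dimension of the bad locus
\[
B:=\{(g^0_d,\xi^0_e)\in X^{(d)}\times X^{(e)}\ :\ \text{such }g^0_t,P,Q\text{ exist}\}
\]
and show that $\dim B<d+e$ in both settings, so that a generic pair automatically lies outside $B$ and $\cl$ must be very ample.

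The key computation is the dimension of the incidence variety
\[
\mathcal I:=\bigl\{(L,D_1,D_2)\,:\,L\in W^1_{d+2},\ D_1,D_2\in|L|\bigr\},
\]
stratified by $L\in W^r_{d+2}\setminus W^{r+1}_{d+2}$, over which the fibre $|L|\times|L|$ contributes dimension $2r$. In case $(i)$, the hypothesis $d\le g-3$ gives $d+2\le g-1$, and Martens' theorem on a nonhyperelliptic curve yields $\dim W^r_{d+2}\le d+1-2r$, so $\dim\mathcal I\le d+1$. In case $(ii)$, the hypothesis $d\le g-2$ gives $d+2\le g$, and the standard description $W^r_{d+2}=r\cdot g^1_2+W^0_{d+2-2r}$ on a hyperelliptic curve yields $\dim W^r_{d+2}=d+2-2r$, so $\dim\mathcal I\le d+2$. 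Recovering a bad datum $(g^0_d,P,Q,g^0_t,\xi^0_e)$ from $(L,D_1,D_2)$ involves only finitely many combinatorial choices (which two points of $D_1$ play the roles of $P,Q$, and which $e$ points of $D_2$ make up $\xi^0_e$), so the natural projection has image $B$ with $\dim B\le\dim\mathcal I$. Because $e\ge 3$, this gives $\dim B\le d+2<d+e$ in both cases, and $B$ is a proper closed subvariety of $X^{(d)}\times X^{(e)}$.

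The main obstacle is the hyperelliptic case, where Martens' inequality is sharp and the bound on $\dim\mathcal I$ worsens by one; this is precisely what constrains $(ii)$ to $d\le g-2$ (so that $d+2\le g$ and the structural description of $W^1_{d+2}$ still applies) and makes $e\ge 3$ essential for the strict inequality $\dim B<d+e$. A minor technical point is the finiteness of the decomposition map on the relevant open locus, which is automatic once the general-position hypothesis on $(g^0_d,\xi^0_e)$ ensures that $D_1$ and $D_2$ have reduced support.
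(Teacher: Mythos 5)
Your argument is correct, and it reaches the conclusion by a genuinely different route from the paper's. Both proofs start from Corollary \ref{coro1} and both invoke Martens' theorem, but they diverge immediately afterwards. The paper uses only the generality of $g^0_d$ in its dimension count, obtaining $d-\alpha\le\dim W^{\alpha}_{d+2}(X)\le (d+2)-2\alpha-1$; this forces $\alpha=1$ together with \emph{equality} in Martens' bound, so Mumford's refinement must be invoked to classify $X$ as trigonal, bielliptic or a smooth plane quintic, and the contradiction is then extracted case by case from the base locus of $|g^0_d+P_1+P_2|$ (with a parallel two-case analysis of $\alpha g^1_2+B$ in the hyperelliptic setting). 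You instead feed the generality of the whole pair $(g^0_d,\xi^0_e)$ into a single incidence-variety count: each stratum of $\mathcal I$ over $W^r_{d+2}(X)\setminus W^{r+1}_{d+2}(X)$ has dimension at most $\dim W^r_{d+2}(X)+2r\le d+1$ (resp.\ $d+2$ in the hyperelliptic case), and since a bad pair is recovered from $(L,D_1,D_2)$ by finitely many combinatorial choices, the bad locus has dimension at most $d+2<d+e$ once $e\ge3$. This bypasses Mumford's theorem and all the case analysis, makes the role of the hypothesis $e\ge 3$ transparent, and extends more readily (e.g.\ to the $4$-secant statement the paper sketches via Keem's theorem); what it gives up is the explicit identification of the pencils responsible for the failure of very ampleness, which the paper's case analysis exhibits. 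Three small points of hygiene: $B$ is constructible rather than closed (its closure is then the proper subvariety you need); the finiteness of the decomposition map requires no reducedness hypothesis, since an effective divisor of degree $d+2$ has only finitely many effective subdivisors of each degree; and one should note that $L$ is forced to be special (its degree $d+2$ is at most $g-1$, resp.\ $g$), so Clifford's theorem gives $2r\le d+2$ and Martens' theorem, resp.\ the description $W^r_{d+2}(X)=rg^1_2+W^0_{d+2-2r}(X)$, indeed applies to every stratum.
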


\begin{proof} Assume that $\cl \simeq \ck _X -g^0_d +\xi ^0 _e$ is not very ample.  Corollary \ref{coro1} gives   $$\co _X ( g^0 _t+\xi ^0_e ) \simeq \co _X  (g^0_d +P_1+P_2) \in  W^{\alpha}_{d+2}(X),~~ \alpha \ge 1 ,$$ for some   $P_1 ,~P_2 \in X$  and $g^0 _t$ on $X$, where  $$W^{\alpha}_{d+2}(X) :=\{\cl \in J (X)~|~h^0(X,\cl)\ge \alpha+1,~\deg \cl={d+2}\}.$$

\noindent{(i)} Let $X$ be a nonhyperelliptic curve. Due to the general choice of $g^0_d$ with $d\le g-3$ and H. Martens' Theorem(:(5.1) Theorem in \cite{ACGH}), we obtain
$$d-\alpha \le\dim W^{\alpha}_{d+2}(X) \le (d+2)-2\alpha -1, $$
whence $\dim W^{\alpha}_{d+2}(X) = (d+2)-2\alpha -1$ and $\dim |g^0_d +P_1+P_2|
= \alpha =1$.  According to Mumford's Theorem(:(5.2) Theorem in \cite{ACGH})  we have  one of   the following cases with a base locus $B$ :\\
\noindent{(Case 1)} $\phi : X \stackrel{3:1}\longrightarrow \P^1$  and $  | \co _X  (g^0_d +P_1+P_2) | =g^1_3 +B$.\\
\noindent{(Case 2)} $\phi : X \stackrel{2:1}\longrightarrow   \Gamma$ for an elliptic curve $\Gamma$ and  $ | \co _X  (g^0_d +P_1+P_2) | =\phi ^* g^1_2 +B$.\\
\noindent{(Case 3)} $X$ is a smooth plane quintic and $ | \co _X  (g^0_d +P_1+P_2) | =g^1_4 +B $.

First, consider  (Case 3). Note that every divisor of $g^1_4$ on $X$ is cut out by a line  in $P ^2$. By the general choices of $g^0_d$ and $\xi ^0_e$ we obtain
$$B= g^0 _{d-2} \mbox{ and } B \ge\xi ^0 _{e-2} $$ for some  $g^0_{d-2}<g^0_d$ and $\xi ^0_{e-2} < \xi ^0 _e$,  since
$ | \co _X  (g^0_d +P_1+P_2) | =| \co _X  (\xi ^0 _e  +g^0_t )| =g^1_4 +B$.  This implies  that $\xi ^0_{e-2}\le (g^0_d, \xi ^0 _e )=0$, which is contrary to $e\ge 3$.

 Also (Case 2) cannot happen by the  following. The general choices of $g^0_d$ and $\xi ^0 _e$ imply that
$\deg (g^0_d , \phi ^* (Q)) \le 1$ and $\deg (\xi^0_e , \phi ^* (Q)) \le 1 $
for any $Q \in \Gamma$.
 Thus we obtain $$B= g^0 _{d-2}, \  \ B \ge\xi ^0 _{e-2}$$ for some $g^0_{d-2}<g^0_d$ and $\xi ^0_{e-2} < \xi ^0 _e$, since $| \co _X  (g^0_d +P_1+P_2) |=|g^0 _t+\xi ^0_e | =\phi ^* g^1_2 +B$. This cannot occur for $(g^0_d, \xi ^0 _{e})=0$ with $e\ge 3$ as in (Case 3).

Finally, we are led to  (Case 1).  Since  $ | \co _X  (g^0_d +P_1+P_2) | =| \co _X  (\xi ^0 _e  +g^0_t )| =g^1_3 +B$,   the general choices of $g^0_d$ and $ \xi ^0 _e$ give $$B= g^0_{d-1}  ~\mbox{ and }~B \ge \xi ^0 _{e-1}   $$  for some $g^0_{d-1}\le  g^0_d$ and $\xi ^0 _{e-1} \le \xi ^0 _e$. This is a contradiction   to $(g^0_d ,  \xi ^0 _e ) =0$ with $e\ge 3$. As a consequence, the result (i) is valid.

\noindent{(ii)} Let $X$ be hyperelliptic. Due to the condition $d\le g-2$,  the linear system $ |\co _X  (g^0_d +P_1+P_2 ) | =g^{\alpha}_{d+2}$ with $\alpha \ge 1$ is special, and hence $$|\co _X  (g^0_d +P_1+P_2) | =| \co _X  (\xi ^0 _e +g^0_t )|=\alpha g^1_2 +B.$$
 Since  $ g^0_d$ is generally chosen,  only  two cases $\alpha =1$ and $\alpha =2$ can occur. According to the general choices of $ g^0_d$ and $\xi ^0 _e$, if
$\alpha =1$ then
$$B =g^0_{d- 1} +P_i ~\mbox{ and }~~\xi ^0 _{e-1} \le B \ \mbox{ for  some }\   g^0_{d- 1}\le g^0_d , \  \xi ^0 _{e-1} \le \xi ^0 _e ;$$
  if $\alpha =2$ then
$$ B =g^0_{d-2}~ \mbox{ and }~\xi ^0 _{e-2} \le B \ \mbox{ for  some }\ g^0_{d- 2}\le g^0_d, \ \xi ^0 _{e-2} \le \xi ^0 _e .$$  These are impossible for  $(g^0_d , \xi ^0 _e ) =0$ with $e\ge 3$. Consequently, the line bundle $\ck _X -g^0_d +\xi ^0 _e$ is very ample and hence the theorem is proved.
\end{proof}

\begin{Remark} Let $X$  be a smooth curve  of genus $g \ge 11$ and $\cl$ be a line bundle presented  by $\ck _X -g^0_d +\xi ^0 _e$ for general  $g^0 _{d}$ and $\xi ^0 _e$ with $d\le g-7$  and $e\ge 4$. Using Keem's Theorem in  \cite{Keem} which generalizes H. Martens' Theorem, we can similarly verify that  the embedded curve  $\varphi _{\cl}(X)$ has no 4-secant plane unless $X$ is either hyperelliptic,
trigonal, elliptic-hyperelliptic, a 4-sheeted covering of $\P^1$, or a 2-sheeted covering of a curve of genus 2.
\end{Remark}

\begin{Theorem}\label {corgon}
Let $\cl$  be a nonspecial line bundle  on a smooth curve $X$ which admits a
nontrivial presentation $\cl \simeq \ck _X -g^0_d +\xi^0_e$.

\noindent{$(i)$}  If $d+e\le gon(X)$, then $\cl \simeq \ck _X -g^0_d
+\xi^0_e$ is a minimal presentation.\\
\noindent{$(ii)$}  If $d+e< gon(X)$, then $\cl \simeq \ck _X -g^0_d
+\xi^0_e$ is a unique minimal presentation.\\
\noindent{$(iii)$}  If  $\cl \simeq \ck _X -g^0_d +\xi^0_e$ with $d+e=\gon (X)$   admits other presentations $\cl \simeq \ck _X -h^0_{d,j} +\zeta ^o _{e,j}$ of type $(d,e)$ for $j\in J$, then all the  pencils $| \co _X (g^0_d +\zeta ^o _{e,j}) |\  ( = | \co _X (h^0_{d,j} +\xi ^0_e)|)$ with $j \in J$ are mutually distinct $g^1_{\gon (X)}$ on $X$.
 \end{Theorem}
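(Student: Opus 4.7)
The plan is to reduce all three parts to a single mechanism. By Theorem~\ref{lem1}, any two distinct presentations $\cl\simeq\ck_X-g^0_d+E\simeq\ck_X-g^0_t+F$ produce a line bundle $\co_X(g^0_d+F)\simeq\co_X(g^0_t+E)$ with $h^0\ge 2$, i.e.\ a pencil on $X$ of degree $d+\deg F$. The definition of gonality then forces $d+\deg F\ge\gon(X)$, and this single pencil-vs-gonality constraint drives all three statements.

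For (i), I would argue by contradiction. If $\cl\simeq\ck_X-g^0_d+\xi^0_e$ is not minimal, some other presentation $\cl\simeq\ck_X-g^0_t+F$ has $\deg F\le e-1$. Applying the mechanism above yields a pencil of degree $d+\deg F\le d+e-1\le\gon(X)-1$, contradicting the gonality. Part (ii) is the same argument sharpened by one: a second minimal presentation is automatically of type $(d,e)$ (since minimality fixes $\deg E$, and then $\deg\cl$ fixes $d$), so Theorem~\ref{lem1} produces a pencil of degree $d+e<\gon(X)$, again impossible.

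For (iii), I would fix $j\in J$ and apply Theorem~\ref{lem1} to the pair $\cl\simeq\ck_X-g^0_d+\xi^0_e$ and $\cl\simeq\ck_X-h^0_{d,j}+\zeta^0_{e,j}$. This identifies $|\co_X(g^0_d+\zeta^0_{e,j})|$ with $|\co_X(h^0_{d,j}+\xi^0_e)|$ and gives $h^0\ge 2$, so a pencil of degree $d+e=\gon(X)$. To upgrade this to \emph{exactly} a $g^1_{\gon(X)}$, I would use the standard point-removal trick: if $h^0\ge 3$, subtracting a general point produces a pencil of degree $\gon(X)-1$, contradicting gonality, whence $h^0=2$. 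For mutual distinctness, I would assume $|\co_X(g^0_d+\zeta^0_{e,i})|=|\co_X(g^0_d+\zeta^0_{e,j})|$ for some $i\ne j$: cancelling $g^0_d$ gives $\zeta^0_{e,i}\simeq\zeta^0_{e,j}$, and since each carries $h^0=1$ by the $\xi^0$-convention, the two coincide as divisors; substituting this back into the two presentations indexed by $i$ and $j$ and using $h^0(\co_X(h^0_{d,\cdot}))=1$ forces $h^0_{d,i}=h^0_{d,j}$ as well, so the presentations coincide.

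The main (and essentially only) step requiring any care is the dimension-exactness in (iii), where one must rule out $h^0\ge 3$ to get a $g^1_{\gon(X)}$ rather than something larger; the remainder is bookkeeping around Theorem~\ref{lem1}, the uniqueness of effective divisors in an $h^0=1$ linear system, and the definition of gonality.
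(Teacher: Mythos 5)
Your proposal is correct and follows essentially the same route as the paper: all three parts are reduced to the pencil $|\co_X(g^0_d+F)|$ of degree $d+\deg F$ produced by Theorem~\ref{lem1}, which the gonality bound then controls. The only (harmless) differences are that you explicitly rule out $h^0\ge 3$ in (iii) by point removal, where the paper simply asserts the linear system is a pencil, and that you justify divisor equality via the $h^0=1$ convention rather than via $d,e<\gon(X)$ as the paper does.
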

\begin{proof}  (i) If $\cl \simeq \ck _X -g^0_d +\xi^0_e$ is not a
minimal presentation, then there is a  $\xi^0_s$ with $ s<e$ such
that $h^0 (X, \co _X  ( g^0_d +\xi^0_s ))\ge 2$  by Theorem
\ref{lem1}. It cannot occur for $d+e\le gon(X)$.

\noindent{(ii)} In case  $d+e<gon(X)$,   Theorem
\ref{lem1} also implies that there is no another
presentation $\cl \simeq \ck _X -h^0_d +\zeta ^0_e$.

\noindent{(iii)} Assume $\cl \simeq \ck _X -g^0_d +\xi^0_e$ with $d+e =\gon (X)$ admits two other different  presentations $\cl \simeq \ck _X -h^0_d +\zeta ^o _e\simeq \ck _X -f^0_d +\tau ^o _e$.   Due to Theorem \ref{lem1},  we have the following:
$$g^0_d +\zeta ^o _e \simeq   h^0_d +\xi ^o _e, ~~ g^0_d +\zeta ^o _e \neq   h^0_d +\xi ^o _e,$$
$$g^0_d +\tau ^o _e \simeq  f^0_d+\xi ^o _e, ~~ g^0_d +\tau ^o _e \neq  f^0_d+\xi ^o _e,$$
and  both   $| \co _X (g^0_d +\zeta ^o _e) |$ and  $| \co _X (g^0_d +\tau ^o _e) |$
are pencils of degree $\gon (X)$.
If   $| \co _X (g^0_d +\zeta ^o _e) |=| \co _X (g^0_d +\tau ^o _e) |,$
then
$$g^0_d +\zeta ^o _e \simeq g^0_d +\tau ^o _e \  \mbox{ and } \   h^0_d +\xi ^o _e \simeq  f^0_d+\xi ^o _e ,$$
whence $\zeta ^o _e =\tau ^o _e$ and  $h^0_d = f^0_d$ since both of $d$ and $e$ are smaller than  $\gon (X)$ by the conditions  that $d+e =\gon (X)$,
$g^0_d > 0$ and  $\xi ^o _e > 0$.  
It is a contradiction to the assumption that $\cl \simeq \ck _X -h^0_d +\zeta ^o _e$ and $\cl \simeq \ck _X -f^0_d +\tau ^o _e$ are distinct. Hence   two pencils $| \co _X (g^0_d +\zeta ^o _e) |$ and  $| \co _X (g^0_d +\tau ^o _e) |$ are different. This gives the result (iii).
 Thus we complete the proof of the theorem.
\end{proof}
\begin{Corollary} Let $X$ be  an $n$-gonal curve. For  $0<e<n$,  choose two distinct divisors $g^0_{n-e} +\zeta ^0_e,   ~ h^0_{n-e} +\xi^0_e \in g^1_n$ with  $(g^0_{n-e}, \xi^0_e )=0$  and $ (h^0_{n-e} , \zeta ^0_e )=0$. Then  we have a nonspecial line bundle $\cl$ with $Ova (\cl) = e-2$ which is distinctly presented by $ \ck _X -h^0_{n-e}+\zeta ^o _e$ and $ \ck _X -g^0_{n-e} +\xi ^0 _e$. Moreover, if $X$ has a unique $g^1_n$, then $\cl \simeq \ck _X -h^0_{n-e}+\zeta ^o _e$ is  the only different minimal presentations from $\cl \simeq \ck _X -g^0_{n-e} +\xi ^0 _e$.
\label{gonpre}
\end{Corollary}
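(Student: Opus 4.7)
The plan is to derive everything from Theorem \ref{corgon} together with Remark \ref{rmknon}(i), after first verifying that the two displayed expressions are distinct legitimate presentations of a single nonspecial line bundle $\cl$.

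For well-definedness, the two divisors $g^0_{n-e} + \zeta^0_e$ and $h^0_{n-e} + \xi^0_e$ belong to the same $g^1_n$ and so are linearly equivalent; rearranging gives $\xi^0_e - g^0_{n-e} \sim \zeta^0_e - h^0_{n-e}$, so twisting $\ck_X$ by this class yields the same line bundle from either side. Each expression is a valid nontrivial presentation: the gcd and effectivity conditions are part of the hypothesis, and $h^0(\co_X(g^0_{n-e})) = h^0(\co_X(h^0_{n-e})) = 1$ because their degree $n-e$ is strictly less than $\gon(X) = n$; consequently $\cl$ is nonspecial. The two presentations are distinct as pairs of divisors, since equality $(g^0_{n-e}, \xi^0_e) = (h^0_{n-e}, \zeta^0_e)$ would force $g^0_{n-e} + \zeta^0_e = h^0_{n-e} + \xi^0_e$, contradicting the assumed distinctness in $g^1_n$. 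Finally, both presentations have $d + e = (n-e) + e = n = \gon(X)$, so Theorem \ref{corgon}(i) implies each is minimal, and Remark \ref{rmknon}(i) then gives $Ova(\cl) = e - 2$.

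For the last assertion, suppose $X$ has a unique $g^1_n$ and let $\ck_X - f^0_a + \tau^0_b$ be any minimal presentation of $\cl$. Minimality combined with Remark \ref{rmknon}(i) forces $b = Ova(\cl) + 2 = e$, and the identity $\deg \cl = 2g - 2 - a + b$ then forces $a = n - e$; so every minimal presentation has type $(n-e, e)$. Now apply Theorem \ref{corgon}(iii) to the fixed presentation $\ck_X - g^0_{n-e} + \xi^0_e$: each other minimal presentation produces a pencil $|g^0_{n-e} + \tau^0_e| = g^1_n$, and these pencils are mutually distinct across different presentations. Since $X$ carries only one $g^1_n$, at most one such further minimal presentation exists, and $\ck_X - h^0_{n-e} + \zeta^0_e$ realizes it.

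The main subtlety lies in the final paragraph: one must first argue that every minimal presentation of $\cl$ is forced into the type $(n-e, e)$, in order to legitimately invoke Theorem \ref{corgon}(iii) and translate the count of alternative minimal presentations into a count of pencils $g^1_n$ on $X$.
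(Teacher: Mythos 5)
Your proposal is correct and follows essentially the route the paper intends: the paper states this as an immediate consequence of Theorem \ref{corgon}, and you derive the minimality and $Ova(\cl)=e-2$ from Theorem \ref{corgon}(i) together with Proposition \ref{thm0}(iv)/Remark \ref{rmknon}(i), and the uniqueness from Theorem \ref{corgon}(iii) plus the uniqueness of the $g^1_n$. Your added observation that every minimal presentation is forced to have type $(n-e,e)$ (so that Theorem \ref{corgon}(iii) really does account for all alternatives) is a detail the paper leaves implicit, and it is handled correctly.
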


\begin{Remark} (i) Whenever we take  arbitrary  $g^0_d$ and $\xi ^o _e$  on  $X$  with $d+e \le gon(X)$ and $(g^0_d , \xi ^o _e )=0$, we obtain a line bundle $\cl \simeq \ck_X -g^0_d +\xi ^o _e$ which  is in  itself  a  minimal presentation, equivalently,   $Ova (\cl ) =e -2$. In particular, if $e \ge 3$(:$\cl$ is very ample) and $d+e < \gon (X)$, then ${\langle \xi^0_e \rangle}_{\cl}$  is a unique $e$-secant $(e- 2)$-plane and has no $s$-secant $(s-2)$-planes for any $s \le e -1$.\\
\noindent{(ii)}  Let $X$ be an $n$-gonal curve with a unique $g^1_n$. For any number $e$ with  $0<e<n$,
  $X$  has infinitely many nonspecial line bundles $\cl$ satisfying   $\deg \cl=2g-2-n+2e$  and $ Ova(\cl )=e-2$  by Corollary \ref{gonpre}, since two different  general divisors $g^0_{n-e} +\zeta ^0_e,   ~ h^0_{n-e} +\xi^0_e \in g^1_n$ satisfy  the conditions $(g^0_{n-e}, \xi^0_e )=0$  and $ (h^0_{n-e} , \zeta ^0_e )=0$.   In the case $e\ge 3$,  the embedded curve $\varphi _{\cl } (X)$ has exactly two $e$-secant $(e-2)$-planes and has no $s$-secant $(s-2)$-planes for $s\le e-1$.\\
\noindent{(iii)} If $\cl \simeq \ck_X -g^0_d +\xi ^o _e$ with $d+e = \gon (X)$,  then Theorem \ref{corgon} (iii) implies  the following inequality:
 \begin{eqnarray*}
& &\# \{   \ e\mbox{-secant }(e-2)\mbox{-planes of } \varphi _{\cl} (X)\   \} \\
& &\le 1 +  \#  \{ \ g^1_{\gon (X)}  \mid h^0 (X, g^1_{gon(X)}(-g^0_d))\ge 1, \ h^0 (X, g^1_{gon(X)}(-\xi^0_e))\ge 1\  \}.
\end{eqnarray*}
\label{rmkcan}
\end{Remark}
We can see  the exactness of  the condition $d+e \le \gon(X)$ in Theorem \ref{corgon} through the following example.
\begin{Example} Let $X$ be a smooth curve.
Choose   two distinct general divisors $g^0_d +\xi ^0 _{e-b} , ~g^0_{d-b} +\xi ^0 _{e} \in g^1 _{\gon (X)}$ with $b>0$. Note that  $d+e =\gon (X) +b>\gon (X)$ and an equivalence $\ck _{X} - g^0_{d-b} +\xi ^0 _{e-b} \simeq \ck _{X} - g^0_{d} +\xi ^0 _{e} $. 
The general choices imply  that $(g^0_d , \xi ^0 _{e} )=0$ and   $(g^0_{d-b} , \xi ^0 _{e-b} )=0$ and hence $\cl \simeq \ck _{X} - g^0_{d} +\xi ^0 _{e} \simeq 
\ck _{X} - g^0_{d-b} +\xi ^0 _{e-b} $ are well defined presentations.  This means that $\cl \simeq \ck _{X} - g^0_{d}  +\xi ^0 _{e} $ is not minimal. 
\label{exgon}
\end{Example}
The following example is comparable to  Remark  \ref{rmkcan}, $(iii)$.
\begin{Example} Let $X$ be an $m$-fold covering of an elliptic curve $\Gamma$ via $\phi :X \to \Gamma$. Assume that $\cl \simeq \ck _X -\phi^ * (P)+\phi^ * (Q)$ for  two  distinct points $P$, $Q\in  \Gamma$.  Then, for an arbitrary  $R \in \Gamma$, there is a point $S\in \Gamma$ such that  $\cl\simeq  \ck _X -\phi^ * (S)+\phi^ * (R)$. Specifically, if $\gon(X) =2m$, then there are  infinitely many $g^1_{gon(X)}$ on $X$ and the line bundle $\cl$ has  infinitely many minimal presentations of  type $(m,m)$ with $2m=\gon(X)$.
\label{egel}
\end{Example}
\begin{proof} Let $R$  be an arbitrary  point of  $ \Gamma$ with $R\neq Q$.  Since $h^0 (\Gamma, \co _{\Gamma} (P+R)) = 2$, we can choose a point $S$ of  $ \Gamma$ such that $P+R \simeq Q+S$. Then we have
 $$\phi^ * (P) +\phi^ * (R ) \simeq \phi^ * (Q)+\phi^ * (S ),$$
which gives   $$\cl \simeq\ck _X -\phi^ * (P) +\phi^ * (Q) \simeq \ck _X-\phi^ * (S ) +\phi^ * (R ).$$ Thus the result follows.
\end{proof}

In addition, 
observe the line bundles of degree $2g-2$ with respect to secant properties, since it is interesting to distinguish properties of  a nonspecial line bundle of that degree from the special one which is canonical.

\begin{Remark} Let $\cl$ be a very ample line bundle with $\deg \cl =2g-2$.\\
\noindent{(i)} 
 The special case:   $\cl$ is the canonical  line bundle $\ck _X$ on a nonhyperelliptic curve $X$, for which the embedded curve $\varphi _{\ck _X} (X)$ has  at least one-dimensional family of $\gon (X)$-secant $(\gon (X)-2)$-planes but has no $s$-secant $(s-2)$-planes for $s \le \gon (X) -1$.

\noindent{(ii)} The nonspecial case:  Let $e$ an arbitrary number  with $3\le e\le \frac{\gon (X) }{2}$  $($resp. $ 3\le e< \frac{\gon (X) }{2})$. By Corollary \ref{gonpre}, $X$ carries very ample nonspecial line bundles $$\cl \simeq \ck_X -g^0_e +\xi ^o _e ,$$  for each of which   $\varphi _{\cl} (X)$ has an (resp. unique )  $e$-secant $(e-2)$-plane  but has no  $s$-secant $(s-2)$-planes for any $s \le e-1$.  Furthermore, if $X$ is a general $k$-gonal, then   the  range of the number $e$ can be  extended  up to $3\le e<\frac{g+1}{4}$(see  Corollary \ref{coro4} and Remark \ref{genrmk}).

\noindent{(iii)} Assume that $X$ is an $m$-fold covering of an elliptic curve $\Gamma$ with $\gon(X) =2m$. Let $\cl \simeq \ck _X -\phi^ * (P)+\phi^ * (Q)$  for  two distinct points $P$, $Q\in  \Gamma$.  Example \ref{egel} implies that the curve $\varphi _{\cl} (X)$ has no $n$-secant $(n-2)$-planes for $n\le m-1$ and has infinitely many $m$-secant $(m-2)$-planes such that
$$\{  \  m\mbox{-secant } (m-2)\mbox{-planes}   \  \} =\{ {  \ \langle \phi^ * (R) \rangle}_{\cl} ~|~ R \in \Gamma    \ \}.$$ Consequently, the curve $\varphi _{\cl} (X)$ lies on an $(m-1)$ dimensional scroll  $S$ over  $\Gamma$.  Moreover,  any two distinct $m$-secant $(m-2)$-planes have no common points,
 since   the Riemann-Roch Theorem gives $\dim {\langle \phi^ * (R_1 +R_2) \rangle}_{\cl}  =2m-3 $ due to $\cl (-R_1 -R_2) \simeq \ck _X  -\phi^ * (P) - \phi^ * (R_2)$ and $h^0 (X, \co _X (\phi ^ * (P) + \phi^ * (R_2)))=2$.
\noindent{(iv)} Assume that $X$ is a  double covering of a smooth curve $Y$ of genus $\gamma$ via $\phi : X \to Y$.  For each $e \le \frac{g- 2\gamma}{2} $, there are  nonspecial line bundles  
  $$\cl \simeq \ck _X -g^0_{e} + \xi^0_e ,$$   
for which   $\varphi _{\cl} (X)$ has an   $e$-secant $(e-2)$-plane  but has no  $s$-secant $(s-2)$-planes for any $s \le e-1$(see Corollary \ref{coro5}). 
\end{Remark}

Note that one of the natural ways to construct nonspecial line bundles on a smooth curve $X$  is  to use a $g^r_d$,  whose existence on $X$  is already well known, such as $\cl \simeq \ck _X -g^r_d +\xi ^0 _e$  for some  $\xi ^0 _e$  on $X$.  In this case, 
 we may  raise a question concerning the minimality of $\cl \simeq \ck _X -g^0_{d-\beta} +\xi ^0 _{e-\beta} $, where 
$g^0_{d-\beta} := g^r_d - (\xi ^0 _e , {\tilde D})$ and $\xi ^0 _{e-\beta}:=\xi ^0 _e -(\xi ^0 _e , {\tilde D})$ for some ${\tilde D}\in g^r_d$  with $\deg  (\xi ^0 _e , {\tilde D}) = \beta$, $\beta =:\mbox{max} \{ \deg (\xi ^0 _e , D) | D \in g^r_d \}$.
The following theorem gives sufficient conditions for the minimality.

\begin{Theorem} \label{submin}
Let $X$ be a smooth curve with a complete $g^r_d$ and  let $\cl$ be a nonspecial line bundle on $X$  given by  $\ck _X -g^r_d +\xi ^0 _e$ for some $\xi ^0 _e$ on $X$. Set $\beta := \mbox{max} \{ \deg (\xi ^0 _e , D) | D \in g^r_d \}$.  
Assume that $\dim |g^r_d +G| =r$ for any  $G\ge 0$ with $\deg G \le  e-\beta -1$.  If  $D \in g^r_d$ satisfies  $\deg (\xi ^0 _e , D)  =\beta$ then   $ \cl \simeq \ck _X -g^0_{d-\beta} + \xi ^0 _{e-\beta}$ is a minimal presentation, where $g^0_{d-\beta}:=g^r_d -(\xi ^0 _e , D),~   \xi ^0 _{e-\beta}:= \xi ^0 _e - (\xi ^0 _e , D)$.
\end{Theorem}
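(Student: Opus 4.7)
The plan is a proof by contradiction, leveraging Theorem \ref{lem1} together with the numerical hypothesis on $|g^r_d + G|$. As a preliminary step I would verify that $\cl \simeq \ck_X - g^0_{d-\beta} + \xi^0_{e-\beta}$ really is a presentation: setting $G := (\xi^0_e, D)$, we have $g^0_{d-\beta} = D - G$ and $\xi^0_{e-\beta} = \xi^0_e - G$, so the equivalence is obtained from $\cl \simeq \ck_X - D + \xi^0_e$ by subtracting $G$ from both sides, and $(g^0_{d-\beta}, \xi^0_{e-\beta}) = 0$ is immediate from the definition of the greatest common divisor. The remaining condition $h^0(X, \co_X(\xi^0_{e-\beta})) = 1$ will then follow from Proposition \ref{thm0}$(v)$ once minimality is established (assuming $d > \beta$; the degenerate case $d = \beta$ gives the trivial presentation).

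For minimality, suppose to the contrary that there is a further presentation $\cl \simeq \ck_X - g^0_s + F$ with $\deg F \le e - \beta - 1$. Applying Theorem \ref{lem1} to the two distinct presentations yields the linear equivalence $g^0_{d-\beta} + F \simeq g^0_s + \xi^0_{e-\beta}$; adding $G$ to each side and using $g^0_{d-\beta} + G = D \in |g^r_d|$ gives $D + F \simeq g^0_s + \xi^0_e$, so $g^0_s + \xi^0_e$ is an effective divisor in $|g^r_d + F|$. The hypothesis $\dim|g^r_d + F| = r$ (applicable since $\deg F \le e - \beta - 1$), combined with the tautological $r$-dimensional inclusion $|g^r_d| + F \subseteq |g^r_d + F|$, forces the equality $|g^r_d + F| = |g^r_d| + F$, which geometrically says that $F$ is a fixed part. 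Consequently we obtain the divisor-level identity $g^0_s + \xi^0_e = D'' + F$ for some $D'' \in |g^r_d|$.

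Now set $G'' := (\xi^0_e, D'')$ and $R := \xi^0_e - G''$. By the maximality defining $\beta$ we have $\deg G'' \le \beta$, so $\deg R \ge e - \beta$, and $R$ is disjoint from $D'' - G''$ by construction of the gcd. Subtracting $G''$ from the divisor equality $g^0_s + \xi^0_e = D'' + F$ gives $g^0_s + R = (D'' - G'') + F$, and a prime-by-prime multiplicity comparison, using $(g^0_s, F) = 0$ and the disjointness of $R$ from $D'' - G''$, forces $R \le F$: at any $P \in \sup(R)$ the divisor $D'' - G''$ vanishes at $P$, so $\mathrm{ord}_P(g^0_s) + \mathrm{ord}_P(R) = \mathrm{ord}_P(F)$, and if $\mathrm{ord}_P(g^0_s) > 0$ this would contradict $(g^0_s, F) = 0$, so $\mathrm{ord}_P(R) = \mathrm{ord}_P(F)$. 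Therefore $e - \beta \le \deg R \le \deg F \le e - \beta - 1$, the desired contradiction.

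I expect the main obstacle to be the middle step, where one must interpret the purely numerical hypothesis $\dim|g^r_d + F| = r$ as the geometric statement that $F$ lies in the fixed locus of $|g^r_d + F|$; this is what unlocks the divisor-level decomposition $g^0_s + \xi^0_e = D'' + F$ and brings the maximality of $\beta$ to bear. The rest of the argument is elementary bookkeeping with gcds of effective divisors.
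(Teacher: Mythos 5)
Your proposal is correct and follows essentially the same route as the paper's proof: assume a presentation with $\deg F \le e-\beta-1$, use the hypothesis $\dim|g^r_d+F|=r$ together with completeness of $g^r_d$ to force $F$ into the fixed part and obtain the divisor-level identity $g^0_s+\xi^0_e=D''+F$ with $D''\in g^r_d$, and then contradict the maximality of $\beta$ by a gcd/degree count. The only cosmetic differences are that you reach the equivalence $D+F\simeq g^0_s+\xi^0_e$ via Theorem \ref{lem1} applied to the reduced presentation and then add back $G$ (the paper equates the two expressions for $\cl$ directly), and your multiplicity-by-multiplicity verification that $\xi^0_e-(\xi^0_e,D'')\le F$ is a slightly more explicit form of the paper's step $\zeta^0_s\le\xi^0_e$, $\xi^0_e-\zeta^0_s\le F'\in g^r_d$.
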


\begin{proof}
 Since $\cl$ is nonspecial,  we obtain $\beta \le e-1$, and  $ \cl \simeq \ck _X -g^0_{d-\beta} + \xi ^0 _{e-\beta}$ is  a minimal presentation in case $\beta = e-1$. 
Assume that  for $\beta \le e-2$ there is another presentation $\cl \simeq \ck _X - h^0 _t +\zeta ^0 _s$ with $s\le e-\beta -1$. This gives  
$\ck _X -g^r_d +\xi ^0 _e \simeq \ck _X - h^0 _t +\zeta ^0 _s$, whence 
 $$|g^r_d +\zeta ^0 _s| = | h^0 _t  + \xi ^0 _e|.$$ 
Since  $\dim |g^r_d +G| =r$ for any  $G\ge 0$ with $\deg G \le e-\beta -1$, we obtain $$| h^0 _t  + \xi ^0 _e| = |g^r_d +\zeta ^0 _s| =g^r_d +\zeta ^0 _s ,$$
which means  that $\zeta ^0 _s \le h^0 _t  + \xi ^0 _e$ and thus  $\zeta ^0 _s \le \xi ^0 _e$ due to $(h^0 _t, \zeta ^0 _s)=0$. From the equality $| h^0 _t  + \xi ^0 _e| =g^r_d +\zeta ^0 _s $ we get  $\xi ^0 _e -\zeta ^0 _s \le F \in g^r_d$ and so  $(\xi ^0 _e -\zeta ^0 _s , F) \le (\xi ^0 _e, F )$, which is contrary to the definition of $\beta$ since $ e-s \ge \beta +1$ for $s\le e-\beta -1$. Thus $ \cl \simeq \ck _X -g^0_{d-\beta} + \xi ^0 _{e-\beta}$ is a minimal presentation.
\end{proof}

On the other hand, there is an example such that the minimality fails when    $\dim |g^r_d +G| >r$ for some  $G\ge 0$ with $\deg G \le e-\beta -1$.

\begin{Example} \label{subex}
Let $X$ be a linearly normal smooth curve of type $(a,b)$ with $a\ge b \ge 2$ on a smooth quadric surface in $\P ^3$.
Choose a subdivisor $\xi ^0 _e$ of a general $H\in | \co _X (1)|$ with $e\ge a +1$. 
Let $\cl \simeq \ck _X -g^1_a +\xi ^0 _e$.  
 Then,  $\mbox{max} \{ \deg (\xi ^0 _e , D) | D \in g^1_a \}$ is equal to one. However the  presentation $\cl \simeq \ck _X -g^0_{a -1} +\xi ^0 _{e-1}$ with $ \xi ^0 _{e-1} :=\xi ^0 _e -P \ge 0$ and $g^0_{a -1}  := g^1_a -P$  is not minimal.
\end{Example}
\begin{proof} The general choice of $H$ implies that $\mbox{max} \{ \deg (\xi ^0 _e , D) | D \in g^1_a \} =1$ and $h^0 (X, \co _X (H-\xi ^0 _e))=1$.  Take a $Q \in X$ with $Q \le ( H-\xi ^0 _e)$. Set $ h^0 _{a+b-e-1} := H-\xi ^0 _e  -Q, ~ \zeta ^0 _{b-1} := g^1_b -Q$. From the equivalence  $ (H-\xi ^0 _e  -Q) +\xi ^0 _e \simeq g^1_a  + ( g^1_b -Q)$  we obtain
  $$\ck _X -g^1_a +\xi ^0 _e \simeq \ck _X -h^0 _{a+b-e-1} +\zeta ^0 _{b-1}.$$ 
This means that $\cl \simeq \ck _X -g^0_{a -1} +\xi ^0 _{e-1}$ is not a  minimal
presentation for $b\le a \le e-1$.
\end{proof}

Note that this example  does not satisfy the hypothesis of Theorem \ref{submin}, since  $\dim |g^1_a +G | =\dim |H-Q|=2$ for $G:=g^1_b -Q$ of degree $b-1 \le e-1-\beta$.

\section{Minimal presentations  on  multiple coverings}

 In this section, we investigate  sufficient conditions for  the minimality of  presentations of  nonspecial line bundles  on  multiple coverings.

 Since every trivial presentation is minimal, we   consider only  nontrivial presentations(: $g^0_d \neq 0$) and so we use a notation $\xi ^0_e$ instead of  $E$ due to Proposition \ref{thm0}, (v). Thus the aim of this section is to explore  sufficient conditions for  the minimality of nontrivial presentations such as $\cl \simeq \ck _X -g^0_{d} +\xi^0_e$  on  multiple coverings.
We also assume $e\ge 2$ which is necessary for the line bundle $ \ck _X -g^0_{d} +\xi^0_e$ to be   globally generated.

First, we  examine necessary conditions for $\cl \simeq \ck _X -g^0_{d} +\xi^0_e$ to be minimal on curves $X$ with $\phi : X \to \P ^1$ which are the simplest coverings to deal with.

\begin{Proposition}
Let  $X$ admit an $n$-fold covering $\phi:X \to \P^1$ and let $ \cl$ be
a nonspecial   line bundle   on $X$.  If  $\cl\simeq \ck_X-g^0_d+\xi^0_e$ is a minimal presentation, then 
$\deg  (g^0_d , \phi ^* (P)) + \deg (\xi^0_e, \phi ^* (Q))\le n$ for  any $P$, $Q \in \P ^1.$
 \label{thmgon}\end{Proposition}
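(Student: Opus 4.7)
The plan is to argue by contradiction. Assume there exist $P,Q\in\P^1$ such that $\deg A+\deg B\ge n+1$, where $A:=(g^0_d,\phi^*(P))$ and $B:=(\xi^0_e,\phi^*(Q))$. The goal is to exhibit a presentation of $\cl$ whose second divisor has degree strictly less than $e$, contradicting the minimality of $\cl\simeq\ck_X-g^0_d+\xi^0_e$.

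The core observation is that any two fibres of $\phi$ are linearly equivalent, so $\phi^*(P)\simeq\phi^*(Q)$. Adding the trivial difference $\phi^*(P)-\phi^*(Q)$ to $-g^0_d+\xi^0_e$ and regrouping yields
\[
\cl\simeq\ck_X-D'+F',\quad D':=(g^0_d-A)+(\phi^*(Q)-B),\ \ F':=(\xi^0_e-B)+(\phi^*(P)-A),
\]
with $D',F'\ge 0$ (since $A\le g^0_d$, $A\le\phi^*(P)$, $B\le\xi^0_e$, $B\le\phi^*(Q)$). A direct degree count gives $\deg F'=e+n-\deg A-\deg B\le e-1$. To see $F'>0$, note that $\deg F'=0$ would force $\deg A=n$ and $\deg B=e$, hence $A=\phi^*(P)\le g^0_d$, whence $h^0(\co_X(g^0_d))\ge h^0(\co_X(\phi^*(P)))\ge 2$, contradicting $h^0(\co_X(g^0_d))=1$.

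The remaining step is to upgrade the pair $(D',F')$ to a bona fide presentation. I would let $F^*\ge 0$ be of minimum degree among divisors satisfying $h^0(\ck_X\otimes\cl^{-1}(F^*))\ge 1$; this set contains $F'$ (via the section coming from $D'$), so $\deg F^*\le\deg F'\le e-1$, and $F^*>0$ because $\cl$ is nonspecial. Writing $\ck_X\otimes\cl^{-1}(F^*)\simeq\co_X(D^*)$ with $D^*\ge 0$, any nontrivial common part of $D^*$ with $F^*$---or a base-point reduction of $|D^*|$ in the spirit of Proposition \ref{thm0}(i)---would either shrink $\deg F^*$, contradicting its minimality, or force $\cl$ to be special. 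Hence $(D^*,F^*)=0$ and $h^0(\co_X(D^*))=1$ (or $D^*=0$, giving a trivial presentation), so $\cl\simeq\ck_X-g^0_{t^*}+F^*$ is a valid presentation with $\deg F^*<e$, the desired contradiction.

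I expect the heart of the argument to be the single identity displayed above, which exploits fibre equivalence to cancel $A$ against $\phi^*(P)$ and $B$ against $\phi^*(Q)$ simultaneously. The main obstacle is only bookkeeping, ensuring that the $g^0$ hypothesis on $g^0_d$ is exactly what rules out the degenerate case $F'=0$, and that the passage from $(D',F')$ to a genuine presentation does not raise the degree; both verifications are routine given the results of Section 2.
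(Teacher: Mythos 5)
Your argument is correct and follows essentially the same route as the paper's proof: both exploit the fibre equivalence $\phi^*(P)\simeq\phi^*(Q)$ to trade $(g^0_d,\phi^*(P))$ and $(\xi^0_e,\phi^*(Q))$ for their complements in the respective fibres, then reduce the resulting pair of effective divisors to a genuine presentation and invoke minimality (the paper phrases this as $\deg(D+E)\le\deg(D'+E')$ plus $\deg\phi^*(P+Q)=2n$ rather than as a direct contradiction, but the content is identical). Your explicit checks that $F'>0$ and $F^*>0$ are a small amount of extra care that the paper leaves implicit.
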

\begin{proof} Let $\cl\simeq \ck_X-g^0_d+\xi^0_e$ is a minimal presentation. Suppose that $(g^0_d , \phi ^* (P) ):=D>0$ and $(\xi^0_e, \phi ^* (Q)):=E >0$ for $P,Q \in \P ^1$.  
The equivalence $\phi ^* (P ) \simeq \phi ^* (Q )$ gives
$$\ck_X-g^0_d+\xi^0_e \simeq  \ck _X-g^0_d +D-(\phi ^* (Q ) -E) +\xi ^0_e-E+( \phi ^* (P )-D).$$ If we set
$E' : = \phi ^* (P ) -D, ~~ D' := \phi ^* (Q ) -E,$ then we have $$\cl \simeq  \ck _X-(g^0_d -D +D') +(\xi ^0 _e -E +E'),$$  with $g^0_d -D +D' \ge 0 \mbox{ and } \xi ^0 _e -E +E' \ge 0$. Take a divisor $B\ge 0$ such that 
$|g^0_d -D +D' -B | =:g^0_t$ and $ | \xi ^0 _e -E +E'  -B| :=\xi ^0_s $ with 
$(g^0_t, \xi ^0_s)=0$. This gives  a presentation 
$$\cl \simeq  \ck _X-g^0_t +\xi ^0_s ~ \mbox{ with } s+t \le d+e -\deg (D +E) +\deg (D' +E'),$$
whence the minimality of  $\cl\simeq \ck_X-g^0_d+\xi^0_e$ implies $\deg (D +E) \le \deg (D' +E') .$ This  yields $\deg (D +E) \le n$ since $\phi ^* (P +Q)=D+D'+E+E'$.
%
%
 Accordingly the theorem is verified.
\end{proof}

In fact, the  conclusion that $\deg  (g^0_d , \phi ^* (P)) + \deg (\xi^0_e, \phi ^* (Q))\le n$ for any $P,Q$ of the base curve  also becomes a sufficient condition for the minimality of $\cl\simeq \ck_X-g^0_d+\xi^0_e$ on multiple coverings in some restricted  range of $d+e$ as follows.

\begin{Theorem}\label{mainthm}
Assume  that $X$ admits an  $n$-fold covering morphism $\phi :X \to Y$ for
smooth curve $Y.$    Choose  $g^0_{d}$ and $\xi^0_e$  on $X$ with $ ( g^0_d , \xi^0_e )=0$, $e\ge 2$  and  $d+e \le \mu ,$ where $ \mu := \mbox{min}\{ \deg \cn ~| ~\cn :  \mbox{globally generated and not composed with }\phi  \}$.
If $\deg (g^0_d , \phi ^* (P)) + \deg (\xi^0_e, \phi ^* (Q))\le n$ for  any  $P$, $Q \in Y$,  then we have a nonspecial line bundle $\cl\simeq \ck_X-g^0_d+\xi^0_e$ which  is   in itself a minimal presentation.
\end{Theorem}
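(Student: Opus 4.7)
The plan is to argue by contradiction. Suppose $\cl \simeq \ck_X - g^0_d + \xi^0_e$ is not minimal. Then there exists a second presentation $\cl \simeq \ck_X - g^0_t + F$ with $\deg F < e$ (equivalently $t < d$, from $d + \deg F = t + e$). By Theorem \ref{lem1}, the effective divisors $g^0_d + F$ and $g^0_t + \xi^0_e$ are linearly equivalent yet distinct, whence $h^0(X, \co_X(g^0_d + F)) \ge 2$, while $\deg(g^0_d + F) = t + e < d + e \le \mu$.

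Next I would exploit $\mu$ to force a pullback structure. Let $B$ be the fixed component and $|M|$ the mobile part of $|\co_X(g^0_d + F)|$. Then $\co_X(M) \simeq \co_X(g^0_d + F - B)$ is globally generated of degree $\deg M \le t + e < \mu$, so by the very definition of $\mu$ its complete linear system must be composed with $\phi$; hence every divisor in $|\co_X(M)|$ is a pullback $\phi^*(D)$ for some effective $D$ on $Y$. We therefore write $g^0_d + F = B + \phi^*(D_1)$ and $g^0_t + \xi^0_e = B + \phi^*(D_2)$ with $D_1, D_2$ distinct nonzero effective divisors on $Y$. For $P \in \mbox{supp}(D_1)$ the containment $\phi^*(P) \le g^0_d + F$ forces $a_x + f_x \ge e_x$ at every $x \in \phi^{-1}(P)$ (with $a_x, f_x, e_x$ the multiplicities of $g^0_d, F, \phi^*(P)$ at $x$); the elementary inequality $\min(a_x, e_x) + \min(f_x, e_x) \ge \min(a_x + f_x, e_x) = e_x$, summed over the fiber, yields $\deg(g^0_d, \phi^*(P)) + \deg(F, \phi^*(P)) \ge n$, and the symmetric argument for $Q \in \mbox{supp}(D_2)$ gives $\deg(\xi^0_e, \phi^*(Q)) + \deg(g^0_t, \phi^*(Q)) \ge n$.

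The main obstacle is converting these fiberwise estimates into a contradiction with the hypothesis. Merely adding the two estimates and subtracting the hypothesis $\deg(g^0_d, \phi^*(P)) + \deg(\xi^0_e, \phi^*(Q)) \le n$ gives only $\deg(F, \phi^*(P)) + \deg(g^0_t, \phi^*(Q)) \ge n$, which alone is not absurd. To close the argument I would exploit the disjointness conditions $(g^0_d, \xi^0_e) = 0$ and $(g^0_t, F) = 0$ together with the pointwise balance $a_x + f_x - t_x - \xi_x = (m^{(1)}_{\phi(x)} - m^{(2)}_{\phi(x)}) e_x$ (where $m^{(i)}_P := \deg_P(D_i)$) coming from the two pullback decompositions. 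Picking $P \in \mbox{supp}(D_1) \setminus \mbox{supp}(D_2)$ and $Q \in \mbox{supp}(D_2) \setminus \mbox{supp}(D_1)$, which exist since $D_1 \neq D_2$ have equal degree, the disjointness leaves only a short list of admissible configurations for $(a_x, f_x, t_x, \xi_x)$ on $\phi^{-1}(P) \cup \phi^{-1}(Q)$; a direct accounting on these configurations, together with the degree identity $\deg B = t + e - n\deg D_1$ and the strict inequality $t < d$, would then promote the earlier weak bounds to $\deg(g^0_d, \phi^*(P)) + \deg(\xi^0_e, \phi^*(Q)) > n$, producing the desired contradiction.
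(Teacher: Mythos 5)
Your setup is exactly the paper's and is correct as far as it goes: assume a second presentation $\cl \simeq \ck_X - g^0_t + F$ with $s := \deg F \le e-1$ (hence $t \le d-1$), use Theorem \ref{lem1} to get $h^0(X, \co_X(g^0_d+F)) \ge 2$ with $\deg(g^0_d+F) = d+s < \mu$, and conclude from the definition of $\mu$ that $g^0_d + F = B + \phi^*(D_1)$ and $g^0_t + \xi^0_e = B + \phi^*(D_2)$ for a common base locus $B$ and effective divisors $D_1 \neq D_2$ on $Y$. The gap is in the endgame. A first, repairable, error: points $P \in \mbox{supp}(D_1)\setminus\mbox{supp}(D_2)$ need not exist (take $D_1 = 2P_0+Q_0$, $D_2 = P_0+2Q_0$: distinct, equal degree, equal support). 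The serious problem is that the deferred ``direct accounting'' is the entire content of the proof, and the single-pair local strategy it describes cannot succeed: nothing forces any one pair $(P,Q)$ with $P \in \mbox{supp}(D_1)$, $Q \in \mbox{supp}(D_2)$ to violate the hypothesis. For instance, with $\phi$ unramified over $P$ and $Q$, $P \notin \mbox{supp}(D_2)$, $Q \notin \mbox{supp}(D_1)$, and $B$ disjoint from both fibers, the configuration $\phi^*(P) \le F$ (so $g^0_d$, $g^0_t$, $\xi^0_e$ all avoid $\phi^{-1}(P)$) and $\phi^*(Q) \le g^0_t$ satisfies all your multiplicity balances and both of your fiberwise lower bounds with equality, yet gives $\deg(g^0_d,\phi^*(P)) + \deg(\xi^0_e,\phi^*(Q)) = 0 \le n$; the excess of $F$ over $P$ and of $g^0_t$ over $Q$ is simply paid for on other fibers. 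So no contradiction can be extracted at a single pair of points.

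What is actually needed is a global degree count built on a finer fiberwise splitting than your $\min$-inequality. Writing $D_1 = P_1 + \cdots + P_m$ and $D_2 = Q_1 + \cdots + Q_m$, split each fiber exactly: $\phi^*(P_i) = D_i + E'_i$ with $D_i \le g^0_d$, $E'_i \le F$, and $\phi^*(Q_j) = D'_j + E_j$ with $D'_j \le g^0_t$, $E_j \le \xi^0_e$. Since $D_i \le (g^0_d, \phi^*(P_i))$ and $E_j \le (\xi^0_e, \phi^*(Q_j))$, the hypothesis gives $\deg D_i + \deg E_j \le n$, hence $\deg D'_j + \deg E'_i = 2n - \deg D_i - \deg E_j \ge \deg D_i + \deg E_j$; summing over $i=j$ shows that the mobile part of $g^0_t + F$ has degree at least that of the mobile part of $g^0_d + \xi^0_e$. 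On the other hand the two disjointness conditions $(g^0_d,\xi^0_e)=0$ and $(g^0_t,F)=0$ force the part of $B$ lying in $g^0_d$ to coincide with the part lying in $g^0_t$, and likewise for $\xi^0_e$ and $F$; hence the mobile parts differ in degree by $(d-t)+(e-s) \ge 2$ in the opposite direction. That global comparison is the contradiction, and your sketch never reaches it.
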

\begin{proof}
Suppose that $\cl \simeq \ck _X -g^0_{d} + \xi^0_e$ has another
presentation $\cl \simeq \ck _X -h^0_{t} + \zeta^0_s$ with $s\le e-1$. The condition $s\le e-1$ also implies $t\le d-1$ since  $d-e =t-s$.
According to Theorem \ref{lem1}  we have
$$g^0_d +\zeta ^o _s \simeq   h^0_t +\xi ^o _e, ~~ g^0_d +\zeta ^o _s \neq   h^0_t +\xi ^o _e, ~~h^0 (X,\co _X (g^0_{d} + \zeta^0_s )) \ge 2, $$
whence
$$|g^0_{d} + \zeta^0_s  |= |g^0_{t} +
 \xi^0_e |= \phi^* ( g^r_m ) +B, ~ r\ge 1, ~B: \mbox{  base locus},$$
since $d+s < d+e \le \mu $.
Then we have the  following  decompositions:
\begin{align}\label{1-1}
&g^0_{d}=\Lambda  +B_1 ,~~~  h^0_{t}=\Lambda ' +B' _1,\\
&\zeta^0_s =\Sigma '  +B' _2 ,~~~\xi^0_e =\Sigma +B _2,
\nonumber
\end{align}
such that
$$ | \Lambda  +\Sigma '  |=| \Lambda ' +\Sigma |=\phi^* ( g^r_m ) , $$
$$B=B_1+B' _2=B' _1+B _2,$$
for some effective divisors
$\Lambda,~ \Lambda ' ,~ \Sigma , ~ \Sigma ' $, $B_k$, $B' _k$ $k =1,2$.
Thus there are  points $P_i $, $Q_j \in Y$, $i, j=1,...,m$  such that $$\Lambda  + \Sigma '  = \phi ^* ( P_1 +...+P_m ) \mbox{ and  } \Lambda ' +\Sigma  = \phi ^* ( Q_1 +...+Q_m ).$$
Accordingly, for each $i, j\in\{ 1,...,m \}$ we can set
$$ D_i +E' _i = \phi ^* ( P_i)  ~ \mbox{ for some } D_i \le \Lambda   ,  ~ E'_i \le \Sigma '  ,$$
$$ D ' _j +E _j  = \phi ^* ( Q_j)    ~\mbox{ for some } D'_j \le \Lambda '  ,  ~ E_j \le \Sigma .$$
Thus  the hypothesis on $g^0_d+\xi^0_e$  in the theorem gives
 $$ \deg (D_i +E _j ) \le \deg (g^0_d , \phi ^* (P_i)) + \deg (\xi^0_e, \phi ^* (Q_j))\le n,$$ whence $$\deg (D'_j +E' _i )\ge n \ge\deg (D_i +E _j ) $$
due to  $ D_i +E_j + D'_j +E'_i = \phi ^* (P_i +Q_j ) $. This implies
\begin{equation}  \label {*}\deg (\Lambda ' + \Sigma '  ) \ge \deg (\Lambda  + \Sigma ) , \end{equation}
since  $\Lambda  + \Sigma =\sum ^n_{i=1} D_i +\sum ^n_{j=1}E _j $ and $\Lambda ' + \Sigma ' =\sum ^n_{j=1} D'_j +\sum ^n_{i=1} E' _i $.

On the other hand, because  $B$ is a base locus,  we have $B=B_1+B' _2=B' _1+B _2$ as divisors, whence
$$B' _1 = B_1, ~~~B '_2 =B _2 ~~~~ \mbox{ as divisors } $$ by the conditions $ ( g^0_d , \xi^0_e )=0$ and $ ( h^0_t , \zeta^0_s )=0$.  Accordingly, by (\ref{1-1}) we obtain
$$\deg \Lambda  =d- \deg B_1 \ge 1+  t -\deg B' _1 =1+ \deg \Lambda ' $$
$$\deg \Sigma =e- \deg B_2 \ge  1+ s -\deg B' _2 =1+ \deg \Sigma '  $$ for   $d\ge t+1$  and $e\ge s+1$.  This gives that  $$\deg (\Lambda  + \Sigma ) \ge 2+ \deg (\Lambda ' + \Sigma '  ),$$
which is contrary to (\ref{*}).  Thus $\cl \simeq \ck _X -g^0_{d} + \xi^0_e$ is a minimal presentation.
\end{proof}
\begin{Remark}  Whenever we take  general $g^0_d$ and $\xi^0_e$ on a multiple covering $X$ with  $d+e \le \mu$ and  $(g^0_d,\xi^0_e )=0$,  we obtain a nonspecial line bundle $\cl \simeq \ck _X -g^0_{d} +\xi^0_e$ which
 is in  itself a minimal presentation since  the general choices of $g^0_d$ and $\xi^0_e$ imply $\deg (g^0_d , \phi ^* (P)) + \deg (\xi^0_e, \phi ^* (Q))\le n$ for  any $P$, $Q \in Y$.
\label{genrmk}
\end{Remark}

 \begin{Corollary} Let $X$ be a general $n$-gonal curve of genus $g\ge 4$ via
 $\phi :X \rightarrow \P^1 $.  Choose  $g^0_{d}$ and  $ \xi^0_e$ on X   with $ (g^0_d , \xi ^0_e) =0$ and $d+e \le \frac{g+3}{2}$. Then $\cl \simeq \ck _X-g^0_{d} + \xi^0_e$ is in  itself a minimal presentation if  and only if $\deg (g^0_d , \phi ^* (P)) + \deg (\xi^0_e, \phi ^* (Q))\le n$ for  any $P$, $Q \in \P ^1$.
\label{coro4}
\end{Corollary}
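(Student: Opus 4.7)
The plan is to obtain Corollary \ref{coro4} as a direct combination of Proposition \ref{thmgon} (for the forward direction) and Theorem \ref{mainthm} (for the converse), once the hypothesis $d+e \le \frac{g+3}{2}$ is seen to be strong enough to invoke the latter on a general $n$-gonal curve $\phi : X \to \mathbb{P}^1$.

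For the necessity (``only if''), I would simply apply Proposition \ref{thmgon} with $Y = \mathbb{P}^1$. That proposition is already stated for an arbitrary $n$-fold covering of $\mathbb{P}^1$ and, under the assumption that $\cl \simeq \ck_X - g^0_d + \xi^0_e$ is a minimal presentation, yields exactly the inequality $\deg(g^0_d,\phi^*(P)) + \deg(\xi^0_e,\phi^*(Q)) \le n$ for every $P,Q \in \mathbb{P}^1$; no further work is needed for this direction.

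For the sufficiency (``if''), I would invoke Theorem \ref{mainthm} with $Y = \mathbb{P}^1$. The hypothesis $(g^0_d,\xi^0_e)=0$ and the intersection inequality are given, and $e\ge 2$ is forced by $\cl$ being globally generated (Proposition \ref{thm0}\,(vii)); so the only input not literally supplied by the corollary's assumptions is the numerical bound $d+e \le \mu$, where $\mu$ is the minimal degree of a globally generated line bundle on $X$ not composed with $\phi$. This is where the genericity of $X$ among $n$-gonal curves of genus $g\ge 4$ enters: as recorded in the introduction, for a general $n$-gonal curve one has $\mu > \frac{g+1}{2}$. Since both $\mu$ and $d+e$ are integers, this strict inequality and $d+e \le \frac{g+3}{2}$ together force $d+e \le \mu$ regardless of the parity of $g$ (if $g$ is odd, then $\mu \ge \frac{g+3}{2} \ge d+e$; if $g$ is even, then integrality of $d+e$ gives $d+e \le \frac{g+2}{2} \le \mu$). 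Theorem \ref{mainthm} now produces the minimality of $\cl \simeq \ck_X - g^0_d + \xi^0_e$, completing the converse.

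The only genuine obstacle is the arithmetic bookkeeping around the bound $\mu > \frac{g+1}{2}$ for general $n$-gonal curves; once this is available (as asserted in the introduction, stemming from Ballico--Keem-type results on the next gonality), the corollary is a mechanical consequence of Proposition \ref{thmgon} and Theorem \ref{mainthm}.
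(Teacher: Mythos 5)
Your proposal is correct and follows essentially the same route as the paper: the ``only if'' direction is exactly Proposition \ref{thmgon}, and the ``if'' direction is Theorem \ref{mainthm} once one knows (via Arbarello--Cornalba's result, which is what underlies the bound $\mu>\frac{g+1}{2}$ quoted in the introduction) that every globally generated line bundle of degree at most $\frac{g+1}{2}$ on a general $n$-gonal curve is composed with $\phi$, so that $d+e\le\frac{g+3}{2}$ together with integrality gives $d+e\le\mu$. Your parity bookkeeping is in fact slightly more explicit than the paper's own one-line deduction.
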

\begin{proof} Assume $\cl \simeq \ck _X-g^0_{d} + \xi^0_e$ with $\deg (g^0_d , \phi ^* (P)) + \deg (\xi^0_e, \phi ^* (Q))\le n$ for  any $P$, $Q \in \P ^1$.  According to Theorem  (2.6)  in \cite {AC}, a general $n$-gonal curve $X$ has a unique $g^1_n$ and any globally generated line bundle $\cm$ on $X$ with $\deg \cm \le \frac{g+1}{2}$ is composed with the $n$-fold covering morphism associated to the $g^1_n$. Hence   Theorem  \ref{mainthm} 	implies  that $\cl \simeq \ck _X-g^0_{d} + \xi^0_e$ is a minimal presentation  for $d+e \le \frac{g+3}{2}$. The converse trivially comes from Proposition \ref{thmgon}.
\end{proof}

Specifically,  we obtain the following  for a simple covering $\phi : X  \to Y$, since  $\mu \ge  \frac{g(X)-ng(Y)}{n-1} +1$ by the Castelnuovo-Severi inequality.
\begin{Corollary}  Let a smooth curve $X$ genus $g \ge 2$ admit  a simple $n$-fold covering morphism $\phi : X\to Y$ for a smooth curve  $Y$ of genus $\gamma$. If  $g^0_d$ and $\xi^0_e$ on $X$ with $d+e \le  [\frac{g-n\gamma}{n-1}] +1$ satisfy that $(g^0_d,\xi^0_e) =0$ 
and $\deg (g^0_d , \phi ^* (P)) + \deg (\xi^0_e, \phi ^* (Q))\le n$ for  any $P$, $Q \in Y$, then we obtain a nonspecial line bundle  $\cl \simeq \ck _X-g^0_{d} + \xi^0_e$  which is in
  itself a minimal presentation. Specifically, for a  double covering case  the presentation
  $\cl \simeq \ck _X -g^0_{d} + \xi^0_e$    with $d+e \le g- 2\gamma  +1$   is    minimal   if $\deg (g^0_d  , \phi ^* (Q))\le 1$ and $\deg (\xi^0_e , \phi ^* (Q))\le 1$ for any $Q \in Y$.
 \label{coro5}
\end{Corollary}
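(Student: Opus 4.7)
The plan is to derive this corollary directly from Theorem~\ref{mainthm} by bounding the invariant $\mu$ from below via the Castelnuovo--Severi inequality. First I would observe that in computing
$$\mu = \min\{\deg \cn \mid \cn \text{ globally generated and not composed with } \phi\}$$
it suffices to take the minimum over base-point-free pencils not composed with $\phi$, since a general pencil inside any globally generated linear system $|\cn|$ of higher dimension is still base-point-free, has the same degree, and remains not composed with $\phi$.

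Next I would apply Castelnuovo--Severi. Let $\cn$ be a base-point-free pencil of degree $m$ on $X$ not composed with $\phi$, and let $\psi\colon X \to \P^1$ be the associated degree-$m$ morphism. The simplicity of $\phi$ together with the fact that $\psi$ is independent of $\phi$ ensures that the product map $\phi \times \psi\colon X \to Y \times \P^1$ is birational onto its image: any nontrivial common factorization would yield an intermediate curve $W$ with $X \to W \to Y$ through which $\phi$ factors, contradicting simplicity (the alternative $W = Y$ would force $\cn$ to be composed with $\phi$). The Castelnuovo--Severi inequality then gives
$$g \le n\gamma + m \cdot 0 + (n-1)(m-1),$$
hence $m \ge \frac{g - n\gamma}{n-1} + 1$. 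Taking the infimum over all admissible $\cn$ yields
$$\mu \ge \frac{g - n\gamma}{n-1} + 1 \ge \left[\frac{g - n\gamma}{n-1}\right] + 1.$$

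With this bound in hand, the first assertion is immediate: the hypothesis $d+e \le [\frac{g-n\gamma}{n-1}]+1$ guarantees $d+e \le \mu$, so Theorem~\ref{mainthm} applies verbatim under the given pointwise condition on $\deg(g^0_d, \phi^*(P)) + \deg(\xi^0_e, \phi^*(Q))$, producing the desired minimal presentation $\cl \simeq \ck_X - g^0_d + \xi^0_e$. For the double cover specialization I would set $n=2$, which reduces the degree bound to $d + e \le g - 2\gamma + 1$; and the hypotheses $\deg(g^0_d, \phi^*(Q)) \le 1$, $\deg(\xi^0_e, \phi^*(Q)) \le 1$ trivially imply
$$\deg(g^0_d, \phi^*(P)) + \deg(\xi^0_e, \phi^*(Q)) \le 2 = n$$
for all $P, Q \in Y$, so the specialization again follows from Theorem~\ref{mainthm}. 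The only potentially delicate step is the birationality of $\phi \times \psi$; this is where simplicity of $\phi$ is essential, and would be the main point to verify carefully in the write-up.
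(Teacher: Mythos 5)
Your proposal is correct and follows exactly the route the paper intends: the paper states this corollary with only the one-line justification that $\mu \ge \frac{g-n\gamma}{n-1}+1$ by the Castelnuovo--Severi inequality, and then Theorem \ref{mainthm} applies. Your elaboration of why simplicity of $\phi$ forces $\phi\times\psi$ to be birational onto its image (and the reduction to base-point-free pencils) correctly fills in the details the paper leaves implicit.
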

\begin{Corollary} Let  $X$ be an $n$-fold covering of $\P^1$ via $\phi : X \to \P ^1$ and $\mu$ be the same as in Theorem \ref{mainthm}. Assume that  $\xi ^0_{e+r}$ satisfies $\deg(\xi ^0_{e+r}, \phi ^* (Q)) \le 1$ and $\phi (P_1) \neq \phi (P_2 )$ for any $Q\in \P ^1$ and  $P_1 +P_2 \le \xi ^0_{e+r}$.
 If $\mbox{max} \{rn, rn-r+e \} < \mu$ then $\cl\simeq   \ck _X -rg^1_n +\xi ^0_{e+r}$ is a nonspecial line bundle minimally presented by $\cl\simeq   \ck _X -g^0_{rn-r} +\xi ^0_{e}$, where $g^0_{rn-r} := rg^1_n (-\xi ^0_{r})$, $\xi ^0_{e} :=\xi ^0_{e+r}-\xi ^0_{r}$ for some $\xi ^0_{r}\le \xi ^0_{e+r}$.
\label{corgonal}
\end{Corollary}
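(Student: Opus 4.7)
The plan is to realize $\cl$ concretely via the presentation $\ck_X - g^0_{rn-r} + \xi^0_e$ asserted in the statement and then apply Theorem \ref{mainthm}. I would start by choosing any sub-divisor $\xi^0_r = P_1 + \cdots + P_r$ of $\xi^0_{e+r}$ of degree $r$. By the hypotheses on $\xi^0_{e+r}$, all support points of $\xi^0_{e+r}$ are reduced and map to distinct points of $\P^1$; in particular the images $Q_j := \phi(P_j)$ are pairwise distinct. Setting
$$D := \phi^*(Q_1) + \cdots + \phi^*(Q_r) \in |rg^1_n|,$$
one has $D \ge \xi^0_r$, and $g^0_{rn-r} := D - \xi^0_r$, $\xi^0_e := \xi^0_{e+r} - \xi^0_r$ satisfy $\ck_X - g^0_{rn-r} + \xi^0_e \simeq \ck_X - D + \xi^0_{e+r} \simeq \cl$.

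Next, I would verify that this is an admissible presentation. The gcd $(g^0_{rn-r}, \xi^0_e) = 0$ because the support of $\xi^0_e$ lies over $\P^1 \setminus \{Q_1, \ldots, Q_r\}$, while $g^0_{rn-r}$ is supported over $\{Q_1, \ldots, Q_r\}$. For $h^0(\O_X(g^0_{rn-r})) = 1$, I would first establish the structural identity $|rg^1_n| = \phi^*|\O_{\P^1}(r)|$: since $\O_X(rg^1_n)$ is globally generated of degree $rn < \mu$, it is composed with $\phi$ by the defining property of $\mu$, meaning $\O_X(rg^1_n) = \phi^*\cm$ with all sections pulled back from $\cm$; the degree forces $\cm = \O_{\P^1}(r)$ and hence $h^0(\O_X(rg^1_n)) = r+1$. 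Now any $E \in |g^0_{rn-r}|$ gives $E + \xi^0_r \in |D| = \phi^*|\O_{\P^1}(r)|$, so $E + \xi^0_r = \phi^*(D')$ for some $D' \in |\O_{\P^1}(r)|$; the constraints $P_j \le E + \xi^0_r$ together with the distinctness of the $Q_j$ force $D' = Q_1 + \cdots + Q_r$, hence $E = g^0_{rn-r}$.

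With the presentation in place, I would apply Theorem \ref{mainthm}. The degree inequality $(rn - r) + e < \mu$ is the second half of the hypothesis $\max\{rn, rn-r+e\} < \mu$. For any $P, Q \in \P^1$, the decomposition $g^0_{rn-r} = \sum_j (\phi^*(Q_j) - P_j)$ yields $\deg(g^0_{rn-r}, \phi^*(P)) \le n - 1$ (attained only for $P \in \{Q_j\}$), and by hypothesis $\deg(\xi^0_e, \phi^*(Q)) \le \deg(\xi^0_{e+r}, \phi^*(Q)) \le 1$; the sum is at most $n$. Theorem \ref{mainthm} then gives the minimality of $\ck_X - g^0_{rn-r} + \xi^0_e$.

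The main obstacle will be the structural identification $|rg^1_n| = \phi^*|\O_{\P^1}(r)|$, which simultaneously guarantees $h^0(\O_X(g^0_{rn-r})) = 1$ and the uniqueness of the decomposition of $E + \xi^0_r$ as a fiber sum; all remaining verifications reduce to direct comparison of supports and degrees over $\P^1$ and invoke Theorem \ref{mainthm} as a black box.
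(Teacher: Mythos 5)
Your proposal is correct and follows essentially the same route as the paper: both use $rn<\mu$ to identify $|rg^1_n|$ with $\phi^*|\co_{\P^1}(r)|$ (hence $h^0(X,\co_X(rg^1_n(-\xi^0_r)))=1$ and $(g^0_{rn-r},\xi^0_e)=0$), and then invoke Theorem \ref{mainthm} via the fiber bound $\deg(g^0_{rn-r},\phi^*(P))+\deg(\xi^0_e,\phi^*(Q))\le (n-1)+1=n$. Your write-up merely spells out more explicitly the uniqueness argument that the paper leaves implicit.
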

\begin{proof} Note that  $|rg^1_n |=g^r_{rn}$ for $rn < \mu$ and thus $h^0 (X, \co _X (rg^1_n (-\xi ^0_{r})) =1$  for $\xi ^0_{r}\le \xi ^0_{e+r}$ due to the condition that $\deg(\xi ^0_{e+r}, \phi ^* (Q)) \le 1$ for any $Q\in \P ^1$. Thus   we can set $g^0_{rn-r} := rg^1_n (-\xi ^0_{r})$, which satisfies that $(g^0_{rn-r} , \xi ^0_{e})=0$ for $\xi ^0_{e} :=\xi ^0_{e+r}-\xi ^0_{r}$. Accordingly,  $\cl\simeq   \ck _X -rg^1_n +\xi ^0_{e+r}$ admits a well defined presentation $\cl \simeq   \ck _X -g^0_{rn-r} +\xi ^0_{e}$.  Its minimality comes from Theorem \ref{mainthm}, since  $\deg (g^0_{rn-r} , \phi ^* (Q)) \le n-1$, $\deg(\xi ^0_{e+r}, \phi ^* (Q)) \le 1$ and $\phi (P_1) \neq \phi (P_2 )$ for any $Q\in \P ^1$ and  $P_1 +P_2 \le \xi ^0_{e+r}$.
\end{proof}

\begin{Remark} Let $X$ be an $n$-gonal curve of genus $g$ via $\mu : X \to \P ^1$ and $\mu$ be the same as in Theorem \ref{mainthm}. Assume that $\mbox{max} \{rn, rn-kr+e \} < \mu$. Choose a $\xi ^0_{e+kr}$ with $k\ge 1$ such that $\deg(\xi ^0_{e+kr}, \phi ^* (Q_i)) =k$ for distinct $Q_1, ..., Q_r \in \P ^1$  and $\deg (\xi ^0_{e+kr}, \phi ^* (Q)) \le k$ for any $Q \neq Q_i$, $i=1,...,r$.
Let $\xi ^0_{kr}:=\sum _{i=1} ^{r} (\xi ^0_{e+kr}, \phi ^* (Q_i))$.  Then, we have $h^0 (X, \co _X (rg^1_n (-\xi ^0_{kr})) =1$ since $|rg^1_n |=g^r_{nr} =|\phi ^* (\sum _{i=1} ^r Q_i ) |$.
Set $g^0_{rn-kr} := rg^1_n (-\xi ^0_{kr})$ and $\xi ^0_{e} :=\xi ^0_{e+kr}-\xi ^0_{kr}$.  Then $\cl\simeq   \ck _X -rg^1_n +\xi ^0_{e+kr}$ is a nonspecial line bundle  minimally presented by $\cl\simeq   \ck _X -g^0_{rn-kr} +\xi ^0_{e}$ due to Theorem \ref{mainthm}. The proof is very similar to Corollary \ref{corgonal}.
\end{Remark}

Now, we consider  a minimal presentation problem for  nonspecial line bundles  on   $X$  with a simple morphism $\phi : X \to Y$ for a  smooth plane curve $Y$, since it is possible to use some theories on linear systems on  smooth plane curves. Here, $\phi$ is said to be simple if it does not factor through. In $\S 4$,  the  result on  this will be applied to investigate property $(N_p)$ of line bundles on such curves.

\begin{Theorem}[\cite{Na}, p.82]  Let $C$ be a smooth plane curve of degree $d$. Let $g^1_n$  be a linear system on C. And let $\P _k $ be the projective space parameterizing effective divisors of degree $ k$ on $\P ^2$. If $g^1_n =\P _. C - F(\P _. C)$ for  a pencil $\P$ of  $\P _k $, then $n\ge k(d-k)$, where
$F(\P _. C):= \cap \{  E | E \in \P _. C \}. $
\end{Theorem}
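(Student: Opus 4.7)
The plan is to convert the inequality $n \ge k(d-k)$ into a B\'ezout-type bound on the base locus $F := F(\P_\cdot C)$ of the induced pencil on $C$. Since $C$ is smooth and irreducible, the pencil $\P$ defines a rational map $\phi : C \dashrightarrow \P^1$ which extends to a morphism of degree $n$, and for every $E \in \P$ the intersection divisor $E \cdot C$ on $C$ decomposes as $\phi^{-1}(\lambda_E) + F$, where $\lambda_E \in \P^1$ is the point corresponding to $E$. Because $\deg(E \cdot C) = kd$ by B\'ezout in $\P^2$, one obtains $n = kd - \deg F$, so the statement is equivalent to the bound $\deg F \le k^2$.

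To establish this bound I assume $\P$ has no fixed component (the intended setting of the theorem, and indeed the only case in which the claimed inequality can hold), so that two general members $E_1, E_2 \in \P$ share no common component in $\P^2$. By B\'ezout their scheme-theoretic intersection $E_1 \cap E_2 \subset \P^2$ is zero-dimensional of total degree $k^2$, and the support of $F$ is contained in $(E_1 \cap E_2) \cap C$. The key local comparison I would prove is
\[
\mathrm{mult}_P(F) \le (E_1 \cdot E_2)_P \qquad \text{for every } P \in C,
\]
from which $\deg F \le \sum_{P \in C}(E_1 \cdot E_2)_P \le k^2$ follows immediately by summation.

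The local comparison is the core technical point, and it exploits the smoothness of $C$. Let $t$ be a local parameter of $\mathcal O_{C,P}$ and let $f_1, f_2 \in \mathcal O_{\P^2,P}$ be local equations of $E_1, E_2$; writing $\bar f_i := f_i|_C = t^{a_i} u_i$ with $u_i$ a unit, a generic linear combination $\lambda_1 f_1 + \lambda_2 f_2 \in \P$ restricts to $C$ with $t$-order exactly $\min(a_1, a_2)$, so $\mathrm{mult}_P(F) \le \min(a_1, a_2)$. The natural surjection
\[
\mathcal O_{\P^2,P}/(f_1,f_2) \twoheadrightarrow \mathcal O_{C,P}/(\bar f_1, \bar f_2) = \mathcal O_{C,P}/(t^{\min(a_1,a_2)})
\]
then yields $(E_1 \cdot E_2)_P = \dim \mathcal O_{\P^2,P}/(f_1,f_2) \ge \min(a_1,a_2)$, proving the comparison.

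The main obstacle I foresee is executing this local comparison cleanly and justifying the reduction to the fixed-component-free case; everything else (the passage from $\P$ to the morphism $\phi$, the identification $n = kd - \deg F$, and the global B\'ezout count in $\P^2$) is standard. In particular, once the local inequality is in hand the bound on $\deg F$ is a one-line summation, so the whole argument is essentially B\'ezout in $\P^2$ filtered through the smoothness of $C$.
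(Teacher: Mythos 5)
The paper gives no proof of this statement: it is quoted from Namba \cite{Na} and used as a black box to derive Lemma \ref{lempl}, so there is no in-paper argument to compare yours against. On its own merits your proof is correct and is the standard B\'ezout argument. Two distinct members $E_1,E_2$ of a pencil with no fixed component share no common component (a common factor of $f_1,f_2$ would divide every $\lambda f_1+\mu f_2$), so B\'ezout gives $\sum_P (E_1\cdot E_2)_P=k^2$; your local surjection $\mathcal O_{\P^2,P}/(f_1,f_2)\twoheadrightarrow \mathcal O_{C,P}/(t^{\min(a_1,a_2)})$ correctly yields $\mathrm{mult}_P(F)\le (E_1\cdot E_2)_P$ (in fact $\mathrm{mult}_P(F)\le\min(a_1,a_2)$ already follows from $F\le E_i\cdot C$ for $i=1,2$, with no appeal to a generic member), and summation gives $\deg F\le k^2$, i.e.\ $n=kd-\deg F\ge k(d-k)$. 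Your caveat about fixed components is substantive rather than cosmetic: if the pencil has a fixed curve of positive degree the inequality can genuinely fail --- for instance a pencil of conics consisting of a fixed line plus a pencil of lines through a point off a smooth quintic cuts out a $g^1_5$, while $k(d-k)=6$ --- so the hypothesis that $\P$ has no fixed component must be read into the statement as transcribed. This is harmless for the paper's application in Lemma \ref{lempl}, where one may always discard the fixed part and work with the moving pencil of minimal degree.
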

This theorem gives the following lemma.
\begin{Lemma}   Let $C$ be a smooth plane curve of degree $d$.  If  $\mathcal D$ is a base point free complete linear system  on $C$ with $\dim  \mathcal D \ge 1$ and  $ \deg \mathcal D \le 2d-5$, then $\mathcal D$ is equal to either $g^1_{d-1}$ or $g^2_d$. \label{lempl}
\end{Lemma}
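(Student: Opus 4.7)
The plan is to apply Namba's preceding theorem to every base-point-free pencil sitting inside $\mathcal{D}$, and to exploit the numerical gap $k(d-k) \geq 2(d-2) = 2d-4 > 2d-5$ valid for all $2 \leq k \leq d-2$.  I would set $r := \dim \mathcal{D} \geq 1$ and $n := \deg \mathcal{D} \leq 2d-5$ and observe that, for any pencil $\mathfrak{g} \subseteq \mathcal{D}$ with base locus $B \geq 0$, the movable part $\mathfrak{g}' = \mathfrak{g} - B$ is a base-point-free $g^1_{n-\deg B}$.  The preceding theorem then yields $n - \deg B \geq k(d-k)$ for some $k$; the bound $n - \deg B \leq 2d-5$ forces $k \in \{1, d-1\}$, and the case $k = d-1$ reduces to $k = 1$ by a residuation argument using $K_C = (d-3)H|_C$.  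Hence $\mathfrak{g}'$ is cut out on $C$ by a pencil of lines in $\P^2$ through some point $P$, with $\deg \mathfrak{g}' = d-1$ when $P \in C$ and $\deg \mathfrak{g}' = d$ when $P \notin C$.

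I would then treat the case $r = 1$ first.  Here $\mathcal{D}$ itself is a complete, base-point-free pencil, hence $B = 0$ and $n \in \{d-1, d\}$.  If $n = d$, the pencil is cut out by lines through some $P \notin C$, so every divisor of $\mathcal{D}$ is a hyperplane section of $C$; this places $\mathcal{D}$ as a proper sub-pencil of the complete linear system $|H|_C = g^2_d$ of dimension $h^0(\mathcal{O}_C(1)) - 1 = 2$, contradicting the completeness of $\mathcal{D}$.  Therefore $n = d-1$ and $\mathcal{D} = g^1_{d-1}$.

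For the case $r \geq 2$, I would pick two generic distinct pencils $\mathfrak{g}_1, \mathfrak{g}_2 \subseteq \mathcal{D}$; by the preliminary analysis their movable parts are pencils of lines through points $P_1, P_2 \in \P^2$.  If $P_1 = P_2$ for every generic pair, every pencil in $\mathcal{D}$ lies in the single pencil of lines through $P_1$, forcing $\mathcal{D}$ to have a fixed component and $\dim \mathcal{D} \leq 1$, a contradiction.  So generically $P_1 \neq P_2$, and the two pencils of lines through $P_1$ and $P_2$ span the full $2$-dimensional linear system $|H|$ of lines on $\P^2$, which cuts out the complete system $g^2_d = |H|_C$ on $C$.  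The base loci $B_i$ of $\mathfrak{g}_i$ satisfy $\deg B_i \leq n - (d-1) \leq d-4$; a careful bookkeeping then shows these base loci must vanish, so $g^2_d \subseteq \mathcal{D}$, whence completeness together with a comparison of degrees and dimensions yields $n = d$ and $\mathcal{D} = g^2_d$.

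The main obstacle is the $r \geq 2$ case, specifically the step showing that the base loci of the two chosen pencils are trivial.  This requires tracking precisely how two pencils of lines through distinct points $P_1, P_2$ recombine inside the linear system $\mathcal{D}$, and using the sharp degree constraint $n \leq 2d-5$ to rule out non-trivial base loci; once this step is in hand, the remainder of the argument is a purely formal comparison of complete linear systems.
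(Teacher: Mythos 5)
Your overall strategy --- apply Namba's bound $n\ge k(d-k)$ to pencils inside $\mathcal D$ and exploit the numerical gap $k(d-k)\ge 2(d-2)=2d-4>2d-5$ for $2\le k\le d-2$ --- is exactly what the paper intends: the paper gives no argument beyond the sentence ``This theorem gives the following lemma,'' and your treatment of the case $\dim\mathcal D=1$ is correct. Still, two points in your write-up are genuine gaps. First, Namba's statement is conditional: it bounds the degree of a pencil \emph{already presented} as the moving part of a pencil of degree-$k$ plane curves, and the bound is vacuous when $k\ge d$. Before invoking it you must produce such a presentation with $1\le k\le d-1$. This is available: the restriction $H^0(\P^2,\co(k))\to H^0(C,\co_C(k))$ is surjective for every $k$, and for a divisor $D'$ of the moving part one has $h^0(K_C-D')\ge g+1-\deg D'>0$ when $\deg D'\le 2d-5$, so the minimal $k$ with $h^0(kH-D')\ge 1$ satisfies $k\le d-3$; with that in hand your inequality forces $k=1$ and the case $k=d-1$, which your ``residuation'' remark leaves unjustified, never arises.

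Second, and more seriously, the step you yourself flag as ``the main obstacle'' --- that in the case $r\ge 2$ the base loci vanish and $\mathcal D=g^2_d$ --- is asserted, not proved, so as written the proposal does not establish the lemma for $r\ge 2$. It can be closed as follows. The first part gives $D\sim H+B$ with $\deg B=n-d\le d-5$, or $D\sim H-P+B$ with $P\in C$ and $\deg B=n-d+1\le d-4$. Since $\co_{\P^2}(d-4)$ is $(d-4)$-very ample and $|K_C-H|=|(d-4)H|$ is cut out by plane curves of degree $d-4$, any effective $B'$ of degree $\le d-3$ imposes independent conditions on $|K_C-H|$, and Riemann--Roch yields $h^0(H+B')=3$, i.e.\ $|H+B'|=|H|+B'$. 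In the second alternative this gives $h^0(H-P+B)\le 2$ unless $P\le B$, contradicting $r\ge 2$ or reducing to the first alternative; in the first, $|D|=|H|+B$ has $B$ in its base locus, so base-point-freeness forces $B=0$ and $\mathcal D=g^2_d$. Some such independent-conditions argument is needed to finish; without it the proof is incomplete.
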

From Lemma \ref{lempl} we obtain  conditions for the minimality of  presentations of typical  nonspecial line bundles  such as $\ck _X -\phi ^* g^2_d +\xi_{e+2}$ and  $\ck _X -\phi ^* g^1_{d-1} +\xi_{e+1}$   on a multiple covering $X$ of  a  smooth plane curve $Y$ of degree $d$ via $\phi : X \to Y$.
\begin{Theorem} 
Let  a smooth curve $X$ of genus $g$ admit  a morphism   $\phi : X \to Y \subset  \P^2$ which does not factor through for a smooth plane curve $Y$ of degree $d \ge 5$ with $g> ng(Y) + n(n-1)d$ for   $n :=\deg \phi \ge 2$. Let $\delta _\epsilon :=\min \{ [\frac{g-ng(Y)}{n-1}] -nd+3, nd-5n+3 \} +\epsilon (n-1)$  in case $n\ge 2$; $\delta _\epsilon :=  nd-5n+3$ in case $n=1$;  $\epsilon = 0,1$.\\
(i) If we take  a  $\xi ^0 _{e}$ with $e\le \delta _0$ such that $\deg (\xi ^0 _{e},\phi ^* (H)) \le 2$ for any $H\in g^2_d$, then the line bundle $\cl \simeq \ck  _X  - \phi ^* (g^2_d) +\xi ^0 _{e}$ carries a natural minimal presentation $\cl \simeq \ck  _X  -g^0_{nd-2} +\xi ^0_{e-2}$, where $g^0_{nd-2}: =\phi ^* (g^2_d) - (P_1 +P_2 )$, $\xi ^0_{e-2}:= \xi ^0 _{e} -(P_1 +P_2 ) \ge 0$.\\
(ii) If we take  a  $\xi ^0 _{e}$ with $e\le \delta _1$ satisfying that  $\deg (\xi ^0 _{e},\phi ^* (H-{\tilde Q})) \le1$ for any $ H \in g^2_d$ with
 $ H\ge {\tilde Q}$ and $\deg (\xi ^0 _{e},\phi ^* (H)) \le n+1$ for any $H\in g^2_d$, then  $\cl \simeq \ck  _X  - \phi ^* (g^2_d (-{\tilde Q})) +\xi ^0 _{e}$ carries a natural minimal presentation $\cl \simeq \ck  _X  -g^0_{nd-n-1} +\xi ^0_{e-1}$, where $g^0_{nd-n-1}: =\phi ^* (g^2_d (-{\tilde Q})) - P$, $\xi ^0_{e-1}:= \xi ^0 _{e} -P \ge 0$, $P \in X$.\\
\label{planeth}
\end{Theorem}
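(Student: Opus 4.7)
The plan is to argue by contradiction, paralleling Theorem \ref{mainthm} but invoking Lemma \ref{lempl} to restrict the possible base-point-free linear systems on $Y$. I describe part $(i)$ in detail; part $(ii)$ is analogous, with $g^2_d(-\tilde Q)$ (which is a $g^1_{d-1}$) replacing $g^2_d$, the bound $\deg(\xi^0_e,\phi^*(H))\le n+1$ replacing the bound $\le 2$, and the extra summand $n-1$ in $\delta_1$ absorbing the passage from the pair $P_1+P_2$ to the single point $P$. As a preliminary, well-definedness of $\cl \simeq \ck_X - g^0_{nd-2} + \xi^0_{e-2}$ follows from the maximality of $P_1+P_2$: fixing $H_0 \in g^2_d$ with $(\xi^0_e,\phi^*(H_0)) = P_1+P_2$, coprimality $(g^0_{nd-2},\xi^0_{e-2})=0$ is forced, while $h^0(\co_X(g^0_{nd-2}))=1$ follows from $g > ng(Y)+n(n-1)d$ combined with Castelnuovo--Severi and Lemma \ref{lempl} (any pencil through $g^0_{nd-2}$ would compose with $\phi$ and produce a $g^1_{d-1}$ or $g^2_d$ on $Y$ contradicting the maximal choice of $H_0$).

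Now suppose for contradiction that $\cl \simeq \ck_X - g^0_{nd-2} + \xi^0_{e-2}$ is not minimal, so there exists another presentation $\cl \simeq \ck_X - h^0_t + \zeta^0_s$ with $s \le e-3$. Theorem \ref{lem1} gives $g^0_{nd-2}+\zeta^0_s \simeq h^0_t + \xi^0_{e-2}$ and $h^0(\co_X(g^0_{nd-2}+\zeta^0_s)) \ge 2$. Let $\mathcal{M}$ denote this linear system, with base locus $B$ and base-point-free moving part $\mathcal{M}'=\mathcal{M}-B$. The first bound in $\delta_0$ gives $\deg\mathcal{M} = nd-2+s \le [\frac{g-ng(Y)}{n-1}]-2 < \mu$, so by Castelnuovo--Severi $\mathcal{M}' = \phi^*(\mathcal{D})$ for a complete base-point-free linear system $\mathcal{D}$ on $Y$ with $\dim\mathcal{D}\ge 1$. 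The second bound in $\delta_0$ yields $n\deg\mathcal{D} \le \deg\mathcal{M} \le 2nd-5n$, hence $\deg\mathcal{D}\le 2d-5$, so by Lemma \ref{lempl}, $\mathcal{D}$ is either $g^1_{d-1}$ or $g^2_d$.

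To close the argument I would adapt the decomposition pattern of Theorem \ref{mainthm}: write $g^0_{nd-2} = \Lambda + B_1$, $\xi^0_{e-2} = \Sigma + B_2$, $h^0_t = \Lambda' + B'_1$, $\zeta^0_s = \Sigma' + B'_2$ with $\Lambda+\Sigma' = \phi^*(H)$ and $\Lambda'+\Sigma = \phi^*(H')$ for some $H,H'\in\mathcal{D}$, and $B_1+B'_2=B'_1+B_2=B$. Coprimality of the presentations forces $B_1=B'_1$ and $B_2=B'_2$ as divisors. Combining this with the explicit form $g^0_{nd-2}=\phi^*(H_0)-(P_1+P_2)$, one then exhibits a divisor $\bar H \in g^2_d$ (either $H'$ itself when $\mathcal{D}=g^2_d$, or $H'+Q_0$ using the realization $g^1_{d-1}=g^2_d(-Q_0)$ of projection pencils when $\mathcal{D}=g^1_{d-1}$) such that $\phi^*(\bar H)\ge \Sigma$, giving $(\xi^0_e,\phi^*(\bar H))\ge \deg\Sigma$, and one argues that $\deg\Sigma \ge 3$ forces a contradiction with the maximality hypothesis $\deg(\xi^0_e,\phi^*(H))\le 2$.

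The hard part will be this last extraction: one must carefully bound $\deg B_2$ in each of the two cases ($\mathcal{D}=g^2_d$ gives $\deg B = s-2$, whereas $\mathcal{D}=g^1_{d-1}$ gives $\deg B = n+s-2$, the latter consuming more of the base locus) and show that the moving part $\Sigma$ still retains enough of $\xi^0_e$ to force the excess intersection $\ge 3$. This is precisely where the two bounds composing $\delta_0$ are used simultaneously---the first to place $\mathcal{M}'$ inside the ``composed with $\phi$'' regime and the second to trigger Lemma \ref{lempl}---and where the choice of adding the base point $Q_0$ of the projection pencil is essential in the $g^1_{d-1}$ case. Part $(ii)$ then follows the same template with $P$ in place of $P_1+P_2$, using the paired conditions $\deg(\xi^0_e,\phi^*(H-\tilde Q))\le 1$ and $\deg(\xi^0_e,\phi^*(H))\le n+1$ to produce the analogous excess intersection under the shifted threshold $\delta_1$.
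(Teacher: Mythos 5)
Your overall strategy (contradiction via Theorem \ref{lem1}, then Castelnuovo--Severi to force the moving part to be composed with $\phi$, then Lemma \ref{lempl} to pin it down to $g^1_{d-1}$ or $g^2_d$) is the right family of ideas, and your degree bookkeeping for the two constants in $\delta_\epsilon$ is correct. But there is a genuine gap at exactly the point you flag as ``the hard part'': you analyze the linear system $|g^0_{nd-2}+\zeta^0_s|$ attached to the \emph{reduced} presentation, and this system is only guaranteed to have dimension $\ge 1$, so the case $\mathcal D=g^1_{d-1}$ survives and your extraction fails there. Concretely, if the moving part is $\phi^*(g^1_{d-1})$, the base locus has degree $\deg B=(nd-2+s)-n(d-1)=n+s-2$; since the part of $B$ sitting inside $g^0_{nd-2}$ need only have degree $n-2$, the part $B_2$ inside $\xi^0_{e-2}$ can have degree up to $s$, so $\deg\Sigma\ge e-2-s\ge 1$ is all you get (and the cruder bound $\deg B_2\le n+s-2$ gives only $\deg\Sigma\ge 3-n$). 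Neither reaches the threshold $3$ needed to contradict $\deg(\xi^0_e,\phi^*(\bar H))\le 2$, and adding the projection point $Q_0$ back in does not help because neither $P_1+P_2$ nor the fibre $\phi^*(Q_0)$ is controlled by the hypothesis. The same defect recurs in your sketch of (ii): with only the reduced presentation in hand, the pencil $g^1_{d-1}$ you land on could be $g^2_d(-Q_0)$ for a point $Q_0\neq\tilde Q$, and then the excess intersection $\deg(\xi^0_e,\phi^*(\bar H-Q_0))\ge 2$ contradicts neither of the two hypotheses, which only constrain intersections with $\phi^*(H-\tilde Q)$ and with full line sections $\phi^*(H)$.

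The missing idea is to run the comparison against the \emph{unreduced} equivalence. The paper proves (i) by invoking Theorem \ref{submin}: one only has to check $\dim|\phi^*(g^2_d)+G|=2$ for $\deg G\le e-3$, and here the system visibly has dimension $\ge 2$, so after Castelnuovo--Severi and Lemma \ref{lempl} the moving part \emph{must} be $\phi^*(g^2_d)$ (the $g^1_{d-1}$ option is excluded purely by dimension); Theorem \ref{submin} then yields $\zeta^0_s\le\xi^0_e$ and $\xi^0_e-\zeta^0_s\le\phi^*(H')$, whence $\deg(\xi^0_e,\phi^*(H'))\ge e-s\ge 3$. Likewise in (ii) the paper works with $|h^0_t+\xi^0_e|=|\phi^*(g^2_d(-\tilde Q))+\zeta^0_s|$, which contains $\phi^*(g^2_d(-\tilde Q))$ as a subsystem; this is what forces the pencil in the $g^1_{d-1}$ case to be $g^2_d(-\tilde Q)$ for the \emph{same} $\tilde Q$, after which the two hypotheses each kill one of the two cases. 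So your proposal needs to be restructured around $|h^0_t+\xi^0_e|$ rather than $|h^0_t+\xi^0_{e-2}|$; as written, the $g^1_{d-1}$ branch is not closed and the numerics show it cannot be closed by the estimates you propose.
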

\begin{proof} First, we verify the theorem in the case $n\ge 2$, since the theorem for $n=1$ can be shown easily through  a  similar proof. \\
(i) Note that we get $\beta =2$ due to  $\deg (\xi ^0 _{e},\phi ^* (H)) \le 2$ for any $H\in g^2_d$, where $\beta := \mbox{max} \{ \deg (\xi ^0 _e , D) | D \in g^2_d \}$.  By Theorem \ref{submin}, it suffices to show that $\dim |\phi ^* (g^2_d) +G| =2$ for any  $G\ge 0$ with $\deg G \le  e-3$.  Since $\deg (\phi ^* (g^2_d) +G) \le nd +e-3 \le \min \{ [\frac{g-ng(Y)}{n-1}]  ,n(2d-5) \}$,  the Castelnuovo-Severi inequality  implies that 
$ |\phi ^* (g^2_d) +G|$ is composed with $\phi$ and so Lemma \ref{lempl} gives $ |\phi ^* (g^2_d) +G| =\phi ^* (g^2_d) +G$, that is, $\dim  |\phi ^* (g^2_d) +G| =2$. This completes the proof of (i).

\noindent{(ii)} According to the condition that $\deg (\xi ^0 _{e},\phi ^* (H-{\tilde Q})) \le1$ for any $ H \in g^2_d$ with
 $ H\ge {\tilde Q}$, the number $\beta : = \mbox{max} \{ \deg (\xi ^0 _e , D) | D \in \phi ^* (g^2_d (-{\tilde Q})) \}$  is equal to one and thus $\cl$ admits a presentation $\cl \simeq \ck  _X  -g^0_{nd-n-1} +\xi ^0_{e-1}$, where $g^0_{nd-n-1}: =\phi ^* (g^2_d (-{\tilde Q})) - P$, $\xi ^0_{e-1}:= \xi ^0 _{e} -P \ge 0$, $P\in X$. Assume that there is another presentation $$\cl \simeq \ck  _X  - \phi ^* (g^2_d (-{\tilde Q})) +\xi ^0 _{e}\simeq \ck  _X  -h^0_t +\zeta ^0_{s}$$ with $s\le e-2$ which also means $t \le nd-n-2$.  This yields that 
$$ |\phi ^* (g^2_d (-{\tilde Q})) +\zeta ^0_{s}| =|h^0_t +\xi ^0 _{e}| =: g^\alpha _{nd-n +s}.$$
Note that  $e\le \delta _1$ gives that $nd-n+s \le nd -n +e-2\le \min \{ [\frac{g-ng(Y)}{n-1}], n(2d-5) \}$. Accordingly,  by the Castelnuvo-Severi inequality the linear system $g^\alpha _{nd-n +s}$ is composed with $\phi$, whence by  Lemma \ref{lempl} we conclude 
$$g^\alpha _{nd-n +s}= \phi ^* g^2_d+B_{s-n \ge 0} ~\mbox{ or } ~\phi ^* g^2_d (-{\tilde Q} ) +B_{s},$$ $B_{s-n},$ $B_{s}$: base loci. Here, we trivially obtain  that $B_{s-n} \le \zeta ^0_{s}$ and  $B_{s}= \zeta ^0_{s}$. 
Assume that $g^\alpha _{nd-n +s}= \phi ^* g^2_d+B_{s-n \ge 0}$. Then there is a $H \in g^2_d$ such that $\deg (\xi ^0_e, \phi ^* H) \ge nd -t \ge n+2$ due to $ |h^0_t +\xi ^0 _{e}| =\phi ^* g^2_d +B_{s-n}$ and  $t \le nd-n-2$. This is absurd since  $\deg (\xi ^0 _{e},\phi ^* (H)) \le n+1$ for any $H\in g^2_d$.  It forces that 
$$g^\alpha _{nd-n +s}=\phi ^* g^2_d (-{\tilde Q} ) +B_{s}.$$
This means that  $\deg (\xi ^0_e, \phi ^* (H-{\tilde Q})) \ge nd -n-t \ge 2$ for some $ H \in g^2_d$ with ${\tilde Q} \le H$, since $h^0 _t +\xi ^0_e \in \phi ^* g^2_d (-{\tilde Q} ) +B_{s}.$
Accordingly,  we   also meet a contradiction to $\deg (\xi ^0 _{e},\phi ^* (H-{\tilde Q})) \le 1$ for any $ H \in g^2_d$ with  $ H\ge {\tilde Q}$. As a result, the       presentation $\cl \simeq \ck  _X  -g^0_{nd-n-1} +\xi ^0_{e-1}$ is minimal.
This completes the proof of the theorem for $n\ge 2$. 

Next, consider the case of $n =1$ which means the biregularity of $\phi$ since $Y$ is nonsingular. Thus any linear system on $X$ is composed with $\phi$ and hence we can verify the theorem by  using Lemma \ref{lempl} and substituting $n=1$ in the proof of the case $n\ge 2$. Finally, we obtain the result.
\end{proof}

Note that the Riemann-Hurwitz Formula implies  $g\ge ng(Y) -n +1$ and thus the hypothesis  $g> ng(Y) + n(n-1)d$ of  Theorem \ref{planeth} is not strong in case $d>n$.

\section{Applications to  Green-Lazarsfeld's conjecture on syzygies of curves  }

Consider  a very ample line bundle $\cl$ on a smooth curve $X$ and the  homogeneous coordinate ring  $S:=Sym(H^0 (X, \cl))$  of $\varphi _{\cl} (X)$ in $\P^r:= \P H^0 (X, \cl ) $.
Then we have  a minimal free resolution of $S(X)$ as a graded $S$-module as follows:
$$0 \to E_{r-1} \to \cdots \to E_{p}\to
 E_{p-1}\to \cdots  \to E_1  \to S \to S(X) \to 0.$$
M. Green and R. Lazarsfeld have defined  property $(N_p)$ for $\cl$, which means  $E_0 =S$ and $E_i = \bigoplus^{{\beta}_{i,1}} S(-i-1)$ for all $1\le i  \le  p$(\cite{GL}, Section 3).  In their paper, they demonstrated that  property $(N_p)$  is closely related to the  Clifford index \cli (X) of  $X$ which is  an important birational numerical invariant of a smooth curve.

 They verified in \cite{GL} that  property $(N_0)$(:normal generation) holds for any very ample line bundle  $\cl$ on $X$ with  $\deg \cl \ge 2g+1 -\cli (X) -2h^1 (X, \cl )$.  The exactness of this bound has shown in \cite{GL}, \cite{Ko}, \cite{CKK}: there are very ample line bundles with $\deg \cl = 2g -\cli (X) -2h^1 (X, \cl )$ which fail to be normally generated. In \cite{GL1}, they also proved that a line bundle $\cl$ of degree $2g+p$ on a nonhyperelliptic curve $X$ satisfies $(N_p)$ if and only if $\varphi _{\cl} (X)$ has no (p+2)-secant p-planes.
In this context,  M. Green and R. Lazarsfeld   raised in \cite{GL} the following conjecture:

\begin{Conjecture}[Green-Lazarsfeld's conjecture on  $(N_p)$]
Let  $\cl$ be a very ample line bundle on a smooth curve $X$ of genus $g$ with $\deg \cl\ge 2g+1+p-2h^1(X,\cl)-\cli(X)$.
 If  $\varphi _{\cl} (X)$ has no $ (p+2)$-secant $p$-planes  then property $(N_p)$  holds for $\cl$.  \label{conjG}
\end{Conjecture}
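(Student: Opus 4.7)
The plan is to recast property $(N_p)$ as a Koszul cohomology vanishing and then handle the special and nonspecial cases separately, taking the Green--Lazarsfeld theorem for $(N_0)$ as the base of an induction. First I would rephrase the geometric hypothesis ``$\varphi_{\cl}(X)$ has no $(p+2)$-secant $p$-planes'' as $Ova(\cl)\ge p+1$, i.e.\ $\cl$ is $(p+1)$-very ample, and translate $(N_p)$ into the statement $K_{p,2}(X,\cl)=0$; Green's duality then identifies this with a dual Koszul group twisted by the residual $\ck_X\otimes\cl^{-1}$, which is the natural object to analyse via the presentation framework of Section~2.

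Second, I would split according to whether $\cl$ is special or nonspecial. If $\cl$ is special, the residual line bundle is effective and the Clifford index controls the setup: as the introduction already records, Aprodu's theorem together with Theorem~1 of \cite{CKKw} reduces the statement to Green's conjecture for the canonical curve, which is known in the general gonality range. If $\cl$ is nonspecial, I would invoke the paper's minimal presentation framework to write $\cl\simeq\ck_X-g^0_d+E$ with $\deg E\ge p+3$ minimal (Proposition \ref{thm0}\,(iv)), so that the linear position of $\varphi_{\cl}(E)$ and the geometry of any prospective $(p+2)$-secant $p$-plane are explicitly parametrised by the divisors $g^0_d$ and $E$. I would then attempt an induction on $p$ using Green's $H_0$-lemma and a projection from a general point of $X$, bootstrapping from the known case $(N_0)$ and checking that the degree bound $\deg\cl\ge 2g+1+p-2h^1(X,\cl)-\cli(X)$ survives the drop incurred by projection.

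The hard part---and in fact the obstacle that to my knowledge has left this conjecture open---will be the boundary regime in which $\deg\cl$ is close to the lower bound $2g+1+p-2h^1(X,\cl)-\cli(X)$: the extremal line bundles constructed later in Section~4 on general $n$-gonal curves and on simple multiple coverings of smooth plane curves witness that as soon as the secant hypothesis is relaxed by even one degree, $(N_p)$ fails, so the $(p+1)$-very ampleness assumption has to be used with essentially no slack. Since the present paper proceeds in the opposite direction, producing those extremal examples via minimal presentations to confirm sharpness rather than proving the conjecture, I expect a complete proof would require a genuinely new Koszul vanishing input beyond the minimal-presentation calculus of Sections~2--3, and the best I could realistically offer here is the reduction above together with the cases already handled by \cite{GL}, \cite{Ap1}, and \cite{CKKw}.
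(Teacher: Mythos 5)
There is nothing in the paper to compare your attempt against: the statement is labelled as a \emph{Conjecture} (Green--Lazarsfeld's conjecture on $(N_p)$), and the paper never claims to prove it. On the contrary, the whole point of Section~4 is to construct extremal line bundles at degree $2g+p-2h^1(X,\cl)-\cli(X)$, i.e.\ one below the conjectured bound, showing the bound would be sharp if the conjecture were true. Your write-up is candid that it delivers only a reduction plus the previously known cases, so the verdict is simply that the statement remains unproved by both you and the paper.

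That said, two concrete defects in your plan are worth naming. First, the special-case reduction via Aprodu's theorem and Theorem~1 of \cite{CKKw} only covers \emph{general} $k$-gonal curves (this is exactly the scope of Remark~\ref{spgon}); the conjecture is asserted for all smooth curves, so this step does not dispose of the special case. Second, your proposed upward induction on $p$ by projecting from a point of $X$ runs against the only available lemma: Theorem~1 of \cite{CKKw} states that $(N_p)$ for $\cl$ implies $(N_{p-1})$ for $\cl(-Q)$, i.e.\ it \emph{descends}. To bootstrap from $(N_0)$ up to $(N_p)$ you would need the converse implication, which is not available and is false in general; indeed the paper exploits the correct (descending) direction contrapositively, showing $\cl$ fails $(N_p)$ by exhibiting a twist that fails $(N_0)$. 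So even granting the Koszul reformulation $K_{p,2}(X,\cl)=0$ and the translation of the secant hypothesis into $(p+1)$-very ampleness (both of which are standard and correct), the inductive engine you propose does not exist, and the core vanishing remains a genuinely open problem outside the ranges settled by \cite{GL}, \cite{GL1}, and \cite{Ap1}.
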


\begin{Remark} Any spececial very ample line bundles on a general $k$-gonal curve $X$ of genus $g$ with $3\le k< [\frac{g}{2}]+2$ satisfy  Green-Lazarsfeld's conjecture on  $(N_p)$ by Theorem 2  in \cite{Ap1} and Theorem 1 in \cite{CKKw}.  In fact, Theorem 2  in \cite{Ap1} implies that a general $k$-gonal curve $X$ with $3\le k< [\frac{g}{2}]+2$ satisfies   Green's Conjecture: $K_{p,1} (X, K_X)=0 \mbox{ if and only if ~}  p\ge g-\cli (X) -1$; and Theorem 1 in \cite{CKKw} says that  if a  very ample line bundle $\cl$ on $X$ satisfies property $(N_p)$ then $\cl(-Q)$ has property $(N_{p-1})$ for any $Q\in X$.
\label{spgon}
\end{Remark}

Now,  note that  minimal  presentations of nonspecial  line bundles  give not only information on  the existence of $ (p+2)$-secant $p$-planes but also the construction of nonspecial line bundles with/without a $ (p+2)$-secant $p$-plane.   Accordingly,  a minimal  presentation can be an effective tool to observe the exactness of    Conjecture \ref{conjG}.

\begin{Definition} Let $X$ be a non-hyperelliptic curve of genus $g$.
A very ample line bundle $\cl$ on  $X$ with $\deg \cl = 2g+p-2h^1(X,\cl)-\cli(X)$ is called {\it an extremal line bundle for
 Green-Lazarsfeld's conjecture on} $(N_p)$ if
$\cl$ does not satisfy property $(N_p)$ and  $\varphi _{\cl} (X)$ has no $ (p+2)$-secant $p$-planes.  Specifically, for $p=0$ it was already defined by  {\it an extremal line bundle} in \cite{GL}.
\end{Definition}

We will demonstrate that  general  $n$-gonal curves and some simple $n$-fold coverings of smooth plane curves carry  nonspecial extremal  line bundles for Green-Lazarsfeld's conjecture on $(N_p)$.
Furthermore,  the results also show how to construct extremal line bundles for Green-Lazarsfeld's conjecture on $(N_p)$ on such curves.

Before going to our main results of this section, we consider that any line bundle $\cl$ of degree $2g +p (= 2g+p-2h^1(X,\cl)-\cli(X))$ on a hyperelliptic curve $X$ does not satisfy  property $(N_p )$, whereas a line bundle $\cl$ of that degree on a nonhyperelliptic curve does not satisfy  property $(N_p )$ if and only if $\cl$ embeds $X$ with a $(p+2)$-secant $p$-plane(see \cite{GL1}, Theorem 2). The following proposition explicitly shows that property $(N_p )$ for the line bundle $\cl$ on hyperelliptic curves is regardless of the existence of such secant planes.

\begin{Proposition} Let $X$ be a   hyperelliptic curve of genus $g\ge 2$.  Choose two  divisors $g^0_{d\ge 1}$ and $\xi ^0_{d+p+2}$ on $X$ with  $(g^0_d,  \xi ^0_{d+p+2} )=0$ for $p\ge 0$, $p+2d \le g-1$. Then   the nonspecial line bundle $\cl \simeq \ck _X -g^0_d +\xi ^0 _{d+p+2}$ embeds $X$ without a $(p+d+1)$-secant $(p+d-1)$-plane  and  does not satisfy  property  $(N_p)$.
\label{rmkhyp}
\end{Proposition}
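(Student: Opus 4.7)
The plan is to establish the two conclusions separately. For the secant statement, I will first show that the presentation $\cl \simeq \ck_X - g^0_d + \xi^0_{d+p+2}$ is minimal; Proposition \ref{thm0}, (iv) then gives $Ova(\cl) = (d+p+2) - 2 = d+p \geq 1$, so $\cl$ is $(d+p)$-very ample --- in particular very ample --- and its image $\varphi_{\cl}(X)$ has no $(p+d+1)$-secant $(p+d-1)$-plane. The failure of property $(N_p)$ is then immediate from the observation recalled just before the proposition: every very ample line bundle of degree $2g+p$ on a hyperelliptic curve fails $(N_p)$, the underlying reason being that $\varphi_{\cl}(X)$ always lies on a two-dimensional rational normal scroll whose defining quadrics obstruct the required linear strands.

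To prove minimality, I argue by contradiction. Suppose $\cl$ admits a second presentation $\cl \simeq \ck_X - g^0_t + \xi^0_s$ with $s \leq d+p+1$. Equating $\deg \cl = 2g+p$ in the two presentations gives $s = t+p+2$, so $t \leq d-1$. Theorem \ref{lem1} yields $g^0_d + \xi^0_s \simeq g^0_t + \xi^0_{d+p+2}$ as distinct effective divisors, and in particular $h^0(\co_X(g^0_d + \xi^0_s)) \geq 2$. The hypothesis $p + 2d \leq g-1$ forces $\deg(g^0_d + \xi^0_s) = d + s \leq 2d+p+1 \leq g$, so this system is special. The classical description of special linear systems on a hyperelliptic curve $X$ then gives $|g^0_d + \xi^0_s| = \alpha g^1_2 + B$ for some integer $\alpha \geq 1$ and some effective base locus $B$. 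Accordingly write $g^0_d + \xi^0_s = T + B$ and $g^0_t + \xi^0_{d+p+2} = T' + B$ with $T, T' \in |\alpha g^1_2|$.

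The heart of the argument is a pointwise analysis using the hyperelliptic involution $\sigma$. Since $h^0(\co_X(g^0_d)) = h^0(\co_X(\xi^0_s)) = 1$, neither divisor can contain a full pair $P + \sigma(P)$; consequently every pair occurring in $T$ must split between $g^0_d$ and $\xi^0_s$, with one point in each, and an analogous statement holds for $T'$. Set $B_g := (B, g^0_d)$ and $B_\xi := B - B_g$; one checks $B_g \leq g^0_d$, $B_\xi \leq \xi^0_s$, and a degree count that tracks the pair-contribution and base-contribution separately yields $\deg B_g = d - \alpha$. Defining $B'_g, B'_\xi$ analogously from the second presentation gives $\deg B'_g = t - \alpha$. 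Now the disjointness hypotheses $(g^0_d, \xi^0_{d+p+2}) = 0 = (g^0_t, \xi^0_s)$ imply that $B_g$ and $B'_\xi$ have disjoint support, and so do $B_\xi$ and $B'_g$. A short case analysis at each point $R$, comparing $B(R) = B_g(R) + B_\xi(R) = B'_g(R) + B'_\xi(R)$ against these disjointness constraints, forces $B_g = B'_g$ as divisors. Comparing degrees then gives $d - \alpha = t - \alpha$, i.e., $d = t$, contradicting $t \leq d-1$. I expect the main delicacy to be the bookkeeping at Weierstrass points of the hyperelliptic map $\pi : X \to \P^1$ and at points where $g^0_d$ and $\xi^0_s$ share support, where multiplicities require careful attention; but in every such sub-case the disjointness conditions are seen to rule out the problematic overlap.
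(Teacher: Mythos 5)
Your argument is correct in outline and rests on the same two pillars as the paper's proof --- minimality of the presentation (hence $(d{+}p)$-very ampleness via Proposition \ref{thm0}, (iv)) and the hyperelliptic case of Theorem 2 of \cite{GL1} for the failure of $(N_p)$ --- but you prove minimality from scratch where the paper simply invokes Corollary \ref{coro5}: a hyperelliptic curve is a double cover $\phi:X\to\P^1$ with $\gamma=0$, the bound $d+e\le g-2\gamma+1$ is exactly $2d+p\le g-1$, and the fiber conditions $\deg(g^0_d,\phi^*(Q))\le 1$ and $\deg(\xi^0_{d+p+2},\phi^*(Q))\le 1$ hold automatically because $h^0=1$ forbids either divisor from containing a full divisor of the $g^1_2$. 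Your direct route --- Theorem \ref{lem1}, writing the special system $|g^0_d+\xi^0_s|=\alpha g^1_2+B$, splitting each fiber of the moving part between the two divisors, and matching base loci to force $d=t$ --- is precisely the proof of Theorem \ref{mainthm} specialized to $n=2$, $Y=\P^1$, so it buys self-containedness at the cost of redoing that bookkeeping. One repair is needed in the bookkeeping you flag: defining $B_g:=(B,g^0_d)$ as a gcd can overshoot the intended base-locus contribution of $g^0_d$ when the moving part $\Lambda\le g^0_d$ shares support with the part of $B$ coming from the second presentation's divisor $\xi^0_s$ (this is allowed, since only $(g^0_d,\xi^0_{d+p+2})$ and $(g^0_t,\xi^0_s)$ are assumed to vanish), and then $\deg B_g>d-\alpha$. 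The clean decomposition is the one used in Theorem \ref{mainthm}, namely $g^0_d=\Lambda+B_1$ with $B=B_1+B'_2=B'_1+B_2$; your disjointness case analysis then gives $B_1=B'_1$ exactly as intended, and the per-fiber count $\deg\Lambda=\deg\Lambda'=\alpha$ (each fiber contributing at most its multiplicity to either side, with total $2\alpha$) yields $d-\alpha=t-\alpha$, the desired contradiction with $t\le d-1$.
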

\begin{proof}  By  Theorem 2 in  \cite{GL1} the line bundle  $\cl$  does not satisfy the property  $(N_p)$.  Corollary \ref{coro5} implies that  $\cl \simeq \ck _X -g^0_d +\xi ^0 _{p+d+2}$ is a minimal presentation for $p+2d \le g-1$, and hence $\cl$ is very ample and    embeds  $X$ with no $(p+d+1)$-secant $(p+d-1)$-planes.
\end{proof}

The following theorem shows that a general $n$-gonal curve $X$ of genus $g$ with $3 \le n \le [\frac{g-2}{2}]$ carries   numerous  nonspecial extremal line bundles $\cl$ for Green-Lazarsfeld's conjecture on $(N_p)$, that is, (1) $\deg \cl=2g+p-\cli(X)$, (2) $\cl$ is  very ample and  does not satisfy  property $( N_p)$, (3)  $\varphi _{\cl} (X)$ has no $(p+2)$-secant $p$-planes.

\begin{Theorem} Let  $X$ be a general $n$-gonal curve of genus $g$ with $3 \le n \le [\frac{g-2}{2}]$. For $p \le \frac{g-1}{2} -n$, the curve $X$ carries    nonspecial extremal line bundles  for Green-Lazarsfeld's conjecture on $(N_p)$ which are given by $\cl \simeq \ck _X - g^1_n   +\xi ^0 _{ p+4}  $ for   some $\xi ^0 _{ p+4} $ with $\deg (\xi ^0_{p+4}, F) \le 1$ for any $F \in g^1_n$.
 \label{eggon}
\end{Theorem}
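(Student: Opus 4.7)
The plan is to verify the four properties that make $\cl$ an extremal bundle for the conjecture on $(N_p)$: the correct degree, very ampleness, absence of $(p+2)$-secant $p$-planes, and failure of $(N_p)$. Let $\phi:X\to\P^1$ be the gonality map. A general $n$-gonal curve with $3\le n\le[(g-2)/2]$ is nonhyperelliptic, carries a unique $g^1_n$, and has $\cli(X)=n-2$ realized by that pencil. Since $\deg(\xi^0_{p+4},F)\le 1$ for every $F\in g^1_n$, no fiber of $\phi$ contains $\xi^0_{p+4}$, so $h^0(g^1_n-\xi^0_{p+4})=0$ and hence $h^1(\cl)=0$. Direct computation yields $\deg\cl=2g-2-n+(p+4)=2g+p-(n-2)=2g+p-\cli(X)$, the extremal degree of the conjecture.

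To obtain very ampleness and the absence of $(p+2)$-secant $p$-planes in one stroke, I would rewrite the presentation minimally. Pick $P_0\in\sup(\xi^0_{p+4})$ and let $D:=\phi^*(\phi(P_0))\in g^1_n$ be the fiber through $P_0$; the hypothesis forces $D\cap\sup(\xi^0_{p+4})=\{P_0\}$. Setting $g^0_{n-1}:=D-P_0$ and $\xi^0_{p+3}:=\xi^0_{p+4}-P_0$ gives $(g^0_{n-1},\xi^0_{p+3})=0$ and a presentation $\cl\simeq\ck_X-g^0_{n-1}+\xi^0_{p+3}$. The hypothesis $p\le(g-1)/2-n$ is equivalent to $(n-1)+(p+3)\le(g+3)/2$, and the fiber bound $\deg(g^0_{n-1},\phi^*Q)+\deg(\xi^0_{p+3},\phi^*R)\le(n-1)+1=n$ holds for all $Q,R\in\P^1$ because $g^0_{n-1}$ is concentrated in the single fiber $D$ and $\xi^0_{p+3}$ meets every fiber in at most one point. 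Corollary~\ref{coro4} then gives minimality, and Proposition~\ref{thm0}(iv) produces $Ova(\cl)=(p+3)-2=p+1$; hence $\cl$ is $(p+1)$-very ample and $\varphi_\cl(X)$ has no $(p+2)$-secant $p$-planes.

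The main obstacle is the failure of $(N_p)$. My plan is to iterate the subtraction principle (Theorem~1 of \cite{CKKw} quoted in Remark~\ref{spgon}): if $\cl$ had $(N_p)$, then $\cl(-Q_1-\cdots-Q_p)$ would be normally generated for every choice of points. I choose $Q_1,\ldots,Q_p$ from the $p+4$ distinct points of $\sup(\xi^0_{p+4})$, producing $\cl':=\cl(-\sum Q_i)\simeq\ck_X-g^1_n+\xi^0_4$ where $\xi^0_4$ still meets each fiber of $\phi$ in at most one point. Rerunning the minimality argument of the previous paragraph shows $\cl'$ is very ample of degree $2g-\cli(X)$, the extremal degree for $(N_0)$. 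It remains to prove that $\cl'$ fails to be normally generated, i.e., that $H^0(\cl')\otimes H^0(\cl')\to H^0(\cl'^{\otimes 2})$ is not surjective. The strategy is to exploit the decomposition $\cl'+g^1_n\simeq\ck_X+\xi^0_4$: the base-point-free pencil trick applied to $g^1_n$ controls $h^0(\cl'-g^1_n)$ via $|2g^1_n|=g^2_{2n}$ on a general $n$-gonal curve, and a direct dimension count produces a quadric in $H^0(2\cl')$ outside the image of the multiplication. This is the extremal $(N_0)$-failure phenomenon documented in \cite{GL}, \cite{Ko}, \cite{CKK}; pinning it down for the specific bundle $\cl'$ is the most delicate step. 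Once established, the iteration of \cite{CKKw} propagates the failure of $(N_0)$ for $\cl'$ back to the failure of $(N_p)$ for $\cl$, completing the proof.
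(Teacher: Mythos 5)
Your computation of the degree, the passage to the minimal presentation $\cl\simeq\ck_X-g^0_{n-1}+\xi^0_{p+3}$ via Corollary \ref{coro4}, the conclusion $Ova(\cl)=p+1$ (hence no $(p+2)$-secant $p$-planes), and the reduction of the $(N_p)$-failure to the $(N_0)$-failure of $\cl'\simeq\ck_X-g^1_n+\xi^0_4$ by iterating Theorem 1 of \cite{CKKw} all coincide with the paper's argument. The gap is precisely the step you yourself flag as ``the most delicate'': you never prove that $\cl'$ fails to be normally generated. You only announce a strategy (base-point-free pencil trick on $g^1_n$, a dimension count producing a quadric outside the image of the multiplication map) and defer its execution. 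Since all the other properties of an extremal line bundle are comparatively formal consequences of the minimality machinery, this deferred step is the actual content of the theorem, and the proposal as written does not establish it.

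The route you sketch is also more roundabout than necessary: the minimal presentation you have already constructed hands you the failure directly. From $\cl'(-\xi^0_4)\simeq\ck_X-g^1_n$ one gets
$$h^0(X,\cl')-h^0(X,\cl'(-\xi^0_4))=(g-n+3)-(g-n+1)=2,$$
so the four points of $\varphi_{\cl'}(\xi^0_4)$ span only a line $\ell$ in $\P H^0(X,\cl')^*$. This is exactly the configuration of (2.1) Theorem in \cite{GL}: a quadric through $\varphi_{\cl'}(X)$ meets $\ell$ in at least four points and hence contains $\ell$, and the resulting count shows that $H^0(\cl')\otimes H^0(\cl')\to H^0(2\cl')$ cannot be surjective for a bundle of the extremal degree $2g-\cli(X)$. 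The paper closes the argument with this one observation (the divisor $\xi^0_3+P$ spans a line) plus the citation of \cite{GL}; substituting it for your unfinished pencil-trick computation would complete your proof.
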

\begin{proof}  
First, consider the case $p=0$. By Corollary \ref{coro4}, a line bundle $\cl \simeq \ck _X -g^1_n  + \xi ^0_4$ is minimally presented by 
$$\cl~ \simeq~ \ck_X-g^0_{n-1}+\xi^0_{3}, \   \ g^0_{n-1}:=g^1_n ( -P) , \
\xi ^0_{3} :=\xi ^0_{4} -P, \ P\le \xi ^0_{4},   $$ 
due to $\deg (\xi ^0_{4}, F) \le 1$ for any $F \in g^1_n$.
Thus $\cl$ is very ample. By the same arguments of  (2.1) Theorem in \cite{GL},  $\cl$ fails to be normally generated since $D:= \xi ^0_3 +P$ spans a line via the embedding $\varphi _{\cl}$.

Next,  consider $\cl \simeq \ck _X - g^1_n   +\xi ^0 _{ p+4}  $ with $0<p \le \frac{g-1}{2} -n$. Then  $\cl$ admits a well defined presentation
$$\cl~ \simeq~ \ck_X-g^0_{n-1}+\xi^0_{p+3}, \   \ g^0_{n-1}:=g^1_n ( -P) , \ 
\xi ^0_{p+3} :=\xi ^0_{p+4} -P,   \ P \le  \xi ^0_{p+4}$$ 
since we have $(g^0_{n-1} , \xi^0_{p+3})=0$ by the condition $\deg (\xi ^0_{p+4}, F) \le 1$ for any $F \in g^1_n$.  
Corollary \ref{coro4}
implies the minimality of the presentation, since  $p \le \frac{g-1}{2} -n$,  $\deg (\xi ^0_{p+4}, F) \le 1$ for any $F \in g^1_n$.  
Consequently,  $\cl$ is very ample and  embeds $X$ with no  $ (p+2)$-secant $p$-planes.
Note that we have $$\deg \cl =2g+p-\cli(X),$$ since the  Clifford index of a general $n$-gonal curve is equal to $n-2$(see \cite{B}, \cite{KeK}).

Suppose that $\cl$ satisfies   property $(N_p )$. 
According to  Theorem 1 in \cite{CKKw}(see Remark \ref{spgon}),  the line bundle
$\cm~ \simeq~ \ck_X-g^0_{n-1}+\xi^0_{3}$  is normally generated, which cannot occur.
Thus $\cl$ does not  satisfy   property $(N_p)$. As a consequence, $\cl$ is an extremal line bundle for
 Green-Lazarsfeld's conjecture on $(N_p)$. This completes the proof of the theorem.
\end{proof}

In addition, we also want to show  the existence of   a nonspecial extremal line bundle for Green-Lazarsfeld's conjecture on $(N_p)$ on a simple multiple covering of a smooth plane curve. 
  To do this,  we have to calculate the Clifford index of such curves.  Accordingly,  we examine the Clifford index of line bundles on multiple coverings. In the following, a line bundle $\cm$ on a multiple covering $\phi : X\to  Y$ is said to be composed with $\phi$ if $\varphi _{\cm}$ factors through $\phi$, equivalently, $\cm \simeq \phi ^* \cn$ and $h^0 (X, \cm) =h^0 (Y, \cn)$. 
\begin{Lemma}
Assume that a smooth curve $X$ of genus $g$  admits
  a simple  $n$-fold covering morphism $\phi :X \rightarrow Y$
 for a smooth curve $Y$ of genus $\gamma$ with $g >n\gamma$. Let
$\cm$ be a globally generated line bundle on $X$ with  $\deg \cm \le g-1$ and $h^0 (X, \cm ) \ge 2$. Then
$\cm$ is composed with the morphism $\phi$ ~if $$ \cli (\cm
)<\min \{ ~ \frac{g-n\gamma}{n-1} -3, ~ \frac{2(2n+\mu -3)}{(2n+\mu
-1)^2}g-1~\},$$ where
$\mu:=[\frac {2n(n-1)\gamma}{g-n\gamma}]$.
\label{Prop:prop2.9}
\end{Lemma}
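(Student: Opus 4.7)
The plan is a proof by contradiction built on the Castelnuovo--Severi inequality. Suppose $\cm$ is not composed with $\phi$, and set $r := h^0(X,\cm)-1 \ge 1$ and $d := \deg\cm$. Take the Stein factorization of $\varphi_\cm: X \to \P^r$, written as $\psi: X \to \tilde Z$ of degree $m \ge 1$ onto a smooth curve $\tilde Z$ of genus $\gamma'$, followed by a finite birational map to a nondegenerate image $Z \subset \P^r$. Pulling back $\co_Z(1)$ yields a line bundle $\cn$ on $\tilde Z$ with $\cm \simeq \psi^*\cn$, $\deg\cn = d/m$, and $h^0(\tilde Z,\cn) = r+1$. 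The assumption that $\cm$ is not composed with $\phi$ means exactly that $\psi$ does not factor through $\phi$; together with the simplicity of $\phi$, the Castelnuovo--Severi inequality gives
$$g \le n\gamma + m\gamma' + (n-1)(m-1). \qquad(\ast)$$

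The argument now splits on whether $\cn$ is nonspecial or special on $\tilde Z$. If $\cn$ is nonspecial, Riemann--Roch on $\tilde Z$ gives $\gamma' = d/m - r$; substituting into $(\ast)$ and using $d \le g-1$ yields a lower bound for $\cli(\cm) = d-2r$ that is affine in $m$ and $r$, and minimizing over admissible $m \ge 2$, $r \ge 1$ produces the first bound $\cli(\cm) \ge \tfrac{g-n\gamma}{n-1} - 3$. The borderline case $m = 1$ (where $\varphi_\cm$ is birational) makes $(\ast)$ trivial, but nonspeciality then forces $\cli(\cm) = 2g - d \ge g+1$, already exceeding both terms in the minimum.

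If $\cn$ is special, Clifford's theorem on $\tilde Z$ gives $d/m \ge 2r$, while the Castelnuovo genus bound for the nondegenerate curve $Z \subset \P^r$ of degree $d/m$ supplies an upper bound for $\gamma'$ in terms of $r$ and $d/m$. Feeding both into $(\ast)$ and optimizing the resulting inequality over $m \ge 2$ and $r \ge 1$ produces the second bound, with the integer $\mu = \bigl[\tfrac{2n(n-1)\gamma}{g-n\gamma}\bigr]$ arising as the index at which the arithmetic optimum is attained. The leftover special subcase $m=1$ is handled separately by combining $\cli(\cm) \ge \cli(X)$ with a standard lower bound for $\cli(X)$ on a simple $n$-fold covering of a curve of genus $\gamma$, which again contradicts the hypothesis.

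The main obstacle is the optimization in the special case: one must interlace the Castelnuovo genus bound for $Z \subset \P^r$ with $(\ast)$ and isolate the extremal configuration that produces the precise coefficient $\tfrac{2(2n+\mu-3)}{(2n+\mu-1)^2}$. Once both minima are in hand, the hypothesis $\cli(\cm) < \min\{\,\cdot\,,\,\cdot\,\}$ contradicts whichever of the two bounds applies to the particular pair $(m,r)$ produced by $\cm$, forcing $\cm$ to be composed with $\phi$.
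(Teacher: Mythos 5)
Your proposal sets out to reprove the statement from first principles, whereas the paper's own proof is essentially a citation: it invokes Lemma 5 of \cite{K1}, which gives the same conclusion under the stronger hypothesis that $\cli(\cm)$ is also less than $\frac{\deg \ck_X\otimes\cm^{-1}}{3}$, and then observes that this third term is redundant here because $\deg\cm\le g-1$ forces
$\frac{\deg \ck_X\otimes\cm^{-1}}{3}\ge\frac{g}{3}-1\ge\frac{2(2n+\mu-3)}{(2n+\mu-1)^2}g-1$,
the last inequality holding for every $n$ and $\mu$ since $(2n+\mu-1)^2\ge 6(2n+\mu-3)$ identically. So the entire content of the paper's argument is this one-line comparison. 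Your framework (Stein factorization of $\varphi_{\cm}$, Castelnuovo--Severi against $\phi$, Clifford and Castelnuovo bounds on the intermediate curve) is indeed the standard machinery behind results of this type, and plausibly behind Lemma 5 of \cite{K1} itself; but as written it is a program rather than a proof. The two decisive computations --- the optimization producing the constant $-3$ in $\frac{g-n\gamma}{n-1}-3$, and above all the derivation of the coefficient $\frac{2(2n+\mu-3)}{(2n+\mu-1)^2}$ together with the identification of $\mu=\bigl[\frac{2n(n-1)\gamma}{g-n\gamma}\bigr]$ as the extremal index --- are only announced, and without carrying them out one cannot verify that the claimed minima actually emerge from your inequality $(\ast)$.

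Beyond incompleteness, one step fails as stated: your treatment of the case where $\varphi_{\cm}$ is birational ($m=1$) and $\cm$ is special. You propose to conclude from $\cli(\cm)\ge\cli(X)$ together with ``a standard lower bound for $\cli(X)$ on a simple $n$-fold covering.'' But on such coverings $\cli(X)$ is typically computed by line bundles that \emph{are} composed with $\phi$, and it lies \emph{below} the minimum appearing in the statement: Proposition \ref{clif} of this very paper shows that for simple coverings of smooth plane curves $\cli(X)$ equals $nd-n-2$ (with $d$ the degree of the plane curve), and the Claim in its proof establishes exactly that this value is smaller than $\min\{\frac{g-n\gamma}{n-1}-3,\ \frac{2(2n+\mu-3)}{(2n+\mu-1)^2}g-1\}$. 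Hence $\cli(\cm)\ge\cli(X)$ is compatible with $\cli(\cm)$ being below the minimum and yields no contradiction; the birational case must instead be handled by applying Castelnuovo's genus bound (or an Accola-type count against $\phi$) to the image curve $\varphi_{\cm}(X)\subset\P^r$ itself. A smaller gap: Castelnuovo--Severi requires that $\phi\times\psi:X\to Y\times\tilde Z$ be birational onto its image, which is not literally the same as ``$\psi$ does not factor through $\phi$''; with $\phi$ simple this can be arranged, but it needs an argument rather than an assertion.
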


\begin{proof}
By Lemma 5 in  \cite{K1}, if we show
$$ \cli (\cm
)<\min \{ ~ \frac{g-n\gamma}{n-1} -3, ~ \frac{2(2n+\mu -3)}{(2n+\mu
-1)^2}g-1, ~\frac{\deg ~\ck _X \otimes \cm ^{-1}}{3}~\},$$
then $\cm$ is composed with the simple  $n$-fold covering morphism $\phi$.
The condition  $\deg \cm \le g-1$ gives
$$\frac{\deg ~\ck _X \otimes \cm ^{-1}}{3} \ge \frac{g}{3} -1 \ge \frac{2(2n+\mu -3)}{(2n+\mu -1)^2}g-1,$$
 whence the result follows.
\end{proof}

Applying  Lemma \ref{Prop:prop2.9} to a simple multiple covering $X$ of a smooth plane curve, we not only find  the Clifford index of $X$ but also  characterize the line bundles computing the Clifford index of $X$.

\begin{Proposition}
Assume that a smooth curve $X$ of genus $g$  admits a  simple $n$-fold covering morphism  $\phi : X \to Y$  for a smooth plane curve $Y$  of  degree $d$ with $d > \frac{4n^2 +2 }{3}$ and $g\ge ng(Y) +n(n-1)d +2n^2 (n-1)$. Let $\cm$ be a line bundle computing the Clifford index of $X$ with $\deg \cm \le g-1$. Then
\begin{eqnarray*}
\cm &\simeq &\phi ^* \ce (-Q),~\ Q \in Y ~\ \mbox{ in case }~ n\ge 3\\
\cm &\simeq &\phi ^* \ce ~ \mbox{ or } ~ \cl \simeq \phi ^* \ce (-Q), ~\ Q \in Y \ ~\mbox{  in case  }~ n=2,
\end{eqnarray*}
where $\ce := \co _Y (1)$.
Specifically, we have $\cli (X) = nd-n-2.$
\label{clif}\end{Proposition}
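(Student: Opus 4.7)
My strategy has three parts: produce a candidate line bundle realizing the value $nd-n-2$, use Lemma~\ref{Prop:prop2.9} to force any line bundle computing $\cli(X)$ to descend to $Y$ via $\phi$, and finally invoke Lemma~\ref{lempl} on the plane curve $Y$ to classify the descent.

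For the upper bound I would take, for any $Q\in Y$, the line bundle $\phi^{*}\mathcal{E}(-Q)$: it has degree $n(d-1)$ and $h^{0}\ge h^{0}(Y,\mathcal{E}(-Q))=2$, hence Clifford index at most $nd-n-2$. When $n=2$, $\phi^{*}\mathcal{E}$ also attains this value since $2d-4=nd-n-2$. This gives $\cli(X)\le nd-n-2$.

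Now let $\mathcal{M}$ compute $\cli(X)$ with $\deg\mathcal{M}\le g-1$. Subtracting the base locus of $\mathcal{M}$ preserves $h^{0}$, decreases the degree, and keeps $h^{1}\ge 2$, so I may assume $\mathcal{M}$ is globally generated. Since $\cli(\mathcal{M})\le nd-n-2$, I must verify the two numerical hypotheses of Lemma~\ref{Prop:prop2.9}. The first, $nd-n-2<(g-n\gamma)/(n-1)-3$, is equivalent to $g-n\gamma>n(n-1)d-(n-1)^{2}$, which is immediate from the hypothesis $g\ge n\gamma+n(n-1)d+2n^{2}(n-1)$. The same hypothesis gives $g-n\gamma\ge n(n-1)(d+2n)$, hence $\mu\le 2\gamma/(d+2n)$; substituting this into the decreasing quantity $2(\alpha-2)/\alpha^{2}$ at $\alpha=2n+\mu-1$ reduces the second inequality to a polynomial estimate in $d$ and $n$ that the slack term $2n^{2}(n-1)$ is calibrated to dominate. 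Lemma~\ref{Prop:prop2.9} then yields $\mathcal{M}\simeq\phi^{*}\mathcal{N}$ with $h^{0}(X,\mathcal{M})=h^{0}(Y,\mathcal{N})$, and the global generation of $\mathcal{M}$ forces $\mathcal{N}$ to be base-point-free as well.

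For the descent, Clifford's inequality on $Y$ gives $\cli(\phi^{*}\mathcal{N})=n\deg\mathcal{N}-2(h^{0}(\mathcal{N})-1)\ge(n-1)\deg\mathcal{N}$, so $\deg\mathcal{N}\le(nd-n-2)/(n-1)$. For $n\ge 3$, the bound $d>(4n^{2}+2)/3$ makes this strictly less than $2d-5$; for $n=2$ the borderline value $2d-4$ is excluded because $Y$ of degree $d\ge 5$ is non-hyperelliptic and its canonical degree $2g(Y)-2=d(d-3)\ne 2d-4$, so Clifford's inequality on $Y$ is strict and a brief computation gives $\cli(\phi^{*}\mathcal{N})\ge 2d-2>2d-4$. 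Thus $\deg\mathcal{N}\le 2d-5$ in all cases, and Lemma~\ref{lempl} identifies the complete base-point-free pencil $|\mathcal{N}|$ with either $g^{1}_{d-1}=|\mathcal{E}(-Q)|$ or $g^{2}_{d}=|\mathcal{E}|$. Comparing Clifford values, $\cli(\phi^{*}\mathcal{E})=nd-4$ matches $nd-n-2$ only when $n=2$, producing exactly the dichotomy of the statement and forcing $\cli(X)=nd-n-2$. The main obstacle is the second inequality for Lemma~\ref{Prop:prop2.9}: since $\gamma\asymp d^{2}$, the bound on $\mu$ is of order $d$, so $2(2n+\mu-3)/(2n+\mu-1)^{2}\sim 1/d$, and the hypothesis on $g$ must be precisely strong enough to compensate for this deterioration.
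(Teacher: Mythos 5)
Your overall architecture is exactly the paper's: exhibit $\phi^*\ce(-Q)$ as a candidate giving $\cli(X)\le nd-n-2$, invoke Lemma~\ref{Prop:prop2.9} to force any Clifford-index-computing $\cm$ with $\deg\cm\le g-1$ to be composed with $\phi$, and then classify the descended bundle $\cn$ on the plane curve $Y$. Your step (c) is carried out slightly differently and, in my view, more carefully than in the paper: you bound $\deg\cn\le(nd-n-2)/(n-1)$ via Clifford's inequality on $Y$, dispose of the borderline $\deg\cn=2d-4$ when $n=2$ by the strict form of Clifford's theorem on the non-hyperelliptic curve $Y$, and then apply Lemma~\ref{lempl}; the paper instead quotes the fact that $\O_Y(1)$ is the unique line bundle computing $\cli(Y)=d-4$ in the relevant degree range. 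Both routes work, and yours has the merit of reducing everything to the already-stated Lemma~\ref{lempl}.

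The genuine gap is in the verification of the second numerical hypothesis of Lemma~\ref{Prop:prop2.9}, namely $nd-n-2<\frac{2(2n+\mu-3)}{(2n+\mu-1)^2}\,g-1$. You correctly identify this as the main obstacle and correctly observe that it is tight (with $\gamma=\tfrac{(d-1)(d-2)}{2}$ one has $\mu=O(d)$ and $\frac{2(2n+\mu-3)}{(2n+\mu-1)^2}g\sim nd$, so the leading terms on the two sides cancel), but you then assert without proof that the slack term $2n^2(n-1)$ "is calibrated to dominate." Precisely because the inequality is tight to leading order, this cannot be left as a one-line polynomial estimate: one must first sharpen your bound $\mu\le\frac{(d-1)(d-2)}{d+2n}$ to the integer bound $\mu\le d-2n-1$ — this is exactly where the hypothesis $d>\frac{4n^2+2}{3}$ enters, via $(d-1)(d-2)<d^2-4n^2$ — and then run a genuine case analysis. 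The paper needs three separate computations ($n=2$; $n\ge3$ with $\mu>0$; $n\ge3$ with $\mu=0$), each using the full strength of $g\ge n\gamma+n(n-1)d+2n^2(n-1)$ together with $\mu\le d-2n-1$, and in the case $n\ge3$, $\mu>0$ the final margin is only $\frac{nd}{(2n+\mu-1)^2}(2n+\mu-7)\ge0$. Until you supply these estimates, the descent step — and with it the whole classification — is not established. A secondary, more minor omission: before applying Clifford's inequality on $Y$ you should rule out that $\cn$ is nonspecial (if it were, $\cli(\phi^*\cn)=(n-2)\deg\cn+2\gamma$, which exceeds $nd-n-2$ under the stated hypotheses, so this is quick but should be said).
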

\begin{proof} Assume that $\cm =\phi ^* \cn$ on $X$ is composed with $\phi$. 
Since  $\cli (\cn ) >d-4$ for a line bundle $\cn$ on $Y$ with $\cn
\neq \ce $, we have $\cli (\cm )
> nd-4$  in case $\cn \neq \ce$ or $\ce(-Q)$. On the
one hand, $$\cli (\phi ^* \ce )= nd -4 ~ \mbox{ and }~ \cli (\phi
^*\ce(-Q)) =nd -n-2 $$ for $ Q\in Y $.
Thus Lemma \ref{Prop:prop2.9} implies the theorem  if we verify the following claim, where $\gamma :=g(Y)$ and $\mu:=[\frac {2n(n-1)\gamma}{g-n\gamma}]$.

{\bf Claim:} $ nd-n-2 <\min \{ ~ \frac{g-n\gamma }{n-1} -3, ~ \frac{2(2n+\mu -3)}{(2n+\mu
-1)^2}g-1~\}.$

{\bf Proof of Claim.} Since the condition $g\ge n\gamma  +n(n-1)d +2n^2 (n-1)$ gives $ nd-n-2 < \frac{g-n\gamma }{n-1} -3$, it  suffices to show that $ nd-n-2 < \frac{2(2n+\mu -3)}{(2n+\mu -1)^2}g-1$. To prove this, we   note the inequality
$$\mu \le d- 2n-1.$$
This  is also  given by   $g\ge n\gamma  +n(n-1)d +2n^2 (n-1)$  and $d > \frac{4n^2 +2 }{3}$ which imply   $$ 2\gamma \frac{n(n-1)}{ g-n\gamma}  -(d-2n) \le \frac{(d-1)(d-2)}{d+2n}  -(d-2n)\le  \frac{-3d +4n^2 +2}{d+2n} <0,$$
for  $\gamma =\frac{(d-1)(d-2)}{2}$.

Now, we will divide the proof  into the following three cases:

\vspace{0.2cm}
\centerline{(1) $n=2$, ~~\  \ (2) $n\ge 3$  and $\mu >0$, ~~\  \ (3) $n\ge 3$  and $\mu =0$.}

\vspace{0.2cm}
\noindent{(1)} Assume that $n=2$. In case  $\mu =0$, the inequality  $2d-4  <\frac{2(\mu +1)}{(\mu +3)^2}g-1$ trivially comes from  $g\ge n\gamma  +n(n-1)d +2n^2 (n-1)=d^2 -d+10$. Thus we assume $\mu \ge 1$.  According to the inequality $\mu \le d-2n-1=d-5$ and the condition $g\ge d^2 -d+10$,   we obtain
 \begin{eqnarray*}
& &\{\frac{2(\mu +1)}{(\mu +3)^2}g-1 \} -\{ 2d-4\}\\
& &\ge \frac{1}{(\mu +3)^2} \{ (\mu +1)(2d^2 -2d+20) -(2d-3)(\mu +3)^2 \} \\
& &=\frac{1}{(\mu +3)^2} \{ ( -(2d-3)\mu ^2 +(2d-3)(d-5)\mu ) -(d-23)\mu +2d^2 -20d+47\} \\
& &\ge \frac{1}{(\mu +3)^2} \{ -(d-23)\mu +2d^2 -20d+47\} \\
& &= \frac{1}{(\mu +3)^2} \{ -(d-23)\mu +d(d-5) +d^2 -15d+47 \}\\
& &\ge \frac{1}{(\mu +3)^2} \{ -(d-23)\mu +d\mu +d^2 -15d+47 \} >0.
\end{eqnarray*}
This proves the claim for $n=2.$

\noindent{(2)} Assume that $n\ge 3$  and $\mu >0$. Since  $g\ge n\gamma +n(n-1)d +2n^2 (n-1)$ and $2\gamma =(d-1)(d-2)$, we get
$$2g> n(d^2 -3d) +2n(n-1)d = nd(d+2n-5)\ge nd(d+1).$$
This gives
\begin{eqnarray*}
& &\{\frac{2(2n+\mu -3)}{(2n+\mu -1)^2}g-1 \} -\{ nd-n-2\}\\
& &> \frac{1}{(2n+\mu -1)^2}\{ (2n+\mu -3)nd(d + 1) -(nd-n-1) (2n +\mu -1)^2 \}\\
& &>\frac{nd}{(2n+\mu -1)^2}\{ (2n+\mu -3)(d+1) - (2n +\mu -1)^2 \}\\
& &\ge \frac{nd}{(2n+\mu -1)^2}\{ 2n +\mu -7\} \ge 0,
\end{eqnarray*}
for $\mu >0$, $\mu \le d-2n-1$. Thus  the claim  is verified in
case $n=3$  and $\mu >0$.

\noindent{(3)} Assume that $n\ge 3$  and $\mu =0$.  Then we have
\begin{eqnarray*}
& &\{\frac{2(2n+\mu -3)}{(2n+\mu -1)^2}g-1 \} -\{ nd-n-2\}\\
& &=\frac{2(2n -3)}{(2n -1)^2}g -nd +n +1> \frac{n(2n -3)}{(2n -1)^2} (d-1)(d-2)  -nd \\
& &\ge n \{ \frac{4n^2(2n -3)}{3(2n -1)^2} (d-2) -d\} \ge n \{ \frac{4}{3} (d-2) -d\}  >0,
\end{eqnarray*}
since $2g> 2n\gamma = n(d-1)(d-2)$, $d-1 \ge \frac{4n^2}{3}$,  and
$\frac{n^2(2n -3)}{(2n -1)^2} \ge 1 $ for $n\ge 3$. This proves
the claim  in case $n \ge 3$ and  $\mu =0$. Thus we complete the
proof of the theorem.
\end{proof}

From this we obtain the following theorem which   demonstrates   the sharpness of  Conjecture \ref{conjG} on  simple multiple coverings of  smooth plane curves. As mentioned in $\S$3, the condition $g\ge ng(Y) +2n(n-1)d$ in the theorem is not so strong  for $d \gg n$, since any multiple covering $X$ of a smooth curve $Y$  satisfies the inequality $g(X)\ge ng(Y) -n$ by the Riemann-Hurwitz Formula.
\begin{Theorem}
Let a smooth curve $X$ of genus $g$ be a simple $n$-fold   covering of a smooth plane curve $Y$  of degree $d$ with $d > \frac{4n^2 +2 }{3}$ and $g\ge ng(Y) +2n(n-1)d$. For any $p \le nd -4n -2$,  the curve $X$ carries  extremal line bundles  for Green-Lazarsfeld's conjecture on $(N_p)$ which are given by  $\cl \simeq \ck  _X  -\phi ^* (g^2_d (-\tilde Q)) +\xi ^0_{p+4}$ for some $\tilde Q \in Y$ and $\xi ^0_{p+4} \in X^{(p+4)}$ satisfying  that 
 $\deg (\phi ^* ( H-\tilde Q), \xi ^0_{p+4} )\le 1$  for any  $H \in g^2_d$ with $\tilde Q\le  H$ and $\deg (\phi ^* (H), \xi ^0_{p+4}) \le n+1$ for any $H \in g^2_d$.

\label{plNp}
\end{Theorem}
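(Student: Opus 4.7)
My plan is to verify extremality of $\cl$ by (i) matching its degree with $2g+p-\cli(X)$, (ii) extracting a minimal presentation via Theorem~\ref{planeth}(ii) to secure very ampleness and the absence of $(p+2)$-secant $p$-planes, and (iii) contradicting $(N_p)$ for $\cl$ by descent to a failure of $(N_0)$ on a degree-$(2g-\cli(X))$ sub-line-bundle. For~(i), a direct computation yields
\[
\deg\cl = (2g-2)-(nd-n)+(p+4) = 2g+p-(nd-n-2),
\]
and Proposition~\ref{clif} gives $\cli(X)=nd-n-2$, so $\deg\cl=2g+p-\cli(X)$; nonspeciality of $\cl$ forces $h^1(X,\cl)=0$, matching the extremal degree. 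For~(ii), I would apply Theorem~\ref{planeth}(ii) with $e=p+4$: the hypothesis $p\le nd-4n-2$ gives $p+4\le(nd-5n+3)+(n-1)$, while $g\ge ng(Y)+2n(n-1)d$ yields $[(g-ng(Y))/(n-1)]\ge 2nd\ge p+nd-n+2$, whence both bounds defining $\delta_1$ are met, so $e\le\delta_1$. Since the intersection hypotheses on $\xi^0_{p+4}$ are precisely those demanded in Theorem~\ref{planeth}(ii), the theorem produces the minimal presentation
\[
\cl \simeq \ck_X - g^0_{nd-n-1} + \xi^0_{p+3}, \quad g^0_{nd-n-1}=\phi^*(g^2_d(-\tilde Q))-P,\quad \xi^0_{p+3}=\xi^0_{p+4}-P,
\]
for some $P\le\xi^0_{p+4}$. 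As $\deg\xi^0_{p+3}=p+3\ge 3$, Proposition~\ref{thm0}(iv) gives $Ova(\cl)=p+1$, whence $\cl$ is very ample and $\varphi_\cl(X)$ admits no $q$-secant $(q-2)$-plane for any $q\le p+2$.

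For~(iii), I would argue by contradiction, echoing the structure used in the proof of Theorem~\ref{eggon}. Suppose $\cl$ satisfies $(N_p)$. Peeling off an effective subdivisor $D\le\xi^0_{p+4}$ of degree $p$ and iterating Theorem~1 of \cite{CKKw} $p$ times, the line bundle
\[
\cm := \cl(-D) \simeq \ck_X - \phi^*(g^2_d(-\tilde Q)) + \xi^0_{4}, \quad \xi^0_4 := \xi^0_{p+4}-D,
\]
of degree $2g-\cli(X)$ must then satisfy $(N_0)$. However, Riemann--Roch applied to this non-minimal presentation gives
\[
h^0(\cm) - h^0(\cm(-\xi^0_4)) \;=\; \deg\xi^0_4 - \bigl(h^0(\phi^*(g^2_d(-\tilde Q)))-1\bigr) \;=\; 4-2 \;=\; 2,
\]
so the four points of $\xi^0_4$ span only a line in $\varphi_\cm(X)$. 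Invoking the (2.1) Theorem of \cite{GL} --- the same failure-of-normal-generation argument used for $p=0$ in the proof of Theorem~\ref{eggon} --- this $4$-secant line obstructs normal generation of $\cm$, contradicting $(N_0)$ for $\cm$. Hence $\cl$ fails $(N_p)$ and is extremal for Green--Lazarsfeld's conjecture on $(N_p)$.

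The main obstacle I anticipate lies in step~(iii): I need to ensure that the intersection conditions on $\xi^0_{p+4}$ against $\phi^*(H)$ and $\phi^*(H-\tilde Q)$ survive the passage to the residual $\xi^0_4 = \xi^0_{p+4} - D$, so that $\cm$ genuinely admits $\ck_X - \phi^*(g^2_d(-\tilde Q)) + \xi^0_4$ as a nontrivial non-minimal presentation with $(\phi^*(g^2_d(-\tilde Q)),\xi^0_4)=0$, and the four points of $\xi^0_4$ really do lie on a line in $\varphi_\cm(X)$. Only then does the classical Green--Lazarsfeld argument apply verbatim to yield the required contradiction.
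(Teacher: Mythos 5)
Your proposal is correct and follows essentially the same route as the paper: degree matching via Proposition \ref{clif}, minimality (hence very ampleness and absence of $(p+2)$-secant $p$-planes) via Theorem \ref{planeth}(ii) after checking $p+4\le\delta_1$, and failure of $(N_p)$ by descending through Theorem 1 of \cite{CKKw} to a degree-$(2g-\cli(X))$ line bundle with four collinear points. Your worry at the end is unfounded, since the Riemann--Roch computation $h^0(\cm)-h^0(\cm(-\xi^0_4))=4-h^0(\phi^*(g^2_d(-\tilde Q)))=2$ needs only the nonspeciality of $\cm$ and $h^0(X,\phi^*(g^2_d(-\tilde Q)))=2$, not the survival of the intersection conditions; this is exactly how the paper argues.
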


 \begin{proof} 
Assume that  $\cl \simeq \ck  _X  -\phi ^* (g^2_d (-\tilde Q)) +\xi ^0_{p+4}$ for some $\tilde Q \in Y$ and $\xi ^0_{p+4} \in X^{(p+4)}$ satisfying the conditions of the theorem.
Note that   the hypotheses $d > \frac{4n^2 +2 }{3}$ and $g\ge ng(Y) +2n(n-1)d$  imply both $g\ge ng(Y) +n(n-1)d +2n^2 (n-1)$ and $[\frac{g-ng(Y)}{n-1}] -nd +3 \ge nd -5n+3$. Hence, by Proposition \ref{clif}  and Theorem \ref{planeth}, $\cli (X)$ is equal to $nd -n-2$ and 
 $\cl$ is minimally presented by $ \ck  _X -g^0 _{nd-n-1} +\xi ^0 _{p+3} $, where $g^0 _{nd-n-1} := \phi ^* (g^2_d (-\tilde Q)) -P$, $\xi ^0 _{p+3} := \xi ^0_{p+4}- P$, $P\le \xi ^0_{p+4}$. 
It means that  $\cl$ is a very ample line bundle with $\deg \cl = 2g+p-2h^1(X,\cl)-\cli(X)$ and  the curve $\varphi _{\cl} (X)$ has no $(p+2)$-secant $p$-planes. 

Assume that property $(N_p )$ holds for $\cl$.  Then the line bundle $\cm$ given by
$$\cm~ \simeq~ \ck_X-g^0_{nd-n-1}+\xi^0_{3}, \quad \xi ^0_{3} \le\xi ^0_{p+3} , $$  is normally generated by  Theorem 1 in \cite{CKKw}. 
However,  it cannot occur by the same reason in the proof of Theorem \ref{eggon},  since we have $h^0 (X, \cm)-h^0 (X, \cm (-(P +\xi ^0_{3})))=2 $  by $g^0 _{nd-n-1} := \phi ^* (g^2_d (-\tilde Q)) -P$.  Thus 
 $\cl$ does not satisfy  property $(N_p)$.  Consequently,  $\cl$ is an extremal line bundle for Green-Lazarsfeld's conjecture on $(N_p)$.
\end{proof}
\begin{Remark} 
Let $X$ admit a morphism $\phi : X \to Y \subset \P^2$ for a smooth plane 
curve $Y$ of degree $d$ with $n: =\deg \phi \le 2$. Assume that $0\le p \le nd-5n -3$ and $g \ge 2g(Y) +2d+8$ for $n=2$.  Then, $X$ carries  another type of  extremal line bundles for Green-Lazarsfeld's conjecture on $(N_p)$ as follows. 
Choose a  $\xi ^0_{p+6}$ on $X$ satisfying (i) the points of  $\xi ^0_{p+6}$ are distinct and  map to  distinct points of Y, (ii) there is a  $\xi ^0_{6 } \le \xi ^0_{p+6}$ such that  points of $\phi (\xi ^0_{6 })$   lie on a conic but has no four collinear.   Then, $\cl \simeq \ck _X -\phi^* g^2_d +\xi ^0_{p+6}$ is 
 an extremal line bundle  for Green-Lazarsfeld's conjecture on $(N_p)$. 
This can be shown by the same way as Theorem \ref{plNp};  (1) $\varphi _{\cl} (X)$ has no $(p+2)$-secant $p$-plane since  Theorem \ref{planeth} implies that 
$\cl$ is minimally presented by $\ck _X -g^0_{nd-2} + \xi ^0_{p+4}$ with $g^0_{nd-2}: =\phi ^* (g^2_d) - (P_1 +P_2 )$ and $\xi ^0_{p+4}:= \xi ^0 _{p+6} -(P_1 +P_2 ) \ge 0$, (2) $\cl$ does not satisfy property $(N_p)$ by  Theorem 1 in \cite{CKKw}  since      $\cl (-\xi ^0 _p ) \simeq \ck _X -\phi^* g^2_d +\xi ^0_{6}$ fails to be normally generated  as (2.5) Remark in \cite {GL}, where $\xi ^0 _p: =\xi ^0_{p+6} -\xi ^0_{6}$.
\end{Remark}
%

\pagestyle{myheadings} \markboth{}{}

\end{document}